\documentclass[12pt,oneside,english]{amsart}
\usepackage{amssymb,amsmath,latexsym,amsthm}

\usepackage[a4paper,top=3cm,bottom=3cm,left=3cm,right=3cm,marginparwidth=1.75cm]{geometry}
\usepackage{graphicx}

\usepackage{xcolor}

\newtheorem{theorem}{Theorem}[section]
\newtheorem{corollary}[theorem]{Corollary}
\newtheorem{lemma}[theorem]{Lemma}
\newtheorem{prop}[theorem]{Proposition}

\theoremstyle{definition}

\newcommand\blfootnote[1]{%
  \begingroup
  \renewcommand\thefootnote{}\footnote{#1}%
  \addtocounter{footnote}{-1}%
  \endgroup
}
\newcommand{\PSH}{{\rm PSH}}

\newcommand{\capa}{{\rm Cap}}

\numberwithin{equation}{section}

 \usepackage{hyperref}
\hypersetup{  
    pdftitle={Full mass classes},    
    pdfauthor={Mohammed Salouf},     
    colorlinks=true,       
   linkcolor=black,          
    citecolor=black,        
    filecolor=black,      
    urlcolor=black}

\subjclass[2010]{32W20, 32U05, 32Q15, 35A23}

\keywords{Monge-Amp\`ere equations, hermitian manifolds, weak solutions, convergence in capacity}

 \begin{document}

 \title[Degenerate complex Monge-Amp\`ere type equations]{Degenerate complex Monge-Amp\`ere  equations on some compact Hermitian manifolds}  
\author{Mohammed Salouf}

\address{ 
Department of Mathematics, 
Faculty of Sciences El Jadida, 
Chouaib Doukkali University.
24000 El Jadida. Morocco.} 
\email{salouf.m@ucd.ac.ma}

\date{\today}
   
\maketitle

\begin{abstract}
Let $X$ be a compact complex manifold which admits a hermitian metric satisfying a curvature condition introduced by Guan-Li. Given a semipositive form $\theta$ with positive volume,  we define the Monge-Amp\`ere operator for unbounded $\theta$-psh functions and prove that it is continuous with respect to convergence in capacity. We then develop pluripotential tools to study degenerate complex Monge-Amp\`ere equations in this context, extending recent results of Tosatti-Weinkove, Kolodziej-Nguyen, Guedj-Lu and many others who treat bounded solutions. 
\end{abstract}

\blfootnote{The author is supported by the CNRST within the framework of the Excellence Research Grants Program under grant number 18 UCD2022.}

\tableofcontents

\section{Introduction}

The complex Monge-Amp\`ere equation is a natural multidimensional generalizations of the Laplace equation. Starting from the seventies, Bedford and Taylor \cite{BT76} have laid down the foundation of pluripotential theory to study the Dirichlet problem in euclidean domains. The equation has also marked its appearance in the construction of canonical metrics on K\"ahler manifolds as shown in the seminal works of Calabi \cite{Cal57}, Aubin \cite{Aub78} and Yau \cite{Yau78}. As evidenced in the minimal model program in birational geometry, it is of major importance to study canonical metrics on singular varieties, see \cite{BCHM}, \cite{EGZ09}. Constructing these objects requires one to consider degenerate complex Monge-Amp\`ere equations and work with unbounded quasi-plurisubharmonic functions. Inspired by the pioneering works of Cegrell \cite{Ceg98,Ceg04}, Guedj and Zeriahi \cite{GZ07} have defined and studied non-pluripolar Monge-Amp\`ere measures of unbounded potentials. As seen in many important contributions from  \cite{BB17}, \cite{BBEGZ19}, \cite{BDL17,BDL20}, \cite{DR17}, \cite{ChCh21a,ChCh21b}, and many others, geometric pluripotential theory has occupied a central place in K\"ahler geometry during the last two decades.

However, carrying a K\"ahler metric is a restrictive condition while hermitian metrics can be easily constructed on any compact complex manifold by means of a partition of unity. Complex Monge-Amp\`ere equations on these manifolds were first appeared in the works of 
Cherrier \cite{Cher87} and Hanani \cite{Han96} who generalized Yau's theorem  under some curvature condition. After attempts from Guan and Li \cite{GL10}, Tosatti and Weinkove \cite{TW10} solved the equation for smooth solutions in full generality. Bounded weak solutions have been studied extensively over the last few years by many authors  \cite{DK12, KN15Phong, Ngu16, GL22, GL23Crelle, KN23CVPDE}. 

Compared to the K\"ahler case, the main difficulty in defining the Monge-Amp\`ere measure for unbounded potentials lies in the variation of the Monge-Amp\`ere volume.    
In the next part of the paper we will restrict ourselves to compact complex manifolds admitting a hermitian metric $\omega$ such that 
 \[
 dd^c \omega=0 \; \text{and}\; d\omega \wedge d^c \omega=0. 
 \]
 This curvature property was considered by Guan and Li in \cite{GL10} and we call it the Guan-Li condition. As shown by Chiose \cite{Chi16}, it is equivalent to the validity of the comparison principle. 
Note that any compact complex manifold admits a Gauduchon metric \cite{Gau77}, i.e. metrics $\omega$ such that $dd^c (\omega^{n-1})=0$. In particular, in dimension two Gauduchon metrics satisfy the Guan-Li condition by obvious dimensional reason.

Given a smooth real $(1,1)$-form $\theta$ with positive volume $\int_X \theta^n>0$, we follow Guedj-Zeriahi to define the non-pluripolar Monge-Amp\`ere measure $(\theta+dd^c u)^n$ for any $\theta$-psh function $u$. The class $\mathcal{E}(X,\theta)$ consists of $\theta$-psh functions with full Monge-Amp\`ere mass. %Our next result is a counterpart of Theorem \ref{thm: cv cap bounded} to unbounded functions in $\mathcal{E}(X,\theta)$. It is stated as follows. 

%In constructing weak solutions one often needs to use convergence in capacity of the potentials, which guarantees the convergence of the Monge-Amp\`ere measures. Our first main result in this direction is the following

A famous example due to Cegrell  \cite[Example 3.25]{GZ17} shows that the Monge-Amp\`ere operator is not continuous with respect to the $L^1$-topology. Xing \cite[Theorem 1]{Xin09} showed that it is continuous with respect to convergence in capacity of uniformly bounded psh functions (for the definition of convergence in capacity, we refer to Section \ref{sect: cont of MA}).  This is the weakest type of convergence that ensures convergence of Monge-Amp\`ere measures.  In our result below, which fits into this context, we prove that convergence in $L^1$ together with  domination by a fixed positive non-pluripolar measure imply convergence in capacity, hence convergence of the Monge-Amp\`ere measures.  

\begin{theorem}\label{main: cv cap}
  Assume $u_j,u \in \mathcal{E}(X,\theta)$ and $u_j\to u$ in $L^1(X,\omega^n)$. 
  \begin{enumerate}
      \item If $(\theta + dd^c u_j)^n \leq \mu$, where $\mu$ is a positive non-pluripolar measure, then $u_j$ converges to $u$ in capacity. 
      \item If $u_j\to u$ in capacity then $(\theta+dd^c u_j)^n$ weakly converges to $(\theta+dd^c u)^n$. 
  \end{enumerate}
\end{theorem}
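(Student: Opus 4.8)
The plan is to treat the two statements by different mechanisms: (2) by a truncation argument that reduces it to the uniformly bounded case, and (1) by a monotone-envelope squeeze in which the domination hypothesis is the decisive ingredient. Throughout I will use the comparison principle, which holds under the Guan–Li condition by Chiose's result, and the fact that $\PSH(X,\theta)$ admits psh-envelopes in this setting.

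For part (2) I would first reduce to bounded potentials. Put $u_j^k=\max(u_j,-k)$ and $u^k=\max(u,-k)$. Since $\max$ is $1$-Lipschitz, $\{|u_j^k-u^k|>\delta\}\subseteq\{|u_j-u|>\delta\}$, so $u_j^k\to u^k$ in capacity for each fixed $k$, and the continuity of the Monge--Amp\`ere operator on uniformly bounded $\theta$-psh functions (the Guan--Li analogue of Xing's theorem) gives $(\theta+dd^c u_j^k)^n\weak(\theta+dd^c u^k)^n$ as $j\to\infty$. Because the non-pluripolar products of $u_j$ and $u_j^k$ agree on the open set $\{u_j>-k\}$, for a fixed $\chi\in C^0(X)$ the difference between $\int\chi(\theta+dd^c u_j)^n$ and $\int\chi(\theta+dd^c u_j^k)^n$ is controlled by $\|\chi\|_\infty$ times the mass carried on $\{u_j\le-k\}$. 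The crux of (2) is thus the uniform tail estimate $\sup_j\int_{\{u_j\le-k\}}(\theta+dd^c u_j)^n\to0$ as $k\to\infty$, which is exactly where $u_j\in\mathcal{E}(X,\theta)$ enters; I would prove it by using convergence in capacity to compare the sublevel sets $\{u_j\le-k\}$ with sublevel sets of $u$, and then invoking the decay of the tail masses of the fixed full-mass potential $u$ via the comparison principle.

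For part (1), I would first record the measure-theoretic fact that a finite positive measure vanishing on pluripolar sets is continuous with respect to capacity, i.e. $\capa(E_j)\to0$ implies $\mu(E_j)\to0$; this follows from countable subadditivity of $\capa$ and the equivalence between sets of zero capacity and pluripolar sets, through a standard $\limsup$-set argument. I would then squeeze $u_j$ between two monotone envelopes. The decreasing envelopes $P_j=(\sup_{k\ge j}u_k)^*$ satisfy $P_j\downarrow u$ because $u_j\to u$ in $L^1$, hence $P_j\to u$ in capacity, and since $u_j\le P_j$ the upper deviation $\capa(\{u_j-u>\delta\})\to0$ without using the domination. For the lower side I would introduce the increasing envelopes $Q_j$, defined as the largest $\theta$-psh function below $\inf_{k\ge j}u_k$; these increase to some $Q\le u$, and since $Q_j\le u_j\le P_j$ the whole statement reduces to proving $Q=u$, for then $Q_j\uparrow u$ in capacity and the squeeze yields $u_j\to u$ in capacity.

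The identity $Q=u$ is the main obstacle, and it is precisely here that $(\theta+dd^c u_j)^n\le\mu$ is indispensable. My plan is to apply a comparison-principle capacity estimate of the shape $\delta^n\,\capa(\{u_j<u-2\delta\})\le C\int_{\{u_j<u-\delta\}}(\theta+dd^c u_j)^n$, bound the right-hand side by $\mu(\{u_j<u-\delta\})$, and show that this $\mu$-mass tends to $0$. The delicate point is that $L^1$-convergence only gives almost-everywhere convergence for $\omega^n$ along a subsequence, whereas a merely non-pluripolar $\mu$ may charge $\omega^n$-null sets, so one cannot pass to the limit naively; breaking this apparent circularity is the heart of the proof. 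I would resolve it by contradiction: if $\capa(\{u_j<u-\delta_0\})\ge c_0$ along a subsequence, the capacity estimate forces $\mu(\{u_j<u-\delta_0/2\})\ge c_1>0$, and then, using the comparison principle to transfer the excess mass of $u_j$ onto a set of vanishing capacity and invoking the capacity-continuity of $\mu$ together with the full-mass property, I would derive a contradiction, so that no lower gap can survive and $Q=u$. Combining the two one-sided controls gives convergence in capacity, and part (2) then upgrades this to weak convergence of the Monge--Amp\`ere measures.
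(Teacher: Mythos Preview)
Your envelope-squeeze framework for (1) is exactly the paper's strategy: the paper also forms the rooftop envelopes $v_{j,k}=P_\theta(\min(u_j,\dots,u_k))$, lets $k\to\infty$ to get $v_j=P_\theta(\inf_{k\ge j}u_k)$, and reduces everything to $v:=\lim_j v_j=u$. The genuine gap is in how you propose to prove $Q=u$. Your capacity estimate $\delta^n\capa(\{u_j<u-2\delta\})\le C\int_{\{u_j<u-\delta\}}\theta_{u_j}^n$ is not available for unbounded $u_j,u\in\mathcal{E}(X,\theta)$ without further work, and even granting it, the contradiction argument you sketch (``transfer the excess mass onto a set of vanishing capacity'') is not an argument; you yourself identify the circularity and then resolve it only in words. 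The paper breaks this circularity by a device you do not mention: it replaces $u_j$ by the \emph{bounded} functions $e^{u_j}$. Since $e^{u_j}\to e^{u}$ in $L^1(\omega^n)$, a result of Ko{\l}odziej--Nguyen gives $\int_X|e^{u_j}-e^u|\,d\mu\to 0$; this, together with a rooftop-envelope inequality (the paper's Lemma~\ref{lem: MA of rooftop env 2}) and the domination $\theta_{v_{j,k}}^n\le\mu$, yields $\int_X|e^{v_{j,k}}-e^u|\,\theta_{v_{j,k}}^n\le 2^{-j+1}$. One then uses part (2) to pass to the limit and obtains $\int_X|e^{v}-e^u|\,\theta_v^n=0$, whence $v\ge u$ by the domination principle. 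A second point you skip is why $Q_j\not\equiv-\infty$; the paper handles this via $v_-(\theta)>0$ (Lemma~\ref{lem: prep 1}), which again uses the domination $\theta_{v_{j,k}}^n\le\mu$.

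For (2) your truncation scheme is different from the paper's argument, and the step you flag as ``the crux'', the uniform tail estimate $\sup_j\int_{\{u_j\le -k\}}\theta_{u_j}^n\to 0$, is not substantiated; full-mass membership is a pointwise-in-$j$ condition and convergence in capacity of $u_j$ does not by itself give uniformity of the tail decay. The paper sidesteps this entirely: since $u_j,u\in\mathcal{E}(X,\omega)$ one has $\int_X\omega_{u_j}^n=\int_X\omega_u^n=\int_X\omega^n$; expanding $\omega=\theta+(\omega-\theta)$ by Newton's formula and using the lower semicontinuity $\int_X\theta_u^k\wedge(\omega-\theta)^{n-k}\le\liminf_j\int_X\theta_{u_j}^k\wedge(\omega-\theta)^{n-k}$ for every $k$ forces equality in each term, in particular $\int_X\theta_u^n=\liminf_j\int_X\theta_{u_j}^n$, and then a general semicontinuity result for non-pluripolar products finishes. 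This total-mass trick is where the Guan--Li condition is really used, and it is both shorter and more robust than the uniform tail estimate you would need.
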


When  $\theta$ is K\"ahler, the first part of Theorem \ref{main: cv cap} returns to \cite{DV22} (we refer to \cite{Xin09} and \cite{DH12} for some related results).  The second part was proved in \cite{Xin09} and generalized to the setting of relative pluripotential theory in \cite{DDL23}. 

%It generalizes previous results obtained in \cite{Xin09} and \cite{DH12}. 
%Part (2) is well-known in the K\"ahler case 

In our proof of (1), we use the rooftop envelopes as in \cite{Dar15,Dar17} to produce an increasing sequence of $\theta$-psh functions lying below $u_j$ and converging to $u$. Thus, we actually prove that the sequence converges quasi monotone in the terminology of \cite{GT23}. In order to do so, we need the condition that $(\theta+dd^c u_j)^n\leq \mu$ to ensure that the quasi-plurisubharmonic envelopes are not identically $-\infty$.  
Using the same idea, we also obtain in Theorem \ref{thm: cv cap bounded} (where the Guan-Li metric is not needed) an analogue of the first statement in Theorem \ref{main: cv cap} for sequences of uniformly bounded $\theta$-psh functions, positively answering a question of Ko{\l}odziej and Nguyen \cite{KN22}. %Note that when $\theta$ is closed and $X$ is K\"ahler, the condition in Theorem \ref{thmB} implies that $u_j$ converges to $u$ in energy which implies convergence in capacity (see \cite{BBEGZ19}, \cite{GLZ19}).  

%Having the last theorem, it is natural to study the continuity of the operator $(\theta+ dd^c.)^n$ with respect to sequences in $\mathcal{E}(X,\theta)$ that converge in capacity. This turns out to be the case when $\theta$ is a K\"ahler form as shown by Xing \cite{Xin09}. The following result is a generalization of \cite[Theorem 1]{Xin09} to our settings. 
%\begin{theorem}\label{thmC}
 %   Let $u_j$, $u \in \mathcal{E}(X,\theta)$. If $u_j \rightarrow u$ in capacity, then $(\theta+dd^cu_j)^n$ converges to $(\theta +dd^c u)^n$ in the weak sense of measures. 
%\end{theorem}

We next follow ideas of Cegrell \cite{Ceg98} and Ko{\l}odziej-Nguyen \cite{KN22} to solve degenerate complex Monge-Ampère equations with right-hand side being a positive non-pluripolar meaure $\mu$. We first write $\mu = f(\theta + dd^c \phi)^n$, for a bounded $\theta$-psh function $\phi$. We approximate $\phi$ and $f$ to obtain a sequence of regular measures $\mu_j$, and we solve the corresponding Monge-Ampère equations. We finally use Theorem \ref{main: cv cap} to pass to the limit, obtaining the desired solution. 
 %Having the last theorem, a natural question to arises is whether there are solutions to the equation \eqref{main-eqt}. This is the objective of the following theorem. 
 \begin{theorem}\label{main: solutions}
     Assume $\mu$ is a positive Radon measure vanishing on pluripolar sets. Then,
   \begin{enumerate}
       \item  there is a unique constant $c>0$ and a function $u \in \mathcal{E}(X,\theta)$ such that 
         $$ (\theta + dd^c u)^n = c d\mu; $$
         \item  if $\lambda>0$, then there is a uniquely determined $u \in \mathcal{E}(X,\theta)$ such that 
         $$ (\theta + dd^c u)^n = e^{\lambda u} d\mu. $$
   \end{enumerate}
 \end{theorem}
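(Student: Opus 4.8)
The plan is to follow the Cegrell--Kolodziej--Nguyen scheme: decompose $\mu$, solve a regularized family of equations with the existing bounded theory, and pass to the limit using the continuity of the Monge--Amp\`ere operator from Theorem \ref{main: cv cap}. The starting point is a Cegrell-type decomposition: since $\mu$ vanishes on pluripolar sets, I would first produce a bounded $\theta$-psh function $\phi$ and a density $0 \le f \in L^1\big((\theta+dd^c\phi)^n\big)$ with $\mu = f\,(\theta+dd^c\phi)^n$. I would then truncate, setting $f_j = \min(f,j)$ and $\mu_j = f_j(\theta+dd^c\phi)^n$, so that each $\mu_j$ has bounded density with respect to the Monge--Amp\`ere measure of a bounded potential, while $f_j \uparrow f$ and $\mu_j \uparrow \mu$.

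For part (1), I would invoke the known solvability for such dominated measures in the bounded setting to obtain, for each $j$, a bounded $\theta$-psh function $u_j$, normalized by $\sup_X u_j = 0$, and a constant $c_j > 0$ with $(\theta+dd^c u_j)^n = c_j\mu_j$. Integrating gives $c_j = \big(\int_X(\theta+dd^c u_j)^n\big)/\mu_j(X)$; since the Guan--Li condition yields uniform two-sided bounds on the total Monge--Amp\`ere mass of sup-normalized $\theta$-psh functions, and since $\mu_j(X) \uparrow \mu(X) \in (0,\infty)$, the constants $c_j$ stay in a fixed compact subinterval of $(0,\infty)$. By compactness of sup-normalized $\theta$-psh functions I would extract $u_j \to u$ in $L^1(X,\omega^n)$ and $c_j \to c > 0$. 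The domination $(\theta+dd^c u_j)^n = c_j f_j(\theta+dd^c\phi)^n \le C\mu$ with $C = \sup_j c_j$ then places me exactly in the hypotheses of Theorem \ref{main: cv cap}(1), provided I already know $u \in \mathcal{E}(X,\theta)$; granting this, convergence in capacity follows, and part (2) of that theorem gives $(\theta+dd^c u_j)^n \weak (\theta+dd^c u)^n$. On the other side, $c_j\mu_j \weak c\mu$ by monotone convergence together with $c_j \to c$, so $(\theta+dd^c u)^n = c\mu$.

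For part (2) I would run the same approximation but with the self-consistent right-hand side, solving $(\theta+dd^c u_j)^n = e^{\lambda u_j}\mu_j$ for bounded $u_j$. Here the exponential nonlinearity with $\lambda > 0$ is an asset: it furnishes uniform upper and lower bounds on $u_j$ directly from the mass estimates (a large value of $\sup_X u_j$ would force too much total mass, and similarly from below), giving compactness without an auxiliary normalization, and the limit is passed exactly as above via Theorem \ref{main: cv cap}. Uniqueness in (2) is clean and uses only the comparison principle, valid under the Guan--Li condition by Chiose: if $u,v \in \mathcal{E}(X,\theta)$ both solve, then on $\{u < v\}$ the comparison principle gives $\int_{\{u<v\}} e^{\lambda v}\,d\mu \le \int_{\{u<v\}} e^{\lambda u}\,d\mu$, while $e^{\lambda v} > e^{\lambda u}$ there forces $\mu(\{u<v\}) = 0$, and by symmetry $u = v$.

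The two delicate points I anticipate are, first, showing the limit $u$ lies in the full mass class $\mathcal{E}(X,\theta)$ with no loss of mass: this is precisely what makes Theorem \ref{main: cv cap} applicable and what guarantees the limiting equation holds with the correct total mass. I would control it through the monotonicity $f_j \uparrow f$, which makes the construction quasi-monotone in the sense used for Theorem \ref{main: cv cap}, together with the uniform mass bounds. Second, the uniqueness in (1): adding a constant leaves the equation invariant, so $u$ can only be unique up to an additive constant, and the real content is that $c$ is uniquely determined. I would prove this by the comparison/domination principle: for two solutions $(c,u)$ and $(c',v)$ with $c \le c'$, normalizing by an additive constant so that the comparison principle applies on $\{u<v\}$, the inequality $(c'-c)\mu(\{u<v\}) \le 0$ forces $c = c'$ and then $u-v$ constant. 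The main obstacle overall is the mass-preservation at the limit, since on Hermitian manifolds the total Monge--Amp\`ere mass is not a cohomological invariant and can a priori drop as the potentials degenerate.
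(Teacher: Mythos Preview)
Your scheme for part~(2) is essentially the paper's, with one omission: the bounded solvability you invoke, $(\theta+dd^c u_j)^n = e^{\lambda u_j}\mu_j$ with $\mu_j = f_j(\theta+dd^c\phi)^n$ for merely bounded $\phi$, is not directly citable in the semipositive Hermitian setting; the input from \cite{GL23Crelle} requires smooth data. The paper therefore runs a two-stage approximation: first hold $f$ bounded and approximate $\phi$ by smooth $\omega$-psh $\phi_k\searrow\phi$, solve against $f(\omega+dd^c\phi_k)^n$, and pass to the limit via Theorem~\ref{thm: cv cap bounded}; only then truncate $f$. In that second stage the approximate solutions are \emph{decreasing} by Corollary~\ref{cor: uniqness MA-sol}, which supplies the upper bound on $u_j$ for free---your mass argument for an upper bound on $\sup_X u_j$ is incomplete, since a large supremum alone does not force large total mass. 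The lower bound on $\sup_X u_j$ does come from $v_-(\theta)>0$ as you say. Finally, the circularity you flag about needing $u\in\mathcal{E}(X,\theta)$ before applying Theorem~\ref{main: cv cap} is not an issue: the paper's Theorem~\ref{thm cvg cap unbounded} upgrades Theorem~\ref{main: cv cap}(1) by proving membership in $\mathcal{E}(X,\theta)$ as part of the conclusion, so the domination $\theta_{u_j}^n\le e^{\lambda u_1}\mu$ suffices.

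The genuine gap is in part~(1). You propose to solve $(\theta+dd^c u_j)^n = c_j\mu_j$ directly for bounded $u_j$, but in this generality (semipositive big $\theta$ on a Hermitian manifold) that existence statement with an unknown multiplicative constant is not available as a black box---it is essentially what part~(1) is meant to establish. The paper instead \emph{derives} (1) from (2): solve $(\theta+dd^c u_j)^n = e^{u_j/j}\,d\mu$ using part~(2) with $\lambda=1/j$, normalize $v_j=u_j-\sup_X u_j$ so that $\theta_{v_j}^n = c_j e^{v_j/j}\,d\mu$, pin down $c_j$ in a compact interval via $v_-(\theta)\le c_j\int_X e^{v_j/j}\,d\mu\le v_+(\theta)$, and pass to the limit using $\theta_{v_j}^n\le C\mu$ together with Theorem~\ref{thm cvg cap unbounded} and Theorem~\ref{thm: continuity cap}. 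Your uniqueness arguments for both parts are correct and coincide with the paper's Corollaries~\ref{cor: uniqness MA-sol} and~\ref{cor: uniq cst c}, both consequences of the domination principle.
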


% We start by writing $\mu = f(\theta + dd^c \phi)^n$, for a bounded $\theta$-psh function $\phi$ and $f \in L^1((\theta + dd^c \phi)^n)$.
 %We first prove the theorem when the function $f$ is bounded. Then, we approximate the measure $\mu$ by the sequence $\mu_j = \min(f,j) (\theta + dd^c \phi)^n$, $j \in \mathbb{N}^*$.
%Consider   $u_j \in \mathcal{E}^1(X,\theta)$ such that
 %$$ (\theta + dd^c u_j)^n = e^{\lambda u_j} d\mu_j, \; \; \forall j \in \mathbb{N}^*. $$
%We prove that the sequence $(u_j)$ has a subsequence converging in $L^1(X)$ to a $\theta$-psh function $u$ that solves \eqref{main-eqt} (Theorem \ref{thm 4.3}).
%The proof is based on the following convergence theorem  that generalizes \cite[Proposition 2.8]{GZ07} and \cite[Theorem 1.1]{KN23b}. 
%\begin{thmC}
%    Let $\chi \in \mathcal{W}^-$, and let $(u_j) \subset \mathcal{E}(X,\omega)$ be a sequence of non-positive functions converging to a $\omega$-psh function  $u$  in $L^1(X)$. Assume that
%$$ \sup_j \int_X -\chi(u_j)(\omega + dd^c u_j)^n <+\infty,$$
%and
%$$ \lim_{j\rightarrow +\infty} \int_X |\max(u_j,-k) - \max(u,-k)|(\omega + dd^c u_j)^n = 0, $$
%    for every $k \in \mathbb{N}$. Then, $u \in \mathcal{E}(X,\omega)$ and  $((\omega + dd^c u_j)^n)_j$  converges weakly  to  $(\omega + dd^c u)^n$.  
%\end{thmC}
  %In the last theorem, $\mathcal{W}^-$ denotes the set of convex increasing functions $\chi: \mathbb{R}_- \rightarrow \mathbb{R}_-$  satisfying $\chi(-\infty)=-\infty$.

  When $\lambda=0$, the uniqueness of solution is still an open problem even in the case $\mu = f\omega^n$, $f\in L^p(X)$ and $p>1$; we refer to \cite{KN19}  for more information. When $\lambda>0$, and $\mu=fdV$ with $f\in L^p$, $p>1$,  the uniqueness of solution was proved in  \cite{Ngu16} using the modified comparison principle. The latter breaks down when dealing with unbounded solutions. 
  
  %  as in K\"ahler setting  \cite{BBGZ13}. On non-K\"ahler manifolds, a modified comparison principle was established  in \cite[Theorem 0.2]{KN15Phong} and used to prove the uniqueness of bounded solutions in the case  $\mu = f \omega^n$, $f \in L^p(\omega^n)$ and $p>1$ \cite[Lemma 3.6]{Ng16AIM}. 
   To prove uniqueness in our case, we adapt a new method using quasi-psh envelopes as proposed in \cite{LN22} and \cite[Section 2.3]{GL22}. It is based on the following domination principle. 
  \begin{theorem}\label{main: domination principle}
      Fix $c \in [0,1)$. If  $u$, $v \in \mathcal{E}(X,\theta)$ are such that 
$$ (\theta + dd^c u)^n \leq c(\theta + dd^c v)^n $$
on $\{u<v\}$, then $u \geq v$.
  \end{theorem}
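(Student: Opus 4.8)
The plan is to prove that $\{u<v\}$ is negligible by comparing $u$ and $v$ through their rooftop envelope, using the factor $c<1$ to localise the Monge--Amp\`ere mass, and then upgrading the resulting measure identity to a pointwise inequality by an energy argument. Throughout write $\theta_u:=\theta+dd^c u$, $\theta_v:=\theta+dd^c v$ and $V:=\int_X\theta^n>0$, so that $\int_X\theta_u^n=\int_X\theta_v^n=V$ because $u,v\in\mathcal{E}(X,\theta)$. I would set
\[
 h:=P_\theta\big(\min(u,v)\big)=\Big(\sup\{\,w\in\PSH(X,\theta):\ w\le\min(u,v)\,\}\Big)^{*}.
\]
First I would record the standard properties of this envelope: $h$ is $\theta$-psh with $h\le u$ and $h\le v$; since the full mass class is stable under rooftop envelopes (as in \cite{Dar15}, adapted to the present setting), $h\in\mathcal{E}(X,\theta)$; and the non-pluripolar measure $\theta_h^n$ is carried by the contact set $\{h=\min(u,v)\}=\{h=u\}\cup\{h=v\}$, satisfying the envelope domination $\theta_h^n\le \mathbf 1_{\{h=u\}}\theta_u^n+\mathbf 1_{\{h=v\}}\theta_v^n$ (see \cite{GL22}).

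Next comes the mass balance. On the (plurifine) set $\{u<v\}$ the contact with $\min(u,v)$ is with $u$, so there the hypothesis gives $\theta_h^n\le\theta_u^n\le c\,\theta_v^n$, while on $\{u\ge v\}$ we simply use $\theta_h^n\le\theta_v^n$. Integrating and using $\int_X\theta_h^n=V$ yields
\[
 V=\int_X\theta_h^n\le c\int_{\{u<v\}}\theta_v^n+\int_{\{u\ge v\}}\theta_v^n=V-(1-c)\int_{\{u<v\}}\theta_v^n.
\]
Since $c<1$ this forces $\int_{\{u<v\}}\theta_v^n=0$; moreover every inequality above must be an equality, so I obtain the sharp conclusions $\theta_h^n=\theta_v^n$ and that this common measure is carried by $\{h=v\}$.

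It remains to upgrade this to $u\ge v$. Now $h\le v$ lie in $\mathcal{E}(X,\theta)$ with $\theta_h^n=\theta_v^n$ carried by $\{h=v\}$, and I claim $h=v$, whence $u\ge h=v$. The mechanism is the energy identity
\[
 0=\int_X(v-h)\big(\theta_v^n-\theta_h^n\big)=\int_X(v-h)\,dd^c(v-h)\wedge\Theta,\qquad \Theta:=\sum_{k=0}^{n-1}\theta_v^{\,k}\wedge\theta_h^{\,n-1-k},
\]
followed by integration by parts,
\[
 \int_X(v-h)\,dd^c(v-h)\wedge\Theta=-\int_X d(v-h)\wedge d^c(v-h)\wedge\Theta\le 0.
\]
As the left-hand side vanishes and each term on the right is nonnegative, all of them vanish, which forces $v-h$ to be constant; the constant must be $0$ since otherwise $\{h=v\}$ would be empty while $\theta_v^n$ (carried by it) has total mass $V>0$.

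The hard part is precisely the integration by parts in the last step. On a non-K\"ahler manifold $\theta$ and $\omega$ need not be closed, so the naive formula produces error terms involving $dd^c\theta$ and $d\omega\wedge d^c\omega$, and in addition $v-h$ need not be bounded, so the integrals may be merely formal. This is exactly where the Guan--Li condition is indispensable: by Chiose's theorem \cite{Chi16} it is equivalent to the comparison principle, and it is what makes the Hermitian integration by parts and the positivity of the associated Dirichlet form valid. Following \cite{LN22} and \cite{GL22}, I would carry out the energy computation along the bounded truncations (for instance $\max(v,-j)$ and $\max(h,-j)$, or the increasing envelopes $P_\theta(\min(h+s,v))\uparrow v$), where the error terms are controlled, and then pass to the limit using the continuity of the Monge--Amp\`ere operator in capacity from Theorem~\ref{main: cv cap}. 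Making this limiting argument rigorous for unbounded potentials, while keeping the non-closed error terms under control via Guan--Li, is the main obstacle.
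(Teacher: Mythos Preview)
Your approach has a fundamental gap at the very first step: you assert that $\int_X\theta_u^n=\int_X\theta_v^n=\int_X\theta^n$ for $u,v\in\mathcal{E}(X,\theta)$, but in this paper $\theta$ is only a smooth semipositive $(1,1)$-form with $\int_X\theta^n>0$; it is \emph{not} assumed closed. The Guan--Li condition is imposed on $\omega$, not on $\theta$, and $\theta\le\omega$ does not transfer it. Consequently the Monge--Amp\`ere volume $\int_X\theta_\varphi^n$ need not be constant on $\PSH(X,\theta)\cap L^\infty$, let alone on $\mathcal{E}(X,\theta)$: the paper deliberately works with the two (a priori different) quantities $v_-(\theta)$ and $v_+(\theta)$, and the definition of $\mathcal{E}(X,\theta)$ adopted here is \emph{not} ``$\int_X\theta_u^n=\int_X\theta^n$'' but the vanishing of the tail mass $\theta_{\max(u,-t)}^n(\{u\le -t\})$. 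Your mass-balance inequality $V\le V-(1-c)\int_{\{u<v\}}\theta_v^n$ therefore has no footing.

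The same issue undermines your energy step. The identity $\int_X(v-h)\,dd^c(v-h)\wedge\Theta=-\int_X d(v-h)\wedge d^c(v-h)\wedge\Theta$ requires $d\Theta=0$, but $\Theta=\sum_k\theta_v^{\,k}\wedge\theta_h^{\,n-1-k}$ is built from $\theta$, which is not closed, so the error terms involve $dd^c\theta$ and $d\theta\wedge d^c\theta$, not $dd^c\omega$ and $d\omega\wedge d^c\omega$; the Guan--Li hypothesis says nothing about the former. You acknowledge this obstacle but do not overcome it, and the references you invoke treat bounded potentials or closed reference forms and do not supply the missing identity here.

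The paper's proof avoids both problems by a completely different device. After reducing to $u\le v$, it introduces for $b>1$ the envelopes $u_b:=P_\theta\big(bu-(b-1)v\big)\in\mathcal{E}(X,\theta)$. Using only Theorem~\ref{thm: env} and Corollary~\ref{cor: MA inequality}, one sees that $\theta_{u_b}^n$ is carried by the contact set $\{u_b=bu-(b-1)v\}$, where $b^{-1}u_b+(1-b^{-1})v\le u$; the hypothesis together with multilinearity then forces, once $(1-b^{-1})^n>c$, the mass $\theta_{u_b}^n$ to concentrate on $\{u=v\}$, and there $\theta_{u_b}^n\le\theta_v^n$. Choosing an increasing $h$ with $h(|v|)\in L^1(\theta_v^n)$ gives $\int_X h(|u_b|)\,\theta_{u_b}^n\le\int_X h(|v|)\,\theta_v^n$, which combined with $v_-(\theta)>0$ (Corollary~\ref{cor: v- positive}) prevents $\sup_X u_b\to-\infty$. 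A compactness argument and the observation $u_b\le v-ab$ on $\{u<v-a\}$ finish the proof. This route never compares $\int_X\theta_u^n$ with $\int_X\theta_v^n$ and never integrates by parts against powers of $\theta$; it relies only on the strict positivity of $v_-(\theta)$.
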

 
%To prove the last theorem, we adapt the proof of \cite[Proposition 2.8]{GL22}. We consider the functions, 
%$$ \varphi_j = \sup \{ h \in \PSH(X,\omega) : h \leq j u - (j-1)v \}, \; \; j \in \mathbb{N}^*. $$
%The main problem is to prove $(\varphi_j) \subset \mathcal{E}(X,\omega)$ and the existence of $\varphi \in \PSH(X,\omega)$ such that  $ \varphi_j \rightarrow \varphi$ in $L^1(X)$. 

  The paper is organized as follows. In Section \ref{sect: NP measures}, we define the non-pluripolar Monge-Ampère measures of $\theta$-plurisubharmonic functions, and we study basic properties of the Monge-Amp\`ere operator in the full mass class $\mathcal{E}(X,\theta)$. In Section \ref{sect: cont of MA}, we study the convergence in capacity of functions in $\mathcal{E}(X,\theta)$ and prove Theorem \ref{main: cv cap}.  Section \ref{sect: solutions to MA-E} is devoted to solving degenerate Monge-Ampère equations. 
  \medskip 

{\bf Acknowledgment.}
 The author would like to express his deepest gratitude to his supervisors Omar Alehyane  and Chinh H. Lu  for many valuable discussions and for their useful suggestions and comments. He also thanks Prof. Duc-Viet Vu  for pointing out the paper \cite{DV22}, where the statement (1) in Theorem \ref{main: cv cap} is proved in the K\"ahler case. 
 The paper was written while the author was visiting Université d'Angers. He is grateful to the university's members for their warm welcoming. The stay was financed by the Doctoral Mobility Support Program-Embassy of France in Morocco.
\section{Non-pluripolar Monge-Amp\`ere measures} \label{sect: NP measures}
\subsection{Definitions}
Let $X$ be a compact complex manifold of dimension $n$, and let $\omega$ be a hermitian form on $X$ satisfying the Guan-Li condition 
\[
dd^c \omega =0 \; \text{and} \; d\omega \wedge d^c \omega=0. 
\]
Let $\theta$ be a smooth real $(1,1)$-form which is semipositive and big. Up to rescaling $\omega$, there is no loss of generality in assuming $\theta \leq \omega$.

We say that a function $h : X \longrightarrow \mathbb{R} \cup \{-\infty\}$ is quasi-plurisubharmonic (quasi-psh for short) if it is locally given by the sum of a smooth and a psh function. In the sequel, we will denote by $\PSH(X,\theta)$ the set of quasi-psh functions $u \in L^1(X)$ satisfying  $\theta + dd^c u\geq 0$ in the sense of currents. Functions in $\PSH(X,\theta)$ are called $\theta$-plurisubharmonic; we will use the notation $\theta$-psh for short. If $u \in \PSH(X,\theta)$, then we set $\theta_u = \theta + dd^c u$. 

In \cite{GL22}, the authors defined the following quantities: 
$$ v_+(\theta) = \sup \left\{ \int_X (\theta + dd^c u)^n : \; u \in \PSH(X,\theta) \cap L^\infty(X) \right\}, $$
and 
$$ v_-(\theta) = \inf \left\{ \int_X (\theta + dd^c u)^n : \; u \in \PSH(X,\theta) \cap L^\infty(X)  \right\}. $$
It is of great interest to know whether $v_+(\theta)<+\infty$ and/or $v_-(\theta)>0$.

 Since, in our settings, $\omega$ satisfies the Guan-Li condition, we get by Stokes' theorem that $v_-(\omega)=v_+(\omega)=\int_X\omega^n$. Since $\theta \leq \omega$, we obtain $v_+(\theta) \leq v_+(\omega)<\infty$. We prove in Corollary \ref{cor: v- positive} that we also have $v_-(\theta)>0$.

The complex Monge-Ampère operator $(\theta + dd^c .)^n$ is well-defined on bounded $\theta$-psh functions by Bedford-Taylor's construction \cite{BT76}. We are going to extend this definition to unbounded $\theta$-psh functions, by following the work of Guedj-Zeriahi \cite{GZ07}. 

Let $u \in \PSH(X,\theta)$, and  set  $u_t =\max(u,-t)$, for   every $t \in \mathbb{R}^+$.
 It follows from the plurifine property of the Monge-Ampère operator on bounded functions \cite{BT76} (see also \cite[Theorem 5.3]{Sal23}) that
$$ \textit{1}_{\{u_{t}>-s\}}(\theta + dd^c u_{t})^n = \textit{1}_{\{u_{t}>-s\}}(\theta + dd^c \max(u_{t},-s))^n,  $$
for every $t$, $s\in \mathbb{R}_+$. If $t\geq s$, then 
\begin{align*}
   \textit{1}_{\{u>-t\}}(\theta + dd^c u_{t})^n &\geq 
   \textit{1}_{\{u>-s\}}(\theta + dd^c u_{t})^n \\
   &=  \textit{1}_{\{u_{t}>-s\}}(\theta + dd^c u_{t})^n \\
   &= \textit{1}_{\{u>-s\}}(\theta + dd^c u_{s})^n. 
\end{align*}
We conclude that the sequence of general term
$$ \textit{1}_{\{u> -t\}} (\theta + dd^c \max(u,-t))^n, \; \; t \geq 0, $$
is non-decreasing. 
Since the total mass of these measures are uniformly bounded from above by $v_+(\theta)$, the non-pluripolar Monge-Ampère measure $(\theta+ dd^cu)^n$ is well-defined by the strong limit
$$ \theta_u^n = \lim_{t\rightarrow +\infty }  \textit{1}_{\{u> -t\}} (\theta + dd^c \max(u,-t))^n. $$

%We notice that, if $u^1,...,u^n \in \mathcal{E}(X,\theta)$, then the mixed Monge-Ampère measure 
%$$\theta_{u^1} \wedge ... \wedge \theta_{u^n} := \lim_{t \rightarrow +\infty} \textit{1}_{\cap_j \{u^j >-t\}} \theta_{\max(u^1,-t)} \wedge ... \wedge \theta_{\max(u^n,-t)} $$ 
%defines a positive Radon measure on $X$. 

The following result, known as the maximum principle, is a classical tool in the theory of Monge-Ampère equations. We will use it on several occasions.  
\begin{theorem}\label{thm: plurifine property} 
    If $u$, $v\in \PSH(X,\theta)$, then  
    $$  \textit{1}_{\{u>v\}} (\theta + dd^cu)^n = \textit{1}_{\{u>v\}} (\theta + dd^c \max(u,v))^n.   $$
\end{theorem}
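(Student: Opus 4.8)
The plan is to reduce the statement to the maximum principle for bounded $\theta$-psh functions, where it is a consequence of the plurifine locality of the Bedford-Taylor operator recalled above, and then to recover the general case by passing to the limit in the truncation that defines the non-pluripolar product.

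First I would set $w=\max(u,v)$ and introduce the canonical truncations $u_t=\max(u,-t)$, $v_t=\max(v,-t)$, $w_t=\max(w,-t)$, observing that $w_t=\max(u_t,v_t)$ for every $t\geq 0$. The elementary but decisive point is the set identity
$$\{u_t>v_t\}=\{u>v\}\cap\{u>-t\},$$
which follows directly from the definition of the maximum by distinguishing whether $u>-t$ or not. Since $u_t$ and $v_t$ are bounded, their difference is plurifine continuous, so $\{u_t>v_t\}$ is plurifine open; and on this set $u_t=w_t$. The plurifine property of the Monge-Amp\`ere operator on bounded functions (used already above, cf. \cite{BT76} and \cite[Theorem 5.3]{Sal23}) then yields
$$\mathbf{1}_{\{u>v\}\cap\{u>-t\}}(\theta+dd^c u_t)^n=\mathbf{1}_{\{u>v\}\cap\{u>-t\}}(\theta+dd^c w_t)^n. \qquad (\star)$$

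Next I would let $t\to+\infty$. By construction $\theta_u^n$ and $\theta_w^n$ are the increasing strong limits of $\mathbf{1}_{\{u>-t\}}(\theta+dd^c u_t)^n$ and $\mathbf{1}_{\{w>-t\}}(\theta+dd^c w_t)^n$, so for any Borel set $B$ the monotone convergence theorem gives
$$\int_{\{u>v\}\cap B}\theta_u^n=\lim_{t\to+\infty}\int_{\{u>v\}\cap B\cap\{u>-t\}}(\theta+dd^c u_t)^n.$$
The key observation is that $w=u$ on $\{u>v\}$, hence $\{u>v\}\cap\{w>-t\}=\{u>v\}\cap\{u>-t\}$, so the corresponding formula for $\theta_w^n$ integrates $(\theta+dd^c w_t)^n$ over the same sets $\{u>v\}\cap B\cap\{u>-t\}$. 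By $(\star)$ the two integrands coincide there for each $t$, whence $\int_{\{u>v\}\cap B}\theta_u^n=\int_{\{u>v\}\cap B}\theta_w^n$ for every Borel $B$, which is exactly the asserted identity $\mathbf{1}_{\{u>v\}}\theta_u^n=\mathbf{1}_{\{u>v\}}\theta_w^n$.

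The difficulty here is not analytic depth but bookkeeping: one must ensure that the fixed indicator $\mathbf{1}_{\{u>v\}}$ commutes with the limit defining the non-pluripolar product, and that the truncation level is matched consistently for $u$ and for $w$ on the region $\{u>v\}$. Both are settled by the monotonicity and strong convergence of the defining sequences together with the identity $\{u>v\}\cap\{w>-t\}=\{u>v\}\cap\{u>-t\}$, so that the only genuine input beyond elementary set theory is the bounded plurifine property, which is available in this setting.
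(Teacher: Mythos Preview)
Your proof is correct and follows essentially the same strategy as the paper: reduce to the bounded case via the canonical truncations $u_t,v_t$, apply the plurifine locality of the Bedford--Taylor operator there, and pass to the strong limit defining the non-pluripolar product. Your bookkeeping is in fact slightly cleaner than the paper's, since you work directly with the set identity $\{u_t>v_t\}=\{u>v\}\cap\{u>-t\}$ and the observation that $\{u>v\}\cap\{w>-t\}=\{u>v\}\cap\{u>-t\}$, which lets both limits run over the same sets; the paper instead intersects with $\{v>-j\}$ and inserts an extra step to identify $\mathbf{1}_{\{u>-j\}}\theta_{u_j}^n$ with $\mathbf{1}_{\{u>-j\}}\theta_u^n$ before letting $j\to\infty$.
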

\begin{proof}
 For each $j\in \mathbb{N}$, we set $u_j=\max(u,-j)$ and $v_j=\max(v,-j)$. We have by the plurifine property of the operator $(\theta+dd^c.)^n$ on bounded functions  that 
  $$  \textit{1}_{\{u_j>v_j\}} \theta_{u_j}^n = \textit{1}_{\{u_j>v_j\}} \theta_{\max(u_j,v_j)}^n.   $$
  Hence, 
    $$ \textit{1}_{\{u>v\}\cap \{v>-j\}} \theta_{u_j}^n = \textit{1}_{\{u>v\}\cap \{v>-j\}} \theta_{\max(u_j,v_j)}^n.   $$
   Again, we get by the plurifine property that 
  $$  \textit{1}_{\{u>-j\}} \textit{1}_{\{u>-k\}}   \theta_{u_k}^n =  \textit{1}_{\{u>-j\}} \theta_{u_k}^n = \textit{1}_{\{u>-j\}} \theta_{u_j}^n.  $$
   for every $k\geq j$. 
   Letting $k \rightarrow +\infty$ yields 
  $$   \textit{1}_{\{u>-j\}} \theta_{u_j}^n =  \textit{1}_{\{u>-j\}} \theta_{u}^n, $$
   by the definition of the operator $(\theta + dd^c.)^n$. We get similarly that 
   $$ \textit{1}_{\{u>v\}\cap \{v>-j\}} \theta_{\max(u_j,v_j)}^n = \textit{1}_{\{u>v\}\cap \{v>-j\}} \theta_{\max(u,v)}^n.  $$
   That implies 
   $$ \textit{1}_{\{u>v\}\cap \{v>-j\}} \theta_{u}^n = \textit{1}_{\{u>v\}\cap \{v>-j\}} \theta_{\max(u,v)}^n.   $$
    The result thus follows after letting $j\rightarrow +\infty$, since the Monge-Ampère measure $(\theta + dd^c h)^n$ does not charge pluripolar sets for every $h\in\PSH(X,\theta)$.     
\end{proof}
The following result is a consequence of the last theorem.
\begin{corollary}\label{cor: MA inequality}
    Let $u$, $v \in \PSH(X,\theta)$. We have, 
      $$ \theta_{\max(u,v)}^n \geq \textit{1}_{\{u>v\}} \theta_{u}^n + \textit{1}_{\{u\leq v\}} \theta_{v}^n.$$
  If moreover $u \leq v$, then 
    $$ \textit{1}_{\{u = v\}} \theta_v^n \geq \textit{1}_{\{u = v\}} \theta_u^n. $$
\end{corollary}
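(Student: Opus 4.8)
The plan is to reduce both inequalities to a single comparison on contact sets: if $\phi,\psi\in\PSH(X,\theta)$ with $\phi\le\psi$, then
$$ \mathbf{1}_{\{\phi=\psi\}}\theta_\psi^n \ge \mathbf{1}_{\{\phi=\psi\}}\theta_\phi^n. \qquad (\dagger) $$
Granting $(\dagger)$, the first inequality follows by partitioning $X=\{u>v\}\sqcup\{u=v\}\sqcup\{u<v\}$ and applying Theorem \ref{thm: plurifine property} on the two strict pieces: $\mathbf{1}_{\{u>v\}}\theta_{\max(u,v)}^n=\mathbf{1}_{\{u>v\}}\theta_u^n$ and, by symmetry, $\mathbf{1}_{\{u<v\}}\theta_{\max(u,v)}^n=\mathbf{1}_{\{u<v\}}\theta_v^n$. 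On $\{u=v\}$ one has $\max(u,v)=v$, so $(\dagger)$ applied to the pair $v\le\max(u,v)$ gives $\mathbf{1}_{\{u=v\}}\theta_{\max(u,v)}^n\ge\mathbf{1}_{\{u=v\}}\theta_v^n$. Adding the three contributions and using $\{u\le v\}=\{u<v\}\cup\{u=v\}$ yields the first inequality, while the second inequality is exactly $(\dagger)$ with $\phi=u$, $\psi=v$. Thus everything rests on $(\dagger)$.

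To prove $(\dagger)$ I would first treat bounded $\phi,\psi$. For $\epsilon>0$ set $g_\epsilon=\max(\phi,\psi-\epsilon)$; since $\{\phi=\psi\}\subseteq\{\phi>\psi-\epsilon\}$, Theorem \ref{thm: plurifine property} applied to $(\phi,\psi-\epsilon)$ gives
$$ \mathbf{1}_{\{\phi=\psi\}}\theta_\phi^n=\mathbf{1}_{\{\phi=\psi\}}\theta_{g_\epsilon}^n\le\theta_{g_\epsilon}^n. $$
As $\epsilon\downarrow0$ the bounded functions $g_\epsilon$ increase to $\psi$, so Bedford-Taylor's monotone convergence \cite{BT76} makes $\theta_{g_\epsilon}^n$ converge weakly to $\theta_\psi^n$. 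The fixed measure $\mu:=\mathbf{1}_{\{\phi=\psi\}}\theta_\phi^n$ is dominated by every $\theta_{g_\epsilon}^n$, and domination passes to the weak limit upon testing against nonnegative continuous functions; hence $\mu\le\theta_\psi^n$. Since $\mu$ is carried by $\{\phi=\psi\}$, this is exactly $(\dagger)$ for bounded potentials.

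For general $\phi,\psi$ I would truncate. The functions $\phi_t=\max(\phi,-t)$ and $\psi_t=\max(\psi,-t)$ are bounded with $\phi_t\le\psi_t$, and on $A_t:=\{\phi=\psi\}\cap\{\phi>-t\}$ one has $\phi_t=\phi=\psi=\psi_t$, so $A_t\subseteq\{\phi_t=\psi_t\}$. The bounded case, restricted to $A_t$, gives $\mathbf{1}_{A_t}\theta_{\psi_t}^n\ge\mathbf{1}_{A_t}\theta_{\phi_t}^n$. Combining this with the identities $\mathbf{1}_{\{\phi>-t\}}\theta_{\phi_t}^n=\mathbf{1}_{\{\phi>-t\}}\theta_\phi^n$ and $\mathbf{1}_{\{\psi>-t\}}\theta_{\psi_t}^n=\mathbf{1}_{\{\psi>-t\}}\theta_\psi^n$ (both obtained as in the proof of Theorem \ref{thm: plurifine property}, and legitimate since $A_t\subseteq\{\phi>-t\}\cap\{\psi>-t\}$) yields $\mathbf{1}_{A_t}\theta_\psi^n\ge\mathbf{1}_{A_t}\theta_\phi^n$. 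Letting $t\to+\infty$, the sets $A_t$ increase to $\{\phi=\psi\}$ away from the pluripolar set $\{\phi=-\infty\}$, which is negligible for both $\theta_\phi^n$ and $\theta_\psi^n$; monotone convergence then delivers $(\dagger)$.

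The main obstacle is $(\dagger)$ itself, namely comparing the two Monge-Amp\`ere measures precisely on the Borel contact set $\{\phi=\psi\}$, off of which they may genuinely differ. The maximum principle compares measures only on plurifine-open sets of the form $\{\phi>\psi-\epsilon\}$ and is silent on the contact set; the point of the $\epsilon$-shift combined with Bedford-Taylor monotone convergence is exactly to transport the comparison onto $\{\phi=\psi\}$ through a limiting argument, and the step requiring care is that domination by the approximating measures survives the weak limit. Once $(\dagger)$ is in hand for bounded potentials, the truncation argument is routine because pluripolar sets carry no Monge-Amp\`ere mass.
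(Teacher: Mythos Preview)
Your proof is correct. The paper itself defers the first inequality to \cite[Lemma 2.9]{DDL23} and then remarks that the second follows from the first (by applying the first inequality with the roles of $u$ and $v$ swapped and using $\max(u,v)=v$ when $u\le v$). You organize the argument in the opposite order: you establish the contact-set comparison $(\dagger)$ directly via the $\epsilon$-shift $g_\epsilon=\max(\phi,\psi-\epsilon)$ together with Bedford--Taylor monotone convergence, and then assemble the first inequality from $(\dagger)$ and plurifine locality. The substantive technical step---handling the contact set by perturbation and passage to the limit---is the same one underlying \cite[Lemma 2.9]{DDL23}, so the two routes share the same core; your version has the mild advantage of making this key step explicit rather than citing it, and of isolating $(\dagger)$ as the single lemma from which both parts flow.
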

\begin{proof}
The proof of the first inequality is exactly the same as that of \cite[Lemma 2.9]{DDL23}. The second one follows easily from the first. 
\end{proof}

\subsection{Quasi-plurisubharmonic envelopes}
If $h : X \rightarrow \mathbb{R}_-\cup\{\pm \infty\}$ is a measurable function, then  the $\theta$-psh envelope of $h$ is defined by 
$$ P_{\theta}(h) = \left(\sup \{ \varphi \in \PSH(X,\theta) : \varphi \leq h\}\right)^*,  $$
with the convention $\sup\emptyset = -\infty$. When $h=\min(u,v)$, we use the notation $P_\theta(u,v)=P_\theta(\min(u,v))$.

During the last decades, the usage of these envelopes has  become a strong method for studying degenerate Monge-Ampère equations for both local and global cases. We refer to \cite{BT76, Ber19, CZ19, GLZ19, GL22, DDL23} to name a few. 

The following theorem is in the heart of the theory of quasi-psh envelopes.  
%We refer to \cite[Theorem 2.3]{GL22} for a proof of this result in the case when $h$ is bounded from below. 

\begin{theorem}\label{thm: env}
    Assume $h$ is quasi-continuous on $X$ and $P_\theta(h) \in \PSH(X,\theta)$. Then, $P_\theta(h) \leq h$ outside a pluripolar set, and we have 
    $$ \int_{\{P_\theta(h)<h\}} (\theta + dd^c P_\theta(h))^n = 0. $$ 
\end{theorem}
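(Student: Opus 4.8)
The plan is to establish the two assertions in turn: first the quasi-everywhere bound $P_\theta(h)\le h$, and then the vanishing of the Monge--Amp\`ere mass on the non-contact set $\{P_\theta(h)<h\}$.

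For the first assertion, let $s:=\sup\{\varphi\in\PSH(X,\theta):\varphi\le h\}$, so that $P_\theta(h)=s^*$ by definition and $s\le h$ pointwise everywhere, each competitor lying below $h$. By the Bedford--Taylor theorem on negligible sets, adapted to the $\theta$-psh setting, the set $\{s^*>s\}$ is pluripolar; hence $P_\theta(h)=s\le h$ outside a pluripolar set. Quasi-continuity of $h$ is not needed here: it enters only in the second part, where it ensures that $\{P_\theta(h)<h\}$ is plurifine open and that $\theta_{P_\theta(h)}^n$, which does not charge pluripolar sets, can be meaningfully integrated over it.

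Write $u:=P_\theta(h)$ and consider first the case where $h$ is continuous, so $u$ is continuous and $\Omega:=\{u<h\}$ is open. To prove $\theta_u^n(\Omega)=0$ I would argue by a contact-set (balayage) argument. Fix $x_0\in\Omega$ and a small coordinate ball $B\Subset\Omega$ on which $\theta=dd^c\rho$ for a smooth local potential $\rho$, so that $u+\rho$ is psh on $B$. Solving the homogeneous Monge--Amp\`ere equation on $B$ with boundary data $u+\rho$ yields a maximal psh function $w\ge u+\rho$ with $(dd^c w)^n=0$; gluing $\hat u:=w-\rho$ on $B$ with $u$ on $X\setminus B$ produces a $\theta$-psh function $\hat u\ge u$ with $\theta_{\hat u}^n=0$ on $B$. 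If $B$ is chosen small enough that the oscillations of $u$ and $\rho$ on $B$ are dominated by the gap $\eta:=\inf_{\overline{B}}(h-u)>0$, then the maximum principle for the maximal solution forces $\hat u\le h$ on $\overline{B}$, while $\hat u=u\le h$ holds quasi-everywhere off $B$. Since $u$ is the largest $\theta$-psh minorant of $h$ (values being determined off pluripolar sets), this gives $\hat u\le u$, hence $\hat u=u$ on $B$ and $\theta_u^n=\theta_{\hat u}^n=0$ there. Covering $\Omega$ by countably many such balls yields $\theta_u^n(\Omega)=0$.

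For a general quasi-continuous $h$ I would approximate by continuous barriers: quasi-continuity together with Tietze's theorem furnishes continuous functions $h_k$, with $h_k=h$ off an open set $G_k$ whose capacity tends to $0$, so that $h_k\to h$ in capacity. Setting $u_k:=P_\theta(h_k)$, one shows $u_k\to u$ in capacity and passes the identity $\theta_{u_k}^n(\{u_k<h_k\})=0$ to the limit using the continuity of the Monge--Amp\`ere operator of uniformly bounded potentials under convergence in capacity, together with Theorem~\ref{thm: plurifine property} and Corollary~\ref{cor: MA inequality}. The main obstacle lies precisely here, because the envelope operator is nonlocal and $h_k$ may differ wildly from $h$ on $G_k$: one must prove the capacity-stability $u_k\to u$ and show that the discrepancy between $\theta_{u_k}^n$ and $\theta_u^n$ is carried by the small-capacity sets $G_k$. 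I expect to control this by squeezing $u_k$ between $P_\theta(\min(h,h_k))$ and $P_\theta(\max(h,h_k))$ and checking that these two envelopes share the same Monge--Amp\`ere mass in the limit, the residual part being supported on $\bigcap_k G_k$, a set of zero capacity hence pluripolar and negligible for all the measures involved.
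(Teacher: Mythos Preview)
Your argument for the first assertion via negligible sets is fine.

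For the second assertion there are genuine gaps. In the continuous case you write ``$\theta=dd^c\rho$ for a smooth local potential $\rho$'', but in this paper $\theta$ is only a smooth semipositive $(1,1)$-form on a Hermitian manifold with no closedness assumption; when $d\theta\neq 0$ no such $\rho$ exists, and the whole balayage step (the claim that $u+\rho$ is psh on $B$ and that a local homogeneous Dirichlet solution yields a $\theta$-psh competitor) collapses. You also assert that $u=P_\theta(h)$ is continuous when $h$ is; this is a nontrivial regularity statement you neither prove nor cite, yet your oscillation bound $\mathrm{osc}_{\overline B}u\le\eta$ uses it. More seriously, the passage from continuous to general quasi-continuous $h$ is not carried out. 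You correctly flag that the envelope is nonlocal and that the capacity-stability $u_k\to u$ is ``the main obstacle'', but the proposed squeezing between $P_\theta(\min(h,h_k))$ and $P_\theta(\max(h,h_k))$ remains a hope, not an argument: modifying $h$ on a set of small capacity can move the envelope globally by an arbitrary amount (take $h_k$ very large or very negative on $G_k$), so there is no reason these two auxiliary envelopes share a common limit.

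The paper avoids all of this by a different and much more stable reduction. It invokes \cite[Theorem~2.3]{GL22}, which already gives the result for \emph{bounded} quasi-continuous barriers on Hermitian manifolds, and then treats a general quasi-continuous $h$ via the \emph{monotone} truncations $h_j:=\max(h,-j)$. Since $h_j$ decreases to $h$, the envelopes $P_\theta(h_j)$ decrease to $P_\theta(h)$; after cutting off at a fixed level $-C$ one applies the Bedford--Taylor convergence theorem for decreasing uniformly bounded sequences together with plurifine locality (Theorem~\ref{thm: plurifine property}) to pass to the limit in $\int_{\{P_\theta(h_j)<h_j\}}\theta_{P_\theta(h_j)}^n=0$. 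Monotone approximation is exactly what sidesteps the nonlocal instability of the envelope under capacity-small perturbations, which is where your approach stalls.
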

\begin{proof}    
    For each $j \in \mathbb{N}$, we set $h_j=\max(h,-j)$. It follows from \cite[Theorem 2.3]{GL22} that $P_\theta(h_j)\leq h_j$ outside a pluripolar set. Since a countable union of pluripolar sets is pluripolar, there is a pluripolar set $P$ such that $P_\theta(h_j) \leq h_j$ on $X\setminus P$ for every $j$. That implies  $P_\theta(h) \leq h$ on $X\setminus P$.

    We move on to the proof that $(\theta+dd^cP(h))^n$ is carried by the set $\{P_\theta(h)=h\}$.
    According to \cite[Theorem 2.3]{GL22}, we have 
    $$ \int_{\{P_\theta(h_j)<h_j\}} (\theta + dd^c P_\theta(h_j))^n=0, \; \; \forall j \geq 1. $$
%    Fixing $C>0$,  we get by Theorem \ref{thm: plurifine property} that 
%\begin{align*}
%    &\int_{\{P_\theta(h_j)<h_j\}\cap \{P_\theta(h) >-C\}} (\theta + dd^c \max(P_\theta(h_j),-C))^n \\
 %   &=\int_{\{P_\theta(h_j)<h_j\}\cap \{P_\theta(h) >-C\}} (\theta + dd^c \max(P_\theta(h_j))^n =0. 
%\end{align*}
Fix $k \in \mathbb{N}$. We have for every $j\geq k$
 \[
\int_{\{P_\theta(h_k)<h\}} (\theta + dd^c P_\theta(h_j))^n=0, 
\]
since in this case $\{P_\theta(h_k)<h\} \subset \{P_\theta(h_j)<h_j\}$.
Fixing $C>0$, we get by Theorem \ref{thm: plurifine property} that 
 \[
\int_{\{P_\theta(h_k)<h\}\cap \{P_\theta(h) >-C\}} (\theta + dd^c \max(P_\theta(h_j),-C))^n=0. 
\]
Notice that the function 
$$ f_k = \left(\max(h,P_\theta(h_k)) - P_\theta(h_k)\right) \times \left( \max(P_\theta(h),-C) + C \right)  $$
is a positive bounded quasi-continuous function that satisfies 
$$ \int_X f_k (\theta + dd^c \max(P_\theta(h_j),-C))^n = 0, \; \forall j \geq k. $$
The sequence $(\max(P_\theta(h_j),-C))_j$ is uniformly bounded and decreases to $\max(P_\theta(h),-C)$. Therefore, we get  
the Bedford-Taylor convergence theorem that  
$$ (\theta + dd^c\max(P_\theta(h_j),-C))^n \rightarrow (\theta + dd^c \max(P_\theta(h),-C))^n \; \; \textit{weakly.} $$
It thus follows from \cite[Lemma 2.4]{DDL23} that 
$$ \int_X f_k (\theta + dd^c P(h))^n =0, $$
which is equivalent to 
$$ \int_{\{P_\theta(h_k)<h\}\cap \{P_\theta(h) >-C\}} (\theta + dd^c P_\theta(h))^n = 0. $$
Since $P_\theta(h_k) \searrow P_\theta(h)$, letting $k\rightarrow +\infty$ yields
$$ \int_{\{P_\theta(h)<h\}\cap \{P_\theta(h) >-C\}} (\theta + dd^c P_\theta(h))^n=0, $$
by the monotone convergence theorem.
The result thus follows by letting $C\rightarrow +\infty$, since $(\theta+dd^c P(h))^n$ does not charge pluripolar sets. 
\end{proof}

The following result is a consequence of the last theorem. 
\begin{corollary}\label{cor: v- positive}
    There is $A\geq 1$ such that $v_-(\omega) \leq Av_-(\theta)$. In particular,  $v_-(\theta)>0$.
\end{corollary}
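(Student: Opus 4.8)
The plan is to reduce the statement to a uniform lower bound on Monge-Amp\`ere masses. Since $\omega$ satisfies the Guan-Li condition, Stokes' theorem already yields $v_-(\omega)=v_+(\omega)=\int_X\omega^n>0$, as recorded in the text. Hence it suffices to produce a constant $A\ge 1$, depending only on $X$, $\omega$ and $\theta$, such that
\[
\int_X (\theta+dd^c u)^n \ \ge\ \frac{1}{A}\int_X\omega^n \qquad \text{for every } u\in\PSH(X,\theta)\cap L^\infty(X).
\]
Taking the infimum over such $u$ then gives $v_-(\omega)\le A\,v_-(\theta)$, and the positivity of $v_-(\theta)$ follows at once from $v_-(\omega)>0$.

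To compare the two forms I would exploit that every bounded $\theta$-psh function is automatically a bounded $\omega$-psh function, because $\theta\le\omega$; consequently $\int_X(\omega+dd^c u)^n=\int_X\omega^n$ is constant along the entire family, again by the Guan-Li condition. Writing $\beta=\omega-\theta\ge 0$ and expanding $\omega+dd^c u=\beta+(\theta+dd^c u)$ produces the defect identity
\[
\int_X\omega^n=\int_X(\theta+dd^c u)^n+\sum_{k=1}^{n}\binom{n}{k}\int_X\beta^k\wedge(\theta+dd^c u)^{n-k},
\]
so the real task becomes bounding the mixed terms on the right by a fixed multiple of $\int_X(\theta+dd^c u)^n$. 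This is where Theorem \ref{thm: env} should enter: the natural way to bring it to bear is through envelopes $P_\theta(v)$ of bounded $\omega$-psh competitors $v$, which are genuine bounded $\theta$-psh functions (the constant $\inf_X v$ is an admissible competitor) whose $\theta$-Monge-Amp\`ere mass is carried by the contact set $\{P_\theta(v)=v\}$. On that set the two forms can be compared directly, and this is the mechanism I would use to convert the constant $\omega$-mass into a lower bound for the $\theta$-mass.

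The main obstacle is genuinely global rather than pointwise. The determinant monotonicity only gives $(\theta+dd^c u)^n\le(\omega+dd^c u)^n$, i.e.\ an upper bound, and the reverse inequality fails precisely on the degeneracy locus of $\theta$, where $(\theta+dd^c u)^n$ may vanish while $\beta^n=(\omega-\theta)^n$ does not; hence no local estimate can control the mixed terms. This is exactly the point at which the bigness of $\theta$ must be used: fixing once and for all a strictly positive current $\theta+dd^c\psi\ge\delta\omega$ with $\delta>0$ provided by bigness, and feeding $\psi$ into the envelope construction, should force the contact sets furnished by Theorem \ref{thm: env} to capture a definite fraction of $\int_X\omega^n$, \emph{uniformly} in $u$. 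Controlling the contribution of the mixed terms near the non-ample locus while keeping $A$ independent of $u$ is the step I expect to demand the most care; the fact that the non-pluripolar Monge-Amp\`ere measures do not charge the pluripolar part of that locus is what should ultimately let the argument close.
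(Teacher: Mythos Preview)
Your plan correctly isolates the two essential ingredients---the singular potential $\psi$ coming from bigness with $\theta+dd^c\psi\ge\varepsilon\omega$, and the envelope machinery of Theorem~\ref{thm: env}---but the route through the ``defect identity'' does not close, and you essentially concede this in your last paragraph. Bounding each mixed mass $\int_X\beta^k\wedge\theta_u^{n-k}$ by a universal multiple of $\int_X\theta_u^n$ is not feasible: since $\theta$ is only semipositive, $\theta_u$ may degenerate on a set where $\beta$ does not, and the ratio of a mixed mass to the top mass can be made arbitrarily large by choosing $u$ appropriately, so no uniform constant exists. The envelope $P_\theta(v)$ of an $\omega$-psh competitor that you propose also points the wrong way: its Monge--Amp\`ere measure is again a $\theta$-mass, and comparing its total mass to $\int_X\omega^n$ is precisely the inequality you are trying to establish.

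The paper bypasses the mixed terms entirely. Given bounded $u\in\PSH(X,\theta)$, it forms the envelope $v:=P_{\varepsilon\omega}(u-\psi)$ with respect to $\varepsilon\omega$, not $\theta$, and with obstacle $u-\psi$. One checks that $v$ is a \emph{bounded} $\varepsilon\omega$-psh function (boundedness above uses the global extremal function of the non-pluripolar set $\{\psi>-1\}$). Then $v+\psi\in\PSH(X,\theta)$ with $v+\psi\le u$. By Theorem~\ref{thm: env} the measure $(\varepsilon\omega+dd^c v)^n$ is carried by the contact set $\{v+\psi=u\}$; on that set $(\varepsilon\omega+dd^c v)^n\le(\theta+dd^c(v+\psi))^n$ because $\varepsilon\omega\le\theta+dd^c\psi$, and Corollary~\ref{cor: MA inequality} gives $(\theta+dd^c(v+\psi))^n\le\theta_u^n$ there. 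Chaining these yields
\[
\varepsilon^n\, v_-(\omega)\ \le\ v_-(\varepsilon\omega)\ \le\ \int_X(\varepsilon\omega+dd^c v)^n\ \le\ \int_X\theta_u^n
\]
in one stroke, with no mixed masses to control. The idea missing from your plan is exactly this choice of envelope: relative to $\varepsilon\omega$ and under the obstacle $u-\psi$, so that the contact-set concentration from Theorem~\ref{thm: env} converts an $\varepsilon\omega$-mass directly into a lower bound for $\int_X\theta_u^n$.
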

%Before giving the proof of the last proposition, we recall the difinition of the global extremal function.
If $D\subset X$ is a Borel set, then  
    $$ V_{D,\omega} = \sup \{h \in \PSH(X,\omega)\; : \; h \leq 0 \; \text{on } D \}, $$
    is called the global extremal function of $D$ with respect to $\omega$. 
    By \cite[Theorem 9.17]{GZ17}, $V_{D,\omega}=+\infty$ if and only if $D \subset \{h = - \infty\}$ for certain function $h \in \PSH(X,\omega)$.
    The latter is equivalent to $D$ being pluripolar by \cite{Vu19IJM}. 
    
    Now we proceed to the proof of the corollary. 
\begin{proof}
The proof was implicitly written in \cite{GL22}. 
    Consider a $\theta$-psh function $\psi$ with analytic singularities such that $\sup_X \psi=0$ and  $\theta + dd^c \psi \geq \varepsilon \omega$, for some $\varepsilon>0$. 
    Fix $u \in \PSH(X,\theta) \cap L^\infty$ and set $v=P_{\varepsilon\omega}(u-\psi)$. The set $D=\{\psi>-1\}$ is non-pluripolar because $\psi$ is quasi-psh. It follows from \cite[Theorem 9.17]{GZ07} that $V_{D,\varepsilon\omega} <+\infty$.  Taking $C = \sup_X u +1$ implies $v \leq C$  on $D$, and hence $v \leq V_{D,\varepsilon\omega} + C <+\infty$. Since $u \leq v$, we infer $v \in \PSH(X,\varepsilon\omega) \cap L^\infty$. Moreover, we have $v+\psi \in \PSH(X,\theta)$ and $v+\psi\leq u$. Therefore, we get by Theorem \ref{thm: plurifine property} and Corollary \ref{cor: MA inequality} that
    $$ v_-(\varepsilon\omega) \leq \int_X (\varepsilon \omega + dd^c v)^n \leq \int_{\{v+\psi=u\}} (\theta + dd^c(\psi+v))^n \leq \int_X \theta_u^n. $$
    Hence $v_-(\theta)\geq v_-(\varepsilon\omega)\geq \varepsilon^n v_-(\omega)$.
\end{proof}

\subsection{The full mass class}
For $u \in \PSH(X,\theta)$, we defined the non-pluripolar Monge-Ampère measure of $u$ by the strong limit
$$ \theta_u^n = \lim_{t \rightarrow +\infty} \textit{1}_{\{u>-t\}}(\theta + dd^c \max(u,-t))^n. $$
 In order to give a natural definition for the operator $(\theta + dd^c.)^n$, it is convenient to limit ourselves to $\theta$-psh functions $u$ satisfying the condition 
$$ \lim_{t\rightarrow +\infty }   (\theta + dd^c \max(u,-t))^n(\{u \leq -t\}) = 0.  $$
Let us denote by $\mathcal{E}(X,\theta)$ the set of such functions.  If $\theta$ is closed, then $\mathcal{E}(X,\theta)$ coincides with the one given in \cite{GZ07, BEGZ10} by \cite[Lemma 1.2]{GZ07}.

We define the set $\mathcal{E}(X,\omega)$ similarly. Since $\omega$ satisfies the Guan-Li conditions, it is clear that a function $u \in \PSH(X,\omega)$ belongs to $\mathcal{E}(X,\omega)$ if, and only if, $u$ satisfies
$$ \int_X (\omega+ dd^cu)^n= \int_X \omega^n.$$ 

We have the following characterizations of the class  $\mathcal{E}(X,\theta)$. 
\begin{prop}\label{prop: carc full mass}
   Let $u \in \PSH(X,\theta)$. The following assertions are equivalent:
    \begin{enumerate}
        \item $u \in \mathcal{E}(X,\theta)$;
        \item $u \in \mathcal{E}(X,\omega)$;
        \item $P_\theta(Au) \in \PSH(X,\theta)$, for every $A\geq1$.
    \end{enumerate}
\end{prop}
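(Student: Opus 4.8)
The plan is to normalize $\sup_X u=0$ (none of the three conditions is affected by adding a constant) and to run the following engine: for a semipositive big form $\tau\in\{\theta,\omega\}$, one has $u\in\mathcal{E}(X,\tau)$ if and only if $P_\tau(Au)\in\PSH(X,\tau)$ for every $A\ge1$. Granting this for both $\theta$ and $\omega$ (Theorem \ref{thm: env} applies verbatim to $\omega$, since $\omega$ is semipositive, big and $\omega\le\omega$), condition (3) is exactly the envelope condition for $\tau=\theta$, while the envelope condition for $\tau=\omega$ characterises (2); it then remains to glue the two envelope conditions together by elementary comparisons. The guiding philosophy is that $\omega$, satisfying the Guan--Li condition, has \emph{constant} Monge--Amp\`ere volume $\int_X\omega_v^n=\int_X\omega^n$ for bounded $v$, so that membership in $\mathcal{E}(X,\omega)$ reduces to the single equality $\int_X\omega_u^n=\int_X\omega^n$ and serves as a rigid reference against which the drifting $\theta$-volume is measured.

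The gluing and the cheap inclusion are straightforward. Since $\theta\le\omega$ forces $\PSH(X,\theta)\subset\PSH(X,\omega)$, every $\theta$-psh minorant of $Au$ is an $\omega$-psh minorant, whence $P_\theta(Au)\le P_\omega(Au)$ and (3) implies $P_\omega(Au)\in\PSH(X,\omega)$ for all $A$. Conversely, using the function $\psi$ produced in the proof of Corollary \ref{cor: v- positive}, which is $\theta$-psh with $\sup_X\psi=0$ and $\theta+dd^c\psi\ge\varepsilon\omega$, one has $\psi+\varepsilon g\in\PSH(X,\theta)$ and $\psi+\varepsilon g\le\varepsilon g$ for every $g\in\PSH(X,\omega)$; feeding $g=P_\omega(Bu)$ into this converts an $\omega$-psh minorant of $Bu$ into a $\theta$-psh minorant of $\varepsilon B\,u$, so that (taking $\varepsilon\le1$) $P_\omega(Bu)\in\PSH(X,\omega)$ for all $B$ gives $P_\theta(Au)\in\PSH(X,\theta)$ for all $A\ge1$. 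Thus the two envelope conditions coincide and $(2)\Leftrightarrow(3)$. As an independent check of $(2)\Rightarrow(1)$, the binomial expansion of $\bigl((\theta+dd^c u_t)+(\omega-\theta)\bigr)^n$ into nonnegative terms yields $(\theta+dd^c u_t)^n\le(\omega+dd^c u_t)^n$ for $u_t=\max(u,-t)$, so the $\theta$-mass lost on $\{u\le -t\}$ is dominated by the $\omega$-mass lost there.

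The main obstacle is the engine itself: the equivalence, via Theorem \ref{thm: env}, between full $\tau$-mass and the non-degeneracy of all the envelopes $P_\tau(Au)$. Setting $\phi_s=P_\tau(\max(Au,-s))$, a decreasing family of bounded $\tau$-psh functions with $\phi_s\searrow P_\tau(Au)$, Theorem \ref{thm: env} concentrates $(\tau+dd^c\phi_s)^n$ on the contact set $\{\phi_s=\max(Au,-s)\}$, which splits along $\{u>-s/A\}$, where $\phi_s=Au$, and $\{u\le -s/A\}$, where $\phi_s=-s$. To pass from full mass to (3) one shows that the mass on the second, escaping piece is controlled by $(\tau+dd^c\max(u,-s/A))^n(\{u\le -s/A\})$, which tends to $0$ precisely because $u\in\mathcal{E}(X,\tau)$; this keeps $\sup_X\phi_s$ bounded below, so the decreasing limit is a genuine $\tau$-psh function. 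For the reverse implication one runs the comparison of Corollary \ref{cor: MA inequality} on the contact sets and lets $A\to\infty$ to recover the full mass of $u$. \emph{The delicate point}, and the reason this is the crux rather than a formality, is that $\theta$ is not closed, so its total Monge--Amp\`ere volume genuinely drifts and masses cannot simply be equated; it is here that Corollary \ref{cor: v- positive} (bounding $\int_X(\tau+dd^c\phi_s)^n$ from below) and the passage to the rigid, volume-preserving form $\omega$ are used to pin the drift down and prevent the approximating envelopes from collapsing to $-\infty$.
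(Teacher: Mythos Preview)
Your proposal is essentially the paper's proof, organized around the same cycle $(1)\Rightarrow(3)\Rightarrow(2)\Rightarrow(1)$: the forward ``engine'' for $\theta$ is exactly the paper's argument for $(1)\Rightarrow(3)$ via Theorem~\ref{thm: env}, Corollary~\ref{cor: MA inequality} and $v_-(\theta)>0$; the step $(3)\Rightarrow(2)$ via $P_\omega(Au)\ge P_\theta(Au)$ and the reverse engine for the closed form $\omega$ is the paper's $(3)\Rightarrow(2)$; and your binomial check is the paper's ``trivial'' $(2)\Rightarrow(1)$.

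One caution on presentation: you announce the engine as a two-sided equivalence valid for \emph{both} $\theta$ and $\omega$, but your sketch of the reverse direction (``run Corollary~\ref{cor: MA inequality} on the contact sets and let $A\to\infty$'') does not by itself yield $(3)\Rightarrow(1)$ for a non-closed $\theta$, since Corollary~\ref{cor: MA inequality} only compares masses on contact sets and gives no global monotonicity of the drifting $\theta$-volume. The paper avoids this by routing through $\omega$ and invoking the monotonicity of non-pluripolar masses for closed forms (\cite[Theorem~3.3]{DDL23}); your own argument does the same once you pass to $\omega$, so the circularity is only apparent. Your extra gluing direction (envelope for $\omega$ $\Rightarrow$ (3) via $\psi$) is correct but unnecessary for closing the cycle.
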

\begin{proof}
    We start by proving the implication $1) \Rightarrow 3)$. Assume $u \in \mathcal{E}(X,\theta)$ and set $u_j=\max(u,-j)$, $\varphi_j=P_\theta(Au_j)$. It follows from Theorem \ref{thm: env} that $\theta_{\varphi_j}^n$ is carried by the set $\{\varphi_j = A u_j\}$. Since $\varphi_j \leq A u_j$, we get by Corollary \ref{cor: MA inequality} that
    $$ \theta_{\varphi_j}^n \leq A^n \textit{1}_{\{\varphi_j = A u_j\}} \theta_{A^{-1}\varphi_j}^n \leq A^n \textit{1}_{\{\varphi_j = A u_j\}} \theta_{u_j}^n. $$
    Fix $k \in \mathbb{N}$. If $(\varphi_j)$ decreases to $-\infty$, then there is $j_0$ such that $X = \{\varphi_j \leq -A k\}$ for every $j\geq j_0$. That implies, 
    $$ v_-(\theta) \leq \int_{\{\varphi_j \leq -Ak\}} \theta_{\varphi_j}^n \leq A^n \int_{\{u_j\leq -k\}}  \theta_{u_j}^n \leq A^n\left(\int_X  \theta_{u_j}^n  -\int_{\{u_j>-k\}}  \theta_{u_j}^n\right) . $$
   Hence for $j \geq k$, we have by Theorem \ref{thm: env} that 
    $$ \int_{\{u_j>-k\}}  \theta_{u_j}^n =  \int_{\{u>-k\}}  \theta_{u_k}^n. $$
    That implies
    $$ v_-(\theta) \leq A^n \left(\int_X  \theta_{u_j}^n  -\int_{\{u>-k\}}  \theta_{u_k}^n\right),  $$
    for every $j \geq k$. 
    Letting $j \rightarrow +\infty$ yields
     $$ v_-(\theta) \leq A^n \left(\int_X  \theta_{u}^n - \int_{\{u>-k\}}  \theta_{u_k}^n\right), $$
     because $u\in \mathcal{E}(X,\theta)$. 
     Letting $k \rightarrow +\infty$ implies $v_-(\theta)=0$, a contradiction. That proves $(\varphi_j)$ does not converge uniformly to $-\infty$. Since $\varphi_j \searrow P_\theta(Au)$, we obtain $P_\theta(Au) \in \PSH(X,\theta)$.

     We move on to the proof of $3) \Rightarrow 2)$. 
     Fix $A>1$. Using the fact that  $\theta \leq \omega$, we get that $P_\omega(Au) \geq P_\theta(Au)>-\infty$; and  consequently $P_\omega(Au) \in \PSH(X,\omega)$. Since $P_\omega(Au) \leq Au$ by 
   Theorem \ref{thm: env}, we infer  by \cite[Theorem 3.3]{DDL23} that
     $$ \int_X (\omega+ dd^cu)^n \geq \int_X \omega_{A^{-1}P_\omega(Au)}^n \geq (1-A^{-1})^n \int_X \omega^n. $$
     Letting $A \rightarrow +\infty$ yields $u \in \mathcal{E}(X,\omega)$.

      The implication $2) \Rightarrow 1)$ is trivial. 
\end{proof}
\begin{corollary}\label{cor: P(Au) in full mass class}
     If $u \in \mathcal{E}(X,\theta)$, then $P_\theta(Au) \in \mathcal{E}(X,\theta)$ for every $A\geq 1$. 
\end{corollary}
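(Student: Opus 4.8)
The plan is to reduce the statement to the envelope characterization of the full mass class provided by Proposition~\ref{prop: carc full mass}, which turns membership in $\mathcal{E}(X,\theta)$ into the non-degeneracy of all the envelopes $P_\theta(B\,\cdot\,)$. Write $\varphi = P_\theta(Au)$. Since $u \in \mathcal{E}(X,\theta)$, the implication $1)\Rightarrow 3)$ of that proposition already gives $\varphi \in \PSH(X,\theta)$, so $\varphi$ is a legitimate $\theta$-psh function to which the proposition may again be applied. By the equivalence $1)\Leftrightarrow 3)$ applied to $\varphi$, proving $\varphi \in \mathcal{E}(X,\theta)$ amounts to showing that $P_\theta(B\varphi) \in \PSH(X,\theta)$ for every $B\geq 1$; equivalently, that $B\varphi$ admits a $\theta$-psh minorant not identically $-\infty$.

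Fix $B\geq 1$. Since $u\in\mathcal{E}(X,\theta)$ and $AB\geq 1$, Proposition~\ref{prop: carc full mass} yields $w:=P_\theta(ABu) \in \PSH(X,\theta)$. I claim $w \leq B\varphi$, which finishes the argument, since then $P_\theta(B\varphi)\geq w$ is not identically $-\infty$. To see the claim, first note that $\tfrac1B w = \tfrac1B w + (1-\tfrac1B)\cdot 0$ is a convex combination of the $\theta$-psh functions $w$ and $0$ (here $0\in\PSH(X,\theta)$ because $\theta\geq 0$), so $\tfrac1B w \in \PSH(X,\theta)$. Next, because $ABu$ is upper semicontinuous (being quasi-psh), the regularized envelope obeys $w=P_\theta(ABu)\leq ABu$ \emph{everywhere}, and not merely outside a pluripolar set: the supremum of admissible competitors is pointwise bounded by the upper semicontinuous function $ABu$, hence so is its upper regularization $w$. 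Dividing by $B>0$ gives $\tfrac1B w \leq Au$ on all of $X$, so $\tfrac1B w$ is an admissible competitor in the supremum defining $P_\theta(Au)$, whence $\tfrac1B w \leq P_\theta(Au)=\varphi$, that is $w\leq B\varphi$.

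Having established $P_\theta(B\varphi)\in\PSH(X,\theta)$ for every $B\geq 1$, the equivalence $3)\Rightarrow 1)$ of Proposition~\ref{prop: carc full mass} applied to $\varphi$ yields $\varphi = P_\theta(Au)\in\mathcal{E}(X,\theta)$, as desired. The one point requiring care is the passage from $w\leq ABu$ to the genuine competitor inequality $\tfrac1B w\leq Au$: Theorem~\ref{thm: env} guarantees only a quasi-everywhere domination for a general quasi-continuous argument, and it is precisely the upper semicontinuity of the quasi-psh function $ABu$ that upgrades this to an everywhere inequality and legitimizes $\tfrac1B w$ as a competitor for $P_\theta(Au)$. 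The other small ingredient is the semipositivity $\theta\geq 0$, used to ensure that the rescaled function $\tfrac1B w$ remains $\theta$-psh.
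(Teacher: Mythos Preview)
Your proof is correct and follows essentially the same route as the paper's: both arguments set $\varphi=P_\theta(Au)$, invoke Proposition~\ref{prop: carc full mass} to reduce to showing $P_\theta(B\varphi)\in\PSH(X,\theta)$ for all $B\geq 1$, and then exhibit (a rescaling of) $P_\theta(ABu)$ as a $\theta$-psh minorant of $B\varphi$ via the competitor inequality $B^{-1}P_\theta(ABu)\leq Au$. Your write-up is in fact a bit more careful than the paper's, spelling out why $w\leq ABu$ holds everywhere (upper semicontinuity) and why $B^{-1}w$ stays $\theta$-psh (semipositivity of $\theta$).
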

\begin{proof}
Fix $A\geq 1$ and set $v=P_\theta(Au)$. It follows from the last proposition that $v \in \PSH(X,\theta)$.  By the last proposition, it suffices to prove $P_\theta(tv) \in \PSH(X,\theta)$ for every $t\geq 1$. Since $P_\theta(Atu) \leq Atv$, we get that $tv \geq A^{-1} P_\theta(Atu) \in \PSH(X,\theta)$. Hence, $P_\theta(tv) \in \PSH(X,\theta)$ and the proof is thus completed.   
\end{proof}
\begin{prop}\label{prop : stab maxima} 
    The set $\mathcal{E}(X,\theta)$ is convex. Moreover, if $u \in \mathcal{E}(X,\theta)$ and $v \in \PSH(X,\theta)$ are such that $u \leq v$, then $v \in \mathcal{E}(X,\theta)$.
\end{prop}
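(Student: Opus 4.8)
The plan is to derive both assertions from the equivalence $(1)\Leftrightarrow(3)$ in Proposition~\ref{prop: carc full mass}, which reduces membership in $\mathcal{E}(X,\theta)$ to the non-degeneracy of the envelopes $P_\theta(Au)$, $A\ge 1$.

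For the monotonicity statement I would argue directly from the definition of $P_\theta$. Suppose $u\in\mathcal{E}(X,\theta)$, $v\in\PSH(X,\theta)$ and $u\le v$, and fix $A\ge 1$. Since $Au\le Av$ pointwise, every $\varphi\in\PSH(X,\theta)$ with $\varphi\le Au$ also satisfies $\varphi\le Av$, so passing to suprema and upper semicontinuous regularizations gives $P_\theta(Au)\le P_\theta(Av)$. Because $u\in\mathcal{E}(X,\theta)$, Proposition~\ref{prop: carc full mass} provides $P_\theta(Au)\in\PSH(X,\theta)$; hence $P_\theta(Av)\ge P_\theta(Au)\not\equiv-\infty$ and $P_\theta(Av)\in\PSH(X,\theta)$. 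As $A\ge 1$ is arbitrary, Proposition~\ref{prop: carc full mass} yields $v\in\mathcal{E}(X,\theta)$.

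For convexity, I would take $u_0,u_1\in\mathcal{E}(X,\theta)$, $t\in[0,1]$ and set $u_t=(1-t)u_0+tu_1$, which is $\theta$-psh since $\theta_{u_t}=(1-t)\theta_{u_0}+t\theta_{u_1}\ge 0$. Fixing $A\ge 1$, the natural competitor for $P_\theta(Au_t)$ is
$$ \psi=(1-t)\,P_\theta(Au_0)+t\,P_\theta(Au_1). $$
By Proposition~\ref{prop: carc full mass} each $P_\theta(Au_i)\in\PSH(X,\theta)$, so $\psi\in\PSH(X,\theta)$ and $\psi\not\equiv-\infty$; and by Theorem~\ref{thm: env} we have $P_\theta(Au_i)\le Au_i$ outside a pluripolar set, whence $\psi\le Au_t$ outside a pluripolar set, hence almost everywhere. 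Once $\psi$ is known to be admissible, $P_\theta(Au_t)\ge\psi\not\equiv-\infty$, so $P_\theta(Au_t)\in\PSH(X,\theta)$, and Proposition~\ref{prop: carc full mass} again gives $u_t\in\mathcal{E}(X,\theta)$.

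The step I expect to be the main obstacle is upgrading $\psi\le Au_t$ from holding almost everywhere to holding everywhere, which is exactly what admissibility in the definition of $P_\theta$ demands; the difficulty is that $Au_t$ may be $-\infty$ on a pluripolar set where $\psi$ is finite. I would resolve this via the standard comparison property of quasi-psh functions: if $f,g$ are quasi-psh and $f\le g$ almost everywhere, then $f\le g$ everywhere. This follows because in a coordinate ball $f(x)=\lim_{r\to 0}\frac{1}{|B(x,r)|}\int_{B(x,r)}f\,dV$ by the sub-mean value property, while upper semicontinuity of $g$ gives $\limsup_{r\to 0}\frac{1}{|B(x,r)|}\int_{B(x,r)}g\,dV\le g(x)$; together with $f\le g$ a.e.\ this yields $f(x)\le g(x)$. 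Taking $f=\psi$ and $g=Au_t$ closes the gap and makes $\psi$ an admissible competitor, completing the argument.
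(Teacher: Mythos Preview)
Your proof is correct and takes a genuinely different route from the paper's. The paper invokes the equivalence $(1)\Leftrightarrow(2)$ in Proposition~\ref{prop: carc full mass} to reduce the statement to the case of the Guan--Li form $\omega$, and then simply cites \cite[Proposition~1.6]{GZ07}, whose proof goes through verbatim because $\omega$ has constant Monge--Amp\`ere volume. You instead use the equivalence $(1)\Leftrightarrow(3)$ and argue directly with envelopes, producing explicit competitors for $P_\theta(Au_t)$ and $P_\theta(Av)$. Your approach is more self-contained (no appeal to \cite{GZ07}) and stays entirely within the $\theta$-framework; the paper's approach is shorter on the page but outsources the substance to the K\"ahler literature. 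One minor remark: your justification of the step ``$\psi\le Au_t$ a.e.\ $\Rightarrow$ everywhere'' is slightly imprecise at points where $Au_t(x)=-\infty$, since upper semicontinuity alone does not force the ball averages of $g$ to tend to $-\infty$; the cleanest fix is to note that for any quasi-psh function $g$ one has $g(x)=\lim_{r\to 0}\fint_{B(x,r)}g$ (including the value $-\infty$), which follows from the corresponding fact for psh functions. With that adjustment the argument is complete.
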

\begin{proof}
According to Proposition \ref{prop: carc full mass}, it suffices to prove the result for $\theta$ replaced by $\omega$.  This follows exactly as in the proof of \cite[Proposition 1.6]{GZ07} since $\omega$ satisfies the Guan-Li condition. 
\end{proof}
\begin{corollary}
 Functions in $\mathcal{E}(X,\theta)$ have zero Lelong numbers.
\end{corollary}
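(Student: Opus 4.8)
The plan is to combine the envelope characterization of the full mass class with a uniform upper bound on the Lelong numbers of all $\theta$-psh functions. Recall that for $u \in \PSH(X,\theta)$ and $x \in X$, the Lelong number $\nu(u,x)$ is a local invariant: on a coordinate ball $B=B(x,r_0)$, choosing a smooth $\psi$ with $dd^c\psi\geq -\theta$ makes $w=u+\psi$ plurisubharmonic, and $\nu(u,x)=\nu(w,x)$. The two elementary properties I will exploit are monotonicity (if $w_1\leq w_2$ then $\nu(w_1,x)\geq\nu(w_2,x)$) and homogeneity ($\nu(Au,x)=A\,\nu(u,x)$).

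The key step is the lemma that there is a constant $M\geq 0$, depending only on $X$, $\theta$ and $\omega$, such that $\nu(u,x)\leq M$ for all $u\in\PSH(X,\theta)$ and all $x\in X$. To prove it I would apply the classical bound $\nu(w,x)\leq c_n\, r_0^{-(2n-2)}\int_B dd^c w\wedge\beta^{n-1}$, with $\beta$ the Euclidean form, and then split $dd^c w=\theta_u+(dd^c\psi-\theta)$. The smooth part contributes a fixed constant, while the positive current $\theta_u$ satisfies $\int_B\theta_u\wedge\beta^{n-1}\leq C\int_X\theta_u\wedge\omega^{n-1}$. At this point the Guan-Li condition is essential: it yields $dd^c(\omega^{n-1})=0$, so that Stokes' theorem gives $\int_X dd^c u\wedge\omega^{n-1}=\int_X u\,dd^c(\omega^{n-1})=0$ and hence $\int_X\theta_u\wedge\omega^{n-1}=\int_X\theta\wedge\omega^{n-1}$, a quantity independent of $u$. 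A finite covering of $X$ by such balls then produces the uniform constant $M$.

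Granting the lemma, the conclusion follows quickly. Let $u\in\mathcal{E}(X,\theta)$ and fix $A\geq 1$. By Corollary \ref{cor: P(Au) in full mass class} the envelope $v_A:=P_\theta(Au)$ lies in $\mathcal{E}(X,\theta)\subset\PSH(X,\theta)$, and by Theorem \ref{thm: env} we have $v_A\leq Au$ outside a pluripolar set. Monotonicity and homogeneity of Lelong numbers then give $\nu(v_A,x)\geq\nu(Au,x)=A\,\nu(u,x)$, while the lemma bounds the left-hand side by $M$. Thus $A\,\nu(u,x)\leq M$ for every $A\geq 1$, and letting $A\to+\infty$ forces $\nu(u,x)=0$ at each $x\in X$.

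I expect the principal difficulty to be the uniform Lelong bound, and more precisely the verification that $\int_X\theta_u\wedge\omega^{n-1}$ does not depend on $u$ even for unbounded $u$; this is exactly where the Guan-Li (Gauduchon) hypothesis is used, to kill the term $\int_X dd^c u\wedge\omega^{n-1}$. A minor point to confirm is that knowing $v_A\leq Au$ only off a pluripolar set still yields the Lelong inequality, which is harmless since Lelong numbers are insensitive to modifications on pluripolar (in particular Lebesgue-negligible) sets.
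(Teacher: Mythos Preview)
Your argument is correct and takes a genuinely different route from the paper. The paper's one-line proof passes to $\omega$ via the inclusion $\mathcal{E}(X,\theta)\subset\mathcal{E}(X,\omega)$ (Proposition~\ref{prop: carc full mass}) and then invokes the method of \cite[Corollary~1.8]{GZ07}: assuming $\nu(u,x_0)>0$, one manufactures an $\omega$-psh function with an isolated logarithmic pole at $x_0$ that dominates $u$ near $x_0$, checks directly that such a function fails to lie in $\mathcal{E}(X,\omega)$, and reaches a contradiction with the stability of $\mathcal{E}(X,\omega)$ under majorants (Proposition~\ref{prop : stab maxima}). Your approach instead stays with $\theta$ and combines the envelope characterization $P_\theta(Au)\in\PSH(X,\theta)$ with a uniform Lelong bound coming from the Gauduchon identity $\int_X\theta_u\wedge\omega^{n-1}=\int_X\theta\wedge\omega^{n-1}$; and yes, the Guan--Li hypotheses do give $dd^c(\omega^{n-1})=0$, since $dd^c(\omega^{n-1})=(n-1)(n-2)\,\omega^{n-3}\wedge d\omega\wedge d^c\omega+(n-1)\,\omega^{n-2}\wedge dd^c\omega$. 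This is arguably more self-contained within the paper's own toolkit (it uses Proposition~\ref{prop: carc full mass}(3) and Corollary~\ref{cor: P(Au) in full mass class} directly) and avoids constructing an auxiliary log-pole comparison function; the price is the extra mass-computation lemma. Your residual worry about $v_A\leq Au$ holding only off a pluripolar set is indeed harmless: both sides are locally a psh function plus a smooth one, so averaging over small balls upgrades the almost-everywhere inequality to a pointwise one, after which the Lelong-number comparison is immediate.
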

\begin{proof}
    This follows by the same method as in the proof of \cite[Corollary 1.8]{GZ07} by observing that $\mathcal{E}(X,\theta) \subset \mathcal{E}(X,\omega)$ by Proposition \ref{prop: carc full mass}. 
\end{proof}
\begin{lemma}\label{lem: MA of rooftop env 1}
    Assume $u,v \in \mathcal{E}(X,\theta)$. Then $P(u,v)\in \mathcal{E}(X,\theta)$. If  $\theta_u^n \leq \mu$ and $\theta_v^n\leq \mu$ for some non-pluripolar measure $\mu$, then 
    \[
    (\theta+dd^c P(u,v))^n \leq \mu.
    \]
\end{lemma}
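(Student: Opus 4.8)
The plan is to prove the two assertions separately, the membership $P(u,v)\in\mathcal E(X,\theta)$ being the substantial point and the measure bound following from it by a contact-set computation. For the membership I will use the characterization in Proposition \ref{prop: carc full mass}, namely that $w\in\PSH(X,\theta)$ lies in $\mathcal E(X,\theta)$ if and only if $P_\theta(Aw)\in\PSH(X,\theta)$ for every $A\ge1$. Since $\mathcal E(X,\theta)$ is invariant under adding constants and $P(u-C,v-C)=P(u,v)-C$ for any constant $C$, I may first subtract the common constant $C=\max(\sup_X u,\sup_X v)$ to reduce to the normalized case $u,v\le0$; the conclusion for the original pair then follows by undoing this shift.

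The key elementary inequality is that, for every $A\ge1$, the function $g_A:=\tfrac{1}{2A}\bigl(P_\theta(2Au)+P_\theta(2Av)\bigr)$ is $\theta$-psh and satisfies $g_A\le\min(u,v)$. Indeed $P_\theta(2Au),P_\theta(2Av)\in\PSH(X,\theta)$ precisely because $u,v\in\mathcal E(X,\theta)$ (this is where the full-mass hypothesis enters, via Proposition \ref{prop: carc full mass}(3)); they are $\le0$ by the normalization, and $P_\theta(2Au)\le2Au$ gives $u\ge\tfrac{1}{2A}P_\theta(2Au)=:a\le0$, and likewise $v\ge b\le0$. Then $\min(u,v)\ge\min(a,b)\ge a+b=g_A$ since $\min(a,b)\ge a+b$ when $a,b\le0$, while $g_A$ is $\tfrac1A\theta$-psh, hence $\theta$-psh. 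Consequently $P(u,v)=P_\theta(\min(u,v))\ge g_A$; for $A=1$ this already shows $P(u,v)\not\equiv-\infty$, so $P(u,v)\in\PSH(X,\theta)$. Multiplying by $A$ gives $A\,P(u,v)\ge\tfrac12\bigl(P_\theta(2Au)+P_\theta(2Av)\bigr)$, whose right-hand side is a genuine $\theta$-psh function, so $P_\theta(A\,P(u,v))$ is bounded below by it and is not $\equiv-\infty$. As $A\ge1$ is arbitrary, Proposition \ref{prop: carc full mass}(3) yields $P(u,v)\in\mathcal E(X,\theta)$.

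For the measure bound I drop the normalization and work with the given $u,v$. Since $\min(u,v)$ is quasi-continuous (being the minimum of two quasi-continuous functions) and $P(u,v)=P_\theta(\min(u,v))\in\PSH(X,\theta)$ by the first part, Theorem \ref{thm: env} applies and shows that $\theta_{P(u,v)}^n$ is carried by the contact set $\{P(u,v)=\min(u,v)\}$. I then split this set along $\{u\le v\}$ and $\{u>v\}$: on the first piece $\min(u,v)=u$, so the contact set is contained in $\{P(u,v)=u\}$, and since $P(u,v)\le u$ the second inequality of Corollary \ref{cor: MA inequality} gives $\mathbf{1}_{\{P(u,v)=u\}}\theta_{P(u,v)}^n\le\mathbf{1}_{\{P(u,v)=u\}}\theta_u^n\le\mu$; symmetrically on $\{u>v\}$ one obtains the bound by $\theta_v^n\le\mu$. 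Adding the two disjoint pieces and using that $\theta_{P(u,v)}^n$ lives on their union gives $\theta_{P(u,v)}^n\le\mathbf{1}_{\{u\le v\}}\mu+\mathbf{1}_{\{u>v\}}\mu=\mu$.

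The only real obstacle is the first part: a priori $P(u,v)\le\min(u,v)$ is smaller than both $u$ and $v$, and in the non-pluripolar setting the total mass need not increase as the potential decreases, so full mass is not automatic. The scaling inequality $P(u,v)\ge g_A$ is exactly what converts the full-mass hypotheses on $u$ and $v$—encoded through $P_\theta(2Au),P_\theta(2Av)\in\PSH(X,\theta)$—into the full-mass property of the envelope via criterion (3). Once membership is secured, the measure estimate of the second part is a routine application of the envelope theorem and the maximum principle and presents no further difficulty.
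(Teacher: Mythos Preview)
Your proof is correct and follows essentially the same path as the paper's: both hinge on the inequality $u+v\le\min(u,v)$ for $u,v\le0$ to bound $P(u,v)$ from below by a $\theta$-psh function built from $P_\theta(2u),P_\theta(2v)$ (or their scaled versions), and both handle the measure estimate by the same contact-set splitting via Theorem~\ref{thm: env} and Corollary~\ref{cor: MA inequality}. The only organizational difference is that the paper packages the first step through $h=(u+v)/2\in\mathcal E(X,\theta)$, Corollary~\ref{cor: P(Au) in full mass class}, and upward closure (Proposition~\ref{prop : stab maxima}), whereas you inline that argument and verify criterion~(3) of Proposition~\ref{prop: carc full mass} directly.
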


\begin{proof}
   Let us denote by $\varphi = P_\theta(\min(u,v))$ and $h=(u+v)/2$. The function $h$ belongs to $\mathcal{E}(X,\theta)$ by Proposition \ref{prop : stab maxima}, and we have $P_\theta(2h) \leq \varphi$ by Theorem \ref{thm: env}. Corollary \ref{cor: P(Au) in full mass class} implies $P_\theta(2h) \in \mathcal{E}(X,\theta)$, and hence we obtain $\varphi \in \mathcal{E}(X,\theta)$ by the last proposition.

   According to Theorem \ref{thm: env}, the measure $\theta_\varphi^n$ is carried by the set 
   $$ \{\varphi = \min(u,v)\} = \{\varphi = u<v \} \cup \{\varphi = v\}.  $$
   Since $\varphi \leq u$, it follows from Corollary \ref{cor: MA inequality} that
   $$ \textit{1}_{\{\varphi = u<v \}} \theta_\varphi^n \leq \textit{1}_{\{\varphi = u<v \}} \theta_u^n \leq \textit{1}_{\{\varphi = u<v \}} \mu. $$
   We get similarly that 
   $$  \textit{1}_{\{\varphi = v \}} \theta_\varphi^n \leq \textit{1}_{\{\varphi = v \}} \mu. $$
   Therefore, $\theta_{P_{\theta}(u,v)}^n \leq \mu$ as desired. 
\end{proof}

\begin{lemma}\label{lem: MA of rooftop env 2}
    Assume $u,v \in \mathcal{E}(X,\theta)$, $h\in \PSH(X,\theta)$. Then   
    \[
    \int_X |e^{P(u,v)} -e^h| \theta_{P(u,v)}^n \leq \int_X |e^u -e^h| \theta_u^n+ \int_X |e^v -e^h| \theta_v^n. 
    \]
\end{lemma}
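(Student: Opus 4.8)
The plan is to localize the measure $\theta_{P(u,v)}^n$ on the contact set, split that set according to whether $P(u,v)$ agrees with $u$ or with $v$, and then compare Monge-Amp\`ere measures via Corollary \ref{cor: MA inequality}. Write $\varphi=P(u,v)=P_\theta(\min(u,v))$. By Lemma \ref{lem: MA of rooftop env 1} we already know $\varphi\in\mathcal{E}(X,\theta)$, in particular $\varphi\in\PSH(X,\theta)$, and $\min(u,v)$ is quasi-continuous as a minimum of quasi-continuous functions. Hence Theorem \ref{thm: env} applies and gives $\int_{\{\varphi<\min(u,v)\}}\theta_\varphi^n=0$; since $\theta_\varphi^n$ charges no pluripolar set and $\varphi\leq\min(u,v)$ off a pluripolar set, the measure $\theta_\varphi^n$ is carried by the contact set $\{\varphi=\min(u,v)\}$.

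Next I would decompose this set, exactly as in the proof of Lemma \ref{lem: MA of rooftop env 1}, as the disjoint union $\{\varphi=u<v\}\cup\{\varphi=v\}$ (disjointness and coverage hold off a pluripolar set, since $\varphi=v$ forces $\min(u,v)=v$ and $\varphi=u<v$ forces $\min(u,v)=u$). On $\{\varphi=u<v\}$ the potential $\varphi$ coincides \emph{pointwise} with $u$, so $e^\varphi=e^u$ there and the weight $|e^\varphi-e^h|$ equals $|e^u-e^h|$; likewise $e^\varphi=e^v$ on $\{\varphi=v\}$. To replace $\theta_\varphi^n$ by $\theta_u^n$ and $\theta_v^n$ I invoke the second inequality of Corollary \ref{cor: MA inequality}: from $\varphi\leq u$ we get $\textit{1}_{\{\varphi=u\}}\theta_\varphi^n\leq\textit{1}_{\{\varphi=u\}}\theta_u^n$, and from $\varphi\leq v$ we get $\textit{1}_{\{\varphi=v\}}\theta_\varphi^n\leq\textit{1}_{\{\varphi=v\}}\theta_v^n$. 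Since the weights $|e^u-e^h|$ and $|e^v-e^h|$ are nonnegative and bounded, integrating these measure inequalities over the respective pieces and then enlarging the domain to all of $X$ yields
\[
\int_{\{\varphi=u<v\}}|e^\varphi-e^h|\,\theta_\varphi^n\leq\int_X|e^u-e^h|\,\theta_u^n,\qquad
\int_{\{\varphi=v\}}|e^\varphi-e^h|\,\theta_\varphi^n\leq\int_X|e^v-e^h|\,\theta_v^n.
\]
Adding the two and using that $\theta_\varphi^n$ lives on $\{\varphi=u<v\}\cup\{\varphi=v\}$ gives the claimed inequality.

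The computation is essentially bookkeeping, so I expect the only delicate points to be the localization and the decomposition. I must make sure that $\{\varphi=u<v\}$ and $\{\varphi=v\}$ are genuinely disjoint and cover the contact set off a pluripolar set, so that no mass is lost or double counted, and that I use the \emph{pointwise} identity $e^\varphi=e^u$ on $\{\varphi=u\}$ rather than merely an almost-everywhere statement. The main, if mild, obstacle is checking that Theorem \ref{thm: env} is genuinely applicable, i.e. that $\min(u,v)$ is quasi-continuous and that $P(u,v)\not\equiv-\infty$; both are already guaranteed by Lemma \ref{lem: MA of rooftop env 1}, so in the end the argument reduces to combining the envelope theorem with the contact-set comparison of Corollary \ref{cor: MA inequality}.
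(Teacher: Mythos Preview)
Your argument is correct and follows essentially the same route as the paper: localize $\theta_\varphi^n$ on the contact set via Theorem \ref{thm: env}, split according to whether $\varphi$ equals $u$ or $v$, and compare with $\theta_u^n$, $\theta_v^n$ using Corollary \ref{cor: MA inequality}. The only cosmetic difference is that you use the disjoint decomposition $\{\varphi=u<v\}\cup\{\varphi=v\}$ (as in the proof of Lemma \ref{lem: MA of rooftop env 1}), whereas the paper uses the possibly overlapping cover $\{\varphi=u\}\cup\{\varphi=v\}$, absorbing any overcounting into the first inequality; both lead to the same bound.
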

\begin{proof}
  Set $\varphi=P(u,v)$. It follows from Theorem \ref{thm: env} that $\theta_\varphi^n$ is carried by the set $\{\varphi= \min(u,v)\}$ and that $\varphi \leq \min(u,v)$. Hence, we obtain by Corollary \ref{cor: MA inequality} that 
  \begin{align*}
        \int_X |e^\varphi -e^h| \theta_\varphi^n &\leq  \int_{\{\varphi=u\}} |e^\varphi -e^h| \theta_\varphi^n +  \int_{\{\varphi=v\}} |e^\varphi -e^h| \theta_\varphi^n  
      \\
      &\leq  \int_{\{\varphi=u\}} |e^\varphi -e^h| \theta_u^n +  \int_{\{\varphi=v\}} |e^\varphi -e^h| \theta_v^n  \\
        &\leq \int_X |e^u -e^h| \theta_u^n+ \int_X |e^v -e^h| \theta_v^n. 
  \end{align*}
\end{proof}
\subsection{The domination principle}
The following result, called the domination principle, is a consequence of Theorem \ref{thm: env}. The proof is a modification of the one of \cite[Proposition 2.8]{GL22} which handles the case of bounded $\theta$-psh functions. 
\begin{theorem}[Domination principle]\label{thm : dom prin}
Let $c \in [0,1)$, and let  $u$, $v \in \mathcal{E}(X,\theta)$. If 
$$ \theta_u^n \leq c\theta_v^n $$
on $\{u<v\}$, then $u \geq v$.
\end{theorem}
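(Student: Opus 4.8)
The plan is to show that $\{u<v\}$ is negligible: since $u$ and $v$ are upper semicontinuous $\theta$-psh functions, the inequality $u\geq v$ off a Lebesgue-null set already forces $u\geq v$ everywhere, so it suffices to work at the level of Monge-Amp\`ere mass. The engine of the argument is the rooftop envelope $w=P(u,v)=P_\theta(\min(u,v))$, which belongs to $\mathcal{E}(X,\theta)$ by Lemma \ref{lem: MA of rooftop env 1}; by Theorem \ref{thm: env} its measure $\theta_w^n$ is carried by the contact set $\{w=\min(u,v)\}$, which I would decompose into $\{w=u<v\}$, $\{w=v<u\}$ and $\{w=u=v\}$, noting that on $\{w=u\}$ one has $\mathbf{1}_{\{w=u\}}\theta_w^n\leq \mathbf{1}_{\{w=u\}}\theta_u^n$ and on $\{w=v\}$ one has $\mathbf{1}_{\{w=v\}}\theta_w^n\leq \mathbf{1}_{\{w=v\}}\theta_v^n$ by Corollary \ref{cor: MA inequality}.

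The decisive contraction comes from the comparison principle, which is available precisely because $\omega$ satisfies the Guan-Li condition (see \cite{Chi16}): it yields $\int_{\{u<v\}}\theta_v^n\leq \int_{\{u<v\}}\theta_u^n$. Combining this with the hypothesis $\theta_u^n\leq c\,\theta_v^n$ on $\{u<v\}$ gives $\int_{\{u<v\}}\theta_v^n\leq c\int_{\{u<v\}}\theta_v^n$, and since $c<1$ this forces the crucial vanishing $\theta_v^n(\{u<v\})=0$.

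It then remains to upgrade $\theta_v^n(\{u<v\})=0$ into the genuine inequality $u\geq v$, which is the content of the bounded domination principle \cite[Proposition 2.8]{GL22}. I would transplant its envelope argument, combining the contact-set description of $\theta_w^n$ (Theorem \ref{thm: env}) with Corollary \ref{cor: MA inequality} to rule out that $\{u<v\}$ is non-negligible once $\theta_v^n(\{u<v\})=0$ is known. To pass from bounded potentials to $\mathcal{E}(X,\theta)$ I would truncate, setting $u_j=\max(u,-j)$ and $v_j=\max(v,-j)$, run the argument for the bounded pairs, and let $j\to\infty$; here Lemma \ref{lem: MA of rooftop env 1} is used to keep the truncated rooftop envelopes in the class with their measures dominated by a fixed non-pluripolar measure, which is what makes the contact-set decomposition stable under the limit.

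The step I expect to be hardest is the interplay with the total Monge-Amp\`ere volume. Unlike the K\"ahler setting, $\int_X\theta_\phi^n$ is not constant over $\mathcal{E}(X,\theta)$, so the volume-conservation bookkeeping that usually closes such arguments is unavailable; this is exactly why the comparison principle (equivalently, the Guan-Li condition) must enter as a genuine external input rather than being read off from equalities of total masses. The second delicate point is the truncation limit: one must ensure that no Monge-Amp\`ere mass escapes to $\{u=-\infty\}\cup\{v=-\infty\}$, and it is precisely membership in $\mathcal{E}(X,\theta)$ together with the domination furnished by Lemma \ref{lem: MA of rooftop env 1} that rules this out.
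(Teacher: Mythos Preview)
Your proposal has a genuine gap at the ``decisive contraction'' step. The comparison inequality $\int_{\{u<v\}}\theta_v^n\leq \int_{\{u<v\}}\theta_u^n$ that you invoke is \emph{not} available for $\theta$-psh functions here. Chiose's equivalence \cite{Chi16} says that, for a given hermitian form, the Guan--Li condition is equivalent to the comparison principle \emph{for that same form}; the hypothesis in this paper is on $\omega$, not on $\theta$, and $\theta$ is merely semipositive and big with no curvature assumption. You yourself observe in your final paragraph that $\int_X\theta_\phi^n$ need not be constant over $\mathcal{E}(X,\theta)$---but the comparison principle you want is essentially equivalent to (and in practice proved from) that volume invariance, so your two remarks are in tension. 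Nor can one recover a $\theta$-comparison from the $\omega$-comparison: expanding $\omega_\phi^n=\sum_k\binom{n}{k}\theta_\phi^k\wedge(\omega-\theta)^{n-k}$ mixes all the cross terms and does not isolate the top power. Without this step your argument never reaches $\theta_v^n(\{u<v\})=0$, and the remainder of the sketch (truncation plus the bounded domination principle of \cite{GL22}) does not repair the lacuna.

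The paper proceeds quite differently and never uses a comparison principle for $\theta$. After reducing to $u\leq v$, it introduces for large $b>1$ the envelope $u_b=P_\theta\bigl(bu-(b-1)v\bigr)$, shows $u_b\in\mathcal{E}(X,\theta)$ via Corollary~\ref{cor: P(Au) in full mass class}, and uses Theorem~\ref{thm: env} together with Corollary~\ref{cor: MA inequality} on the contact set to deduce that once $(1-b^{-1})^n>c$ the measure $\theta_{u_b}^n$ is carried by $\{u=v\}$ and dominated there by $\theta_v^n$. This yields $\int_X h(|u_b|)\,\theta_{u_b}^n\leq \int_X h(|v|)\,\theta_v^n<\infty$ for a suitable weight $h$, and combined with $v_-(\theta)>0$ (Corollary~\ref{cor: v- positive}) this bounds $\sup_X u_b$ uniformly in $b$. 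Passing to an $L^1$-limit and using $u_b\leq v-ab$ on $\{u<v-a\}$ then forces each $\{u<v-a\}$ to be Lebesgue-null. The role your sketch assigns to the comparison principle is thus played by a purely pointwise contact-set inequality together with the strict positivity of $v_-(\theta)$; no global mass identity is needed.
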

\begin{proof}
We first notice that, by Theorem \ref{thm: env}, we have  
 $$ c(\theta + dd^c \max(u,v))^n = c \theta_v^n \geq \theta_u^n $$
 on $\{u <v\}$. Hence, up to replacing $v$ by $\max(u,v)$, one can assume $u \leq v$. Our objective now is to prove that $u=v$.     
    
    For $b > 1$, we define 
    $$ u_b = P_\theta(bu - (b-1)v). $$
    We have $bu - (b-1) v \leq v$ for every $b$. Hence $u_b \in \PSH(X,\theta)$ or $u_b=-\infty$. We are going to prove $u_b \in \mathcal{E}(X,\theta)$ for every $b$. 
     The fact that $u \in \mathcal{E}(X,\theta)$ implies  $P_\theta(bu) \in \mathcal{E}(X,\theta)$ by Corollary \ref{cor: P(Au) in full mass class}.  We have 
     $$P_\theta(bu) \leq bu \leq bu -(b-1)(v - \sup_X v). $$ 
    Hence $u_b \geq P_\theta(bu) - (b-1) \sup_X v \in \mathcal{E}(X,\theta)$, and consequently $u_b \in \mathcal{E}(X,\theta)$ by Proposition \ref{prop : stab maxima}.
   Let us denote by $D:= \{u_b=bu - (b-1)v\}$. According to 
    Theorem \ref{thm: env}, the measure $\theta_{u_b}^n$ is carried by $D$ and $u_b \leq b u - (b-1)v$ outside a pluripolar set. Which implies $b^{-1}u_b + (1-b^{-1})v \leq u_b$ almost everywhere, and hence everywhere because these are quasi-psh functions. 
   It thus follows from Corollary \ref{cor: MA inequality} that  
    $$ \textit{1}_{D} (\theta + dd^c (b^{-1} u_b + (1 - b^{-1})v))^n \leq \textit{1}_D \theta_u^n. $$
    Hence, we get by the hypothesis that
    $$ \textit{1}_{D\cap \{u<v\}} b^{-n}  \theta_{u_b}^n + \textit{1}_{D\cap \{u<v\}}(1 - b^{-1})^n\theta_v^n \leq \textit{1}_{D\{u<v\}} c\theta_v^n. $$
    Taking $b$ large enough so that $(1 - b^{-1})^n > c$, we obtain that  $\theta_{u_b}^n$  is carried by  $D \cap \{u=v\}$.  On this set we have $u_b=v$ and since $u_b \leq bu -(b-1)v \leq v$ on $X$, we obtain again by Corollary \ref{cor: MA inequality}  that 
    $$ \textit{1}_{\{u=v\}\cap D} \theta_{u_b}^n \leq \textit{1}_{\{u=v\}\cap D} \theta_v^n.  $$
  Since the measure $(\theta + dd^cv)^n$ does not charge the pluripolar set $\{v=-\infty\}$, one can construct an increasing function $h : \mathbb{R}_+ \rightarrow \mathbb{R}_+$ such that $h(+\infty)=+\infty$ and $h(|v|) \in L^1(\theta_v^n)$.  That gives
    \begin{align*}
        \int_X h(|u_b|) \theta_{u_b}^n &= \int_{\{u=v\}\cap D} h(|v|) \theta_{u_b}^n \\
        &\leq \int_X h(|v|) \theta_v^n< +\infty.
    \end{align*}
    If $(\sup_X u_b)_b$ has a subsequence that converges to $-\infty$, then for every positive scalar $\alpha$, one can find $b$ such that $\sup_X u_b \leq -\alpha$. That implies 
    $$   \int_X h(|u_b|)\theta_{u_b}^n \geq h(\alpha) v_-(\theta). $$
    This is impossible because 
    $$ \sup_{b\geq 1} \int_X h(|u_b|)\theta_{u_b}^n < \infty,  $$
    and $v_-(\theta)>0$ by Corollary \ref{cor: v- positive}. 
    We conclude that the sequence $(\sup_X u_b)_b$ is uniformly bounded. Therefore, $(u_b)_b$   has a subsequence $(u_{b_j})_j$ that converges in $L^1(X)$ to a function $w \in \PSH(X,\theta)$.

    Fix $a>0$. On $\{u<v-a\}$, we have $u_b \leq v - ab$. Hence   
    $$  \int_{\{u<v-a\}} w \omega^n = \lim_{j\rightarrow +\infty} \int_{\{u<v-a\}} u_{b_j} \omega^n \leq \lim_{j\rightarrow+\infty} -ab_j \int_{\{u<v-a\}} \omega^n + \int_{\{u<v-a\}} v \omega^n. $$
   Since $w \in L^1(X)$, we obtain that  
    $$ \int_{\{u<v-a\}} \omega^n = 0, $$
     and hence $u \geq v - a$  because $u$ and $v$ are quasi-psh. This holds for every positive $a$. Therefore, $u=v$ and the result is thus proved.
\end{proof}
The following corollaries are direct consequences of the domination principle. 
\begin{corollary}\label{cor: uniqness MA-sol}
   Fix $\lambda>0$. If $u_1$, $u_2 \in \mathcal{E}(X,\theta)$ are such that
     $$ e^{-\lambda u_1}(\theta + dd^c u_1)^n  \leq  e^{-\lambda u_2}(\theta + dd^c u_2)^n. $$
     Then $u_1 \geq u_2$.
\end{corollary}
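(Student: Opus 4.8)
The plan is to deduce the statement directly from the domination principle (Theorem \ref{thm : dom prin}), the only real work being to manufacture a uniform constant $c \in [0,1)$ out of the exponential weights. First I would recast the hypothesis as a pointwise comparison of Monge-Amp\`ere measures. On the set $\{u_1 < u_2\}$ one has $u_1 - u_2 < 0$, so multiplying the assumed inequality $e^{-\lambda u_1}\theta_{u_1}^n \leq e^{-\lambda u_2}\theta_{u_2}^n$ by $e^{\lambda u_1}$ gives, on $\{u_1 < u_2\}$,
$$ \theta_{u_1}^n \leq e^{\lambda(u_1 - u_2)}\theta_{u_2}^n. $$
These manipulations are legitimate as inequalities of measures because $\theta_{u_i}^n$ does not charge the pluripolar sets $\{u_i = -\infty\}$, so the weights $e^{-\lambda u_i}$ are finite $\theta_{u_i}^n$-almost everywhere.

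The difficulty is that the factor $e^{\lambda(u_1-u_2)}$, while strictly less than $1$ on $\{u_1<u_2\}$, is not bounded away from $1$ near the boundary where $u_1 - u_2 \to 0^-$; hence it cannot serve as the constant $c$ required by Theorem \ref{thm : dom prin}. To remedy this I would introduce a uniform gap: fix $\varepsilon > 0$ and set $v = u_2 - \varepsilon$. Then $v \in \mathcal{E}(X,\theta)$ with $\theta_v^n = \theta_{u_2}^n$, and on $\{u_1 < v\} = \{u_1 < u_2 - \varepsilon\}$ we have $u_1 - u_2 < -\varepsilon$, whence
$$ \theta_{u_1}^n \leq e^{\lambda(u_1-u_2)}\theta_{u_2}^n \leq e^{-\lambda \varepsilon}\theta_v^n. $$
Now $c := e^{-\lambda\varepsilon}$ lies in $[0,1)$, so Theorem \ref{thm : dom prin} applies to the pair $u_1$ and $v$ and yields $u_1 \geq v = u_2 - \varepsilon$.

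Finally I would let $\varepsilon \to 0$ to conclude $u_1 \geq u_2$. I expect the only genuine subtlety to be the bookkeeping of the exponential weights on pluripolar sets in the first step; once the hypothesis is rewritten as a clean pointwise bound on $\{u_1 < u_2 - \varepsilon\}$, the shift-and-limit argument is routine and the substantive content is entirely absorbed into Theorem \ref{thm : dom prin}. It is worth recording that $v = u_2 - \varepsilon$ does belong to $\mathcal{E}(X,\theta)$, which is immediate since adding a constant alters neither the $\theta$-psh property nor the non-pluripolar Monge-Amp\`ere measure.
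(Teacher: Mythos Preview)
Your proposal is correct and follows essentially the same argument as the paper: shift $u_2$ by a small constant $\varepsilon>0$ (the paper uses $a$), observe that on $\{u_1<u_2-\varepsilon\}$ the hypothesis yields $\theta_{u_1}^n \leq e^{-\lambda\varepsilon}\theta_{u_2}^n$ with $e^{-\lambda\varepsilon}\in[0,1)$, apply Theorem~\ref{thm : dom prin}, and let $\varepsilon\to 0$. Your additional remarks on pluripolar sets and the membership $u_2-\varepsilon\in\mathcal{E}(X,\theta)$ are sound but not strictly needed beyond what the paper states.
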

\begin{proof}
    For $a>0$, we have  
 $$
 (\theta + dd^c u_1)^n \leq e^{\lambda (u_1-u_2)}  (\theta + dd^c u_2)^n \leq e^{-\lambda a} (\theta + dd^c u_2)^n, 
 $$
on $\{u_1<u_2-a\}$. Theorem \ref{thm : dom prin} implies $u_1\geq u_2-a$. Since $a$ is arbitrary, letting $a\rightarrow 0$ yields  $u_1\geq u_2$. 
\end{proof}
\begin{corollary}\label{cor: uniq cst c}
    If $u$, $v \in \mathcal{E}(X,\theta)$ are such that 
    $$ (\theta + dd^c u)^n \leq c (\theta + dd^c u)^n, $$
    for some positive constant $c$. Then $c \geq 1$. 
\end{corollary}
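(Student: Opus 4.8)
The plan is to argue by contradiction and to exploit the fact that the hypothesis $\theta_u^n \leq c\,\theta_v^n$ is assumed on all of $X$, which frees me to feed \emph{shifted} competitors into the domination principle rather than comparing masses. So suppose $c \in [0,1)$. The key observation is that translating $v$ by a positive constant changes neither its Monge-Amp\`ere measure nor its membership in the full mass class: for any $s>0$ one has $\theta_{v+s}^n = \theta_v^n$ since $dd^c(v+s)=dd^c v$, and because $v+s \geq v$ with $v \in \mathcal{E}(X,\theta)$, Proposition \ref{prop : stab maxima} gives $v+s \in \mathcal{E}(X,\theta)$.

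With this in hand I would rewrite the hypothesis as $\theta_u^n \leq c\,\theta_{v+s}^n$ on $X$, which in particular holds on the set $\{u < v+s\}$. Applying the domination principle (Theorem \ref{thm : dom prin}) to the pair $u,\,v+s \in \mathcal{E}(X,\theta)$ with the admissible constant $c \in [0,1)$ then yields $u \geq v+s$. Since $s>0$ is arbitrary, letting $s \to +\infty$ forces $u \equiv +\infty$, which contradicts $u \in \PSH(X,\theta) \subset L^1(X)$. This contradiction gives $c \geq 1$.

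I expect the only point requiring care to be the verification that the shifted function $v+s$ is a legitimate competitor in the domination principle, namely that it remains in $\mathcal{E}(X,\theta)$ and that its Monge-Amp\`ere measure is unchanged; both are immediate from the translation invariance of $dd^c$ together with Proposition \ref{prop : stab maxima}. It is worth noting that no comparison between the total masses $\int_X \theta_u^n$ and $\int_X \theta_v^n$ is used, which is precisely what makes the shift argument preferable here: when $\theta$ is not closed the total Monge-Amp\`ere mass need not be constant on $\mathcal{E}(X,\theta)$, so the naive approach of integrating the inequality and cancelling masses is unavailable.
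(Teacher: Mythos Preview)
Your proof is correct and follows essentially the same argument as the paper: assume $c<1$, shift $v$ by an arbitrary constant, apply the domination principle (Theorem \ref{thm : dom prin}) to conclude $u \geq v+s$ for all $s$, and derive a contradiction. The paper's version is slightly terser (it omits the justification that $v+s\in\mathcal{E}(X,\theta)$ and $\theta_{v+s}^n=\theta_v^n$, which you spell out), but the strategy is identical.
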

\begin{proof}
   Assume in contradiction $c<1$. Then, for every $a \in \mathbb{R}$, we have
   $$ \theta_u^n \leq c(\theta + dd^c(v+a))^n $$
  on X, and hence in particular on $\{u<v+a\}$. Theorem \ref{thm : dom prin} implies $u \geq v+a$. This holds for every $a\in \mathbb{R}$, a contradiction. We infer that $c\geq 1$. 
\end{proof}

Given $u \in \PSH(X,\theta)$, the envelope of singularity types of $u$  is the $\theta$-psh function defined by 
$$ P_{\theta}[u] = \left( \lim_{t \rightarrow +\infty} P_\theta(u+t,0)\right)^*, $$
see \cite{RWN14}. 
The following result provides a second characterization of the full mass class in terms of the envelope of singularity types. 
%The domination principle  that was introduced by Ross and Witt Nystr\"om in . It is defined by 
%$$ P_{\theta}[u] = \left( \lim_{t \rightarrow +\infty} P_\theta(u+t,0)\right)^*, $$
%for every $u \in \PSH(X,\theta)$.  
\begin{theorem}\label{thm: env of sing type}
    A function $u \in \PSH(X,\theta)$ belongs to the full mass class $\mathcal{E}(X,\theta)$ if and only if $P_\theta[u]=0$.
\end{theorem}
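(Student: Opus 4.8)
The plan is to normalize $\sup_X u = 0$, so that $u \le 0$ and $0 \in \PSH(X,\theta)\cap L^\infty \subset \mathcal{E}(X,\theta)$. Writing $w_t := P_\theta(u+t,0)$ and $w := P_\theta[u] = (\lim_{t} w_t)^*$, one checks at once that $u \le w_t \nearrow w \le 0$: indeed $u \le \min(u+t,0)$ forces $w_t \ge u$, while $\min(u+t,0)\le 0$ forces $w_t \le 0$. In particular $w \in \PSH(X,\theta)$, and the whole statement reduces to comparing $w$ with $0$. I will treat the two implications separately, the common engine being Theorem~\ref{thm: env} (the Monge--Amp\`ere mass of an envelope sits on its contact set) together with Corollary~\ref{cor: MA inequality}.

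For the implication $u \in \mathcal{E}(X,\theta) \Rightarrow w = 0$, the first step is to show that $\theta_w^n$ is carried by $\{w=0\}$, and this I would prove for an arbitrary $u$ (no full-mass hypothesis). By Theorem~\ref{thm: env}, $\theta_{w_t}^n$ lives on $\{w_t = \min(u+t,0)\}$; on the open set $\{w_t<0\}$ this contact forces $w_t = u+t$ with $u<-t$, so Corollary~\ref{cor: MA inequality} applied to $w_t \le u+t$ yields
$$ \mathbf{1}_{\{w_t<0\}}\theta_{w_t}^n \le \mathbf{1}_{\{u<-t\}}\theta_u^n, \qquad \theta_{w_t}^n(\{w_t<0\}) \le \theta_u^n(\{u<-t\}) \longrightarrow 0 \quad (t\to\infty), $$
the limit being $0$ because $\theta_u^n$ is a finite measure not charging the pluripolar set $\{u=-\infty\}$. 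I would then propagate this to the limit by truncation: for fixed $s$ the bounded functions $\max(w_t,-s)$ increase to $\max(w,-s)$, so the Bedford--Taylor convergence theorem and the plurifine identity of Theorem~\ref{thm: plurifine property} give, on the open set $\{w<0\}\subset\{w_t<0\}$, the bound $\theta_{\max(w,-s)}^n(\{w<0\}) \le \theta^n(\{w\le -s\})$; letting $s\to\infty$ and using that the smooth form $\theta^n$ gives no mass to $\{w=-\infty\}$ yields $\theta_w^n(\{w<0\})=0$. Finally, since $u\in\mathcal{E}(X,\theta)$ and $u\le w$, Proposition~\ref{prop : stab maxima} gives $w\in\mathcal{E}(X,\theta)$; as $\theta_w^n = 0 \le \tfrac12\,\theta_0^n$ on $\{w<0\}$, the domination principle (Theorem~\ref{thm : dom prin} with $c=\tfrac12$, applied to $w$ and $0$) forces $w\ge 0$, hence $w=0$.

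For the converse $w=0 \Rightarrow u\in\mathcal{E}(X,\theta)$ I would argue by contradiction through the $\omega$-theory. Since $\theta\le\omega$ we have $P_\theta(u+t,0)\le P_\omega(u+t,0)\le 0$, so $w=0$ squeezes $v_t := P_\omega(u+t,0)\nearrow 0$; by Proposition~\ref{prop: carc full mass} it suffices to show $u\in\mathcal{E}(X,\omega)$. Suppose not. By the characterization of Proposition~\ref{prop: carc full mass} (applied with the positive form $\omega$ in place of $\theta$) there is $A_0>1$ with $P_\omega(A_0u)\equiv -\infty$; and since $v_t\le u+t$ gives $u\ge v_t-t$, any competitor for $P_\omega(A_0 v_t)$ shifted by $-A_0 t$ competes for $P_\omega(A_0u)$, so $P_\omega(A_0 v_t)\equiv -\infty$ for every $t$. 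Now I would reproduce the mass estimate from the proof of $1)\Rightarrow 3)$ in Proposition~\ref{prop: carc full mass}, applied to $v_t$: using Theorem~\ref{thm: env}, Corollary~\ref{cor: MA inequality} and the Guan--Li identity $\int_X \omega_{\max(v_t,-j)}^n = \int_X\omega^n$ for the bounded truncations, one obtains
$$ v_-(\omega) \le A_0^{\,n}\Big(\int_X\omega^n - \int_X \omega_{v_t}^n\Big) \qquad\text{for every } t, $$
so the mass loss of $v_t$ stays bounded below by $v_-(\omega)/A_0^{\,n}>0$ (recall $v_-(\omega)>0$, cf. Corollary~\ref{cor: v- positive}). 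On the other hand $v_t\nearrow 0$, and a Bedford--Taylor truncation argument (the bounded $\max(v_t,-k)$ increase to $0$ while $\omega^n(\{v_t\le -k\})\to 0$) shows $\int_X\omega_{v_t}^n\to\int_X\omega^n$, i.e. the mass loss tends to $0$ — a contradiction. Hence $u\in\mathcal{E}(X,\omega)=\mathcal{E}(X,\theta)$.

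I expect the genuine obstacle to be the control of the non-pluripolar Monge--Amp\`ere mass under the limit $t\to\infty$ of the envelopes: concretely, proving $\theta_w^n(\{w<0\})=0$ in the first part and $\int_X\omega_{v_t}^n\to\int_X\omega^n$ in the second. Both amount to a monotone, truncated convergence statement for Monge--Amp\`ere masses of increasing sequences, where the usual difficulty is the mass carried by the level sets $\{v_t=-k\}$ of the truncations; this has to be absorbed by choosing the cut-off levels suitably and exploiting that the limit ($0$) is bounded. The remaining ingredients — the contact-set structure of envelopes, the elementary inequalities of Corollary~\ref{cor: MA inequality}, and the domination principle — are already available in the forms stated above.
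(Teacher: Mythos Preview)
Your overall architecture for the forward direction matches the paper's: bound $\theta_{w_t}^n$ on $\{w_t<0\}$ by $\mathbf{1}_{\{u\le -t\}}\theta_u^n$ via the contact-set property and Corollary~\ref{cor: MA inequality}, pass to the limit, then apply the domination principle. The difference is precisely in the step you yourself flag as the obstacle. The paper does \emph{not} attempt a bare-hands truncation. Instead it uses the full-mass hypothesis at this point: since $u\in\mathcal{E}(X,\theta)$ and $u\le w_t\le w$, Proposition~\ref{prop : stab maxima} gives $w_t,w\in\mathcal{E}(X,\theta)$; monotone convergence then implies convergence in capacity, and Theorem~\ref{thm: continuity cap} yields $\theta_{w_t}^n\to\theta_w^n$ weakly. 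Combined with the two-term estimate $\theta_{w_t}^n\le\mathbf{1}_{\{u\le -t\}}\theta_u^n+\mathbf{1}_{\{w=0\}}\theta^n$ (you dropped the second term), one lets $t\to\infty$ and reads off $\theta_w^n\le\mathbf{1}_{\{w=0\}}\theta^n$. Your displayed bound $\theta_{\max(w,-s)}^n(\{w<0\})\le\theta^n(\{w\le -s\})$ is not derived from what precedes it and I do not see how it follows: on $\{w_t\le -s\}$ there is no reason for $\theta_{\max(w_t,-s)}^n$ to be controlled by the smooth measure $\theta^n$ (Corollary~\ref{cor: MA inequality} gives the opposite inequality there). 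So your attempt to prove $\theta_w^n(\{w<0\})=0$ ``for arbitrary $u$'' is more ambitious than needed and the argument, as written, has a gap exactly where you anticipated.

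For the converse the paper takes a shorter route: from $\theta\le\omega$ one gets $P_\omega[u]=0$, and then \cite[Theorem~3.15]{DDL23} directly gives $u\in\mathcal{E}(X,\omega)$, hence $u\in\mathcal{E}(X,\theta)$ by Proposition~\ref{prop: carc full mass}. Your contradiction argument through $P_\omega(A_0v_t)\equiv-\infty$ and the mass-defect inequality $v_-(\omega)\le A_0^n(\int_X\omega^n-\int_X\omega_{v_t}^n)$ is a legitimate alternative, and the algebra up to that inequality is correct. But the closing step $\int_X\omega_{v_t}^n\to\int_X\omega^n$ is again not a simple Bedford--Taylor truncation: your sketch requires interchanging the limits $t\to\infty$ and $k\to\infty$ in $\omega_{\max(v_t,-k)}^n(\{v_t\le -k\})$, which is exactly the delicate point. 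What actually works here is the lower semicontinuity of non-pluripolar mass under convergence in capacity (\cite[Theorem~2.6]{DDL23}, used throughout the paper): $v_t\nearrow 0$ in capacity gives $\int_X\omega^n\le\liminf_t\int_X\omega_{v_t}^n$, and the reverse inequality is the Guan--Li bound $\int_X\omega_{v_t}^n\le v_+(\omega)=\int_X\omega^n$. So your route is viable, but the missing ingredient is the same semicontinuity input that the paper's citation already packages.

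In short: the strategy is sound and close to the paper's, but both limit steps need the continuity/semicontinuity machinery of Theorem~\ref{thm: continuity cap} and \cite{DDL23} rather than the ad hoc truncation bounds you wrote down.
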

\begin{proof}
    Assume $u \in \mathcal{E}(X,\theta)$ and set $\varphi_t = P_\theta(u+t,0)$ for every $t \in \mathbb{R}$.  
    By Theorem \ref{thm: env}, the measure $\theta_{\varphi_t}^n$  is carried by $\{\varphi_t=\min(u+t,0)\}$. Hence, we get by Corollary \ref{cor: MA inequality} that 
    \begin{align*}
        \theta_{\varphi_t}^n &\leq  \textit{1}_{\{\varphi_t =u+t \leq 0\}}  \theta_{\varphi_t}^n +  \textit{1}_{\{\varphi_t =0 \leq u+t\}}  \theta_{\varphi_t}^n \\
        &\leq \textit{1}_{\{u\leq -t\}}  \theta_{u}^n +  \textit{1}_{\{\varphi_t = 0\}}  \theta^n \\
        &\leq \textit{1}_{\{u\leq -t\}}  \theta_{u}^n +  \textit{1}_{\{P_\theta[u] = 0\}}  \theta^n,
    \end{align*}
    because $\varphi_t \leq P_\theta[u] \leq 0$. Observing that $\varphi_t \nearrow P_\theta[u]$ a.e. implies  $\theta_{\varphi_t}^n \rightarrow \theta_{P_\theta[u]}^n$ weakly by Theorem \ref{thm: continuity cap} below. Therefore, letting $t \rightarrow +\infty$ in the last inequality gives
    $$ \theta_{P_\theta[u]}^n \leq \textit{1}_{\{P_\theta[u] = 0\}}  \theta^n, $$
    since $\theta_u^n$ vanishes on pluripolar sets. 
    Hence $\theta_{P_\theta[u]}^n=0$ on $\{P_\theta[u] <0\}$, and consequently $P_\theta[u]=0$ by Theorem \ref{thm : dom prin}.

    Conversely, if $u \in \PSH(X,\theta)$ is such that $P_\theta[u]=0$ then we get $u \in \PSH(X,\omega)$ and $P_\omega[u]=0$ because $\theta \leq \omega$. \cite[Theorem 3.15]{DDL23} thus implies $u \in \mathcal{E}(X,\omega)$, and hence $u \in \mathcal{E}(X,\theta)$ by Proposition \ref{prop: carc full mass}. 
\end{proof}
With Theorem \ref{thm: env of sing type} in hand, one can argue as in the proof of \cite[Theorem 1.1]{DDL1} to show that functions in $\mathcal{E}(X,\theta)$ have zero Lelong numbers at all points.  
\section{Continuity of non-pluripolar Monge-Amp\`ere measures} \label{sect: cont of MA}
%\subsection{Convergence in capacity}
Let $D \subset X$ be a Borel set. The capacity of $D$ with respect to $\omega$ is defined by 
$$ \capa_\omega(D) := \sup \left\{ \int_D (\omega + dd^c h)^n : h \in \PSH(X,\omega), 0\leq h\leq 1 \right\}.  $$
We say that a sequence of functions $f_j:X \rightarrow \mathbb{R}\cup \{\pm\infty\}$ converges in capacity to a function $f$ if 
$$ \lim_{j\rightarrow +\infty} \capa_{\omega}(\{|f_j - f|> \delta \}) = 0, $$
for every $\delta>0$. 
 Notice that if $(f_j)$ is a monotone sequence of quasi-psh functions that converges almost everywhere to a quasi-psh function $f$, then $f_j \rightarrow f$ in capacity by the Hartogs lemma. 
 
 %In constructing weak solutions one often needs to use convergence in capacity of the potentials, which guarantees the convergence of the Monge-Amp\`ere measures. 

%Our first main result in this direction is the following. 

 %By the same idea, we prove a similar result for uniformly bounded sequences. We stress that, in this case, the existence of a Guan-Li metric is not needed. 
 In our result below, we give a sufficient condition for convergence in capacity of sequences of uniformly bounded quasi-psh functions, positively answering a question of Ko{\l}odziej-Nguyen \cite{KN22}.  
\begin{theorem}\label{thm: cv cap bounded}
 Let $u_j \in \PSH(X,\theta)$ be a uniformly bounded sequence converging in $L^1(X)$ to a $\theta$-psh function $u$. If 
 $$ \lim_{j\rightarrow +\infty} \int_X |u_j - u| \theta_{u_j}^n = 0, $$
 then $u_j \rightarrow u$ in capacity. 
\end{theorem}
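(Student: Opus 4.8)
The plan is to establish the two one-sided capacity estimates $\capa_\omega(\{u_j>u+\delta\})\to 0$ and $\capa_\omega(\{u_j<u-\delta\})\to 0$ separately, and to do so it suffices to argue along an arbitrary subsequence (passing to a further subsequence), since then every subsequence admits a sub-subsequence converging in capacity to $u$, which forces convergence of the whole sequence. The upper estimate follows from $L^1$-convergence alone: setting $U_m=(\sup_{l\ge m}u_l)^*$, the functions $U_m$ are uniformly bounded $\theta$-psh and decrease to $(\limsup_l u_l)^*=u$, hence $U_m\to u$ in capacity by the monotone (Hartogs) convergence recalled above; since $u_j\le U_j$ outside a pluripolar set, $\{u_j>u+\delta\}\subset\{U_j-u>\delta\}$ and the claim follows, with no role for the Guan-Li condition.

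The heart of the matter is the lower estimate, for which I would introduce the rooftop envelopes $\phi_j:=P_\theta(u_j,u)=P_\theta(\min(u_j,u))\le u$. Applying Lemma \ref{lem: MA of rooftop env 2} with $h=u$ gives $\int_X|e^{\phi_j}-e^u|\,\theta_{\phi_j}^n\le\int_X|e^{u_j}-e^u|\,\theta_{u_j}^n$; since all functions take values in a fixed interval $[-M,0]$, the elementary bounds $e^{-M}(u-\phi_j)\le e^u-e^{\phi_j}$ and $|e^{u_j}-e^u|\le|u_j-u|$ turn the hypothesis into
$$ \int_X(u-\phi_j)\,\theta_{\phi_j}^n\longrightarrow 0. $$
Passing to a subsequence I may assume in addition that $u_j\to u$ almost everywhere and that $\sum_j\int_X(u-\phi_j)\,\theta_{\phi_j}^n<\infty$. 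I then form the increasing sequence of envelopes $v_k:=P_\theta\big(\inf_{j\ge k}\phi_j\big)$, which are bounded $\theta$-psh functions satisfying $v_k\le\phi_j\le u_j$ for every $j\ge k$, and $v_k\nearrow\psi$ for some $\theta$-psh $\psi\le u$.

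The decisive step is to show $\psi=u$. By Theorem \ref{thm: env} the measure $\theta_{v_k}^n$ is carried by the contact set $\{v_k=\inf_{j\ge k}\phi_j\}$; decomposing this set according to the first index $j\ge k$ realizing the infimum and using $v_k\le\phi_j$ together with Corollary \ref{cor: MA inequality}, I expect the energy bound
$$ \int_X(u-v_k)\,\theta_{v_k}^n\le\sum_{j\ge k}\int_X(u-\phi_j)\,\theta_{\phi_j}^n\longrightarrow 0. $$
Since $v_k\nearrow\psi$, Bedford-Taylor continuity along increasing sequences gives $\theta_{v_k}^n\to\theta_\psi^n$ weakly, and the lower-semicontinuity argument already used in the proof of Theorem \ref{thm: env} (via \cite[Lemma 2.4]{DDL23}) yields $\int_X(u-\psi)\,\theta_\psi^n\le\liminf_k\int_X(u-v_k)\,\theta_{v_k}^n=0$. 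As $u-\psi\ge 0$, this forces $\theta_\psi^n(\{\psi<u\})=0$, so the domination principle in its bounded form \cite[Proposition 2.8]{GL22} (which does not require the Guan-Li condition) gives $\psi\ge u$, whence $\psi=u$. Then $v_k\nearrow u$ converges to $u$ in capacity, and the sandwich $v_k\le u_j$ for $j\ge k$ yields $\{u_j<u-\delta\}\subset\{u-v_k>\delta\}$, so that $\limsup_j\capa_\omega(\{u_j<u-\delta\})\le\capa_\omega(\{u-v_k>\delta\})\to 0$ as $k\to+\infty$. Combined with the upper estimate this proves convergence in capacity along the subsequence, hence for the whole sequence.

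I expect the main obstacle to be precisely the identification $\psi=u$, and within it the energy bound for $\int_X(u-v_k)\,\theta_{v_k}^n$: the infimum $\inf_{j\ge k}\phi_j$ need not be attained pointwise, so decomposing the contact set by index requires care (an $\varepsilon$-approximation of the attaining index), and one must carefully justify passing the inequality to the weak limit of the measures $\theta_{v_k}^n$ for the merely quasi-continuous integrand $u-\psi$. A more direct route bounding $\capa_\omega(\{u-\phi_j>\delta\})$ by the $\theta_{\phi_j}^n$-mass is \emph{not} available: in the degenerate setting the capacity (taken with respect to $\omega$) cannot be controlled by the Monge-Amp\`ere mass of $\phi_j$ alone, which is exactly why the monotone envelope $v_k$, for which convergence in capacity is automatic, has to be introduced.
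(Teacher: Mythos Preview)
Your strategy is essentially the paper's: build, via rooftop envelopes, an increasing sequence $v_k\le u_j$ (for $j\ge k$) converging to $u$, invoke the bounded domination principle \cite[Proposition 2.8]{GL22}, and sandwich. Your explicit treatment of the upper estimate via $U_m=(\sup_{l\ge m}u_l)^*$ is correct and slightly more explicit than the paper, which absorbs it into the final line.

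The one genuine difference is in how the key energy bound is obtained, and here the paper's execution cleanly removes the obstacle you flagged. Rather than attacking $v_k=P_\theta(\inf_{j\ge k}u_j)$ directly and trying to decompose its contact set by attaining index (which, as you correctly worry, fails because the infimum need not be attained), the paper first works with the \emph{finite} envelopes $v_{j,k}:=P_\theta(\min(u_j,\dots,u_k))$. Since $v_{j,k}=P_\theta(v_{j,k-1},u_k)$, Lemma~\ref{lem: MA of rooftop env 2} iterates exactly to give
\[
\int_X |v_{j,k}-u|\,\theta_{v_{j,k}}^n\le \sum_{l=j}^k\int_X|u_l-u|\,\theta_{u_l}^n\le 2^{-j+1},
\]
and only then does one let $k\to\infty$ along the decreasing sequence $v_{j,k}\searrow v_j$ using Bedford--Taylor convergence and \cite[Lemma~2.5]{DDL23}. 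No contact-set decomposition of an infinite infimum is ever needed. Your proposed $\varepsilon$-approximation of the attaining index could perhaps be pushed through, but it is strictly harder than this two-step limit.

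Two minor simplifications compared to your write-up: the intermediate layer $\phi_j=P_\theta(u_j,u)$ is unnecessary (the paper works directly with the $u_j$), and in the uniformly bounded setting there is no need to pass through exponentials, since Lemma~\ref{lem: MA of rooftop env 2} applied with $|e^a-e^b|\asymp|a-b|$ gives the linear bound directly.
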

\begin{proof}
    Up to extracting a subsequence, one can assume 
    $$ \int_X |u_j - u| \theta_{u_j}^n \leq 2^{-j}, \; \; \forall j. $$
    Fix $j \in \mathbb{N}$ and consider $v_{j,k}= P_\theta(\min(u_j,..,u_k))$ for every  $k \geq j$. The sequence $(v_{j,k})_k$ decreases to the function $v_j = P_\theta(\inf_{k\geq j} u_k)$. It follows from Lemma \ref{lem: MA of rooftop env 2} that
     $$ \int_X |v_{j,k} - u| \theta_{v_{j,k}}^n \leq \sum_{l=j}^k \int_X |u_l - u| \theta_{u_l}^n.  $$
%  
   % For $D_l=\{v_{j,k}=u_l<\min_{j\leq m\leq l-1} u_m\}$, we have $D_l \cap D_{l'} = \emptyset$ whenever $l \neq l'$ and the measure $\theta_{v_{j,k}}^n$ is carried by $\cup_l D_l$ according to Theorem \ref{thm: env}. Notice that, for every $l$,  $v_{j,k} \leq u_l$ with equality on $D_l$. Hence, we get by Corollary \ref{cor: MA inequality} that
    %$$ \int_X |v_{j,k} - u| \theta_{v_{j,k}}^n \leq \sum_{l=j}^k \int_{D_l} |u_l - u| \theta_{u_l}^n \leq 2^{-j+1}.  $$
    Therefore, using the Bedford-Taylor convergence theorem for decreasing and  uniformly bounded sequences along with \cite[Lemma 2.5]{DDL23}, we obtain that
    $$ \int_X |v_j - u| \theta_{v_j}^n = \lim_{k\rightarrow +\infty} \int_X |v_{j,k} - u| \theta_{v_{j,k}}^n \leq 2^{-j+1}. $$
    The sequence $(v_j)$ is non-decreasing. Let $v \in \PSH(X,\theta)$ be such that $v_j \rightarrow v$ almost everywhere. We have again by the Bedford-Taylor convergence theorem and \cite[Lemma 2.5]{DDL23} that 
    $$ \int_X |v-u| \theta_v^n = \lim_{j\rightarrow +\infty} \int_X |v_j - u| \theta_{v_j}^n = 0,$$
   which implies $v \geq u$ by \cite[Proposition 2.8]{GL22}. Since $v_j \leq u_j$ for all $j$, we infer that $u=v$. Since $v_j\nearrow u$ and $v_j\leq u_j$, it follows that $u_j \rightarrow u$ in capacity, completing the proof. 
\end{proof}

We next prove that the non-pluripolar Monge-Amp\`ere measure is continuous with respect to convergence in capacity. 

 \begin{theorem}\label{thm: continuity cap}
    Assume $u_j$, $u \in \mathcal{E}(X,\theta)$ are such that  $u_j \rightarrow u$ in capacity. Assume $h_j$ is a sequence of uniformly bounded quasi-continuous functions converging in capacity to a bounded quasi-continuous function $h$. Then  $h_j\theta_{u_j}^n$ weakly converges to $h\theta_u^n$.
\end{theorem}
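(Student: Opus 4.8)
The plan is to reduce everything to the bounded case by truncation, using membership in $\mathcal{E}$ to control the tails uniformly in $j$, and to route the tail estimate through the Guan--Li metric $\omega$, where full mass forces the \emph{constant} total mass $V:=\int_X\omega^n$. Throughout I write $u_j^t=\max(u_j,-t)$, $u^t=\max(u,-t)$, and $M:=\sup_j\|h_j\|_\infty<\infty$. It suffices to show $\int_X\chi h_j\,\theta_{u_j}^n\to\int_X\chi h\,\theta_u^n$ for every continuous $\chi$. Applying the maximum principle (Theorem \ref{thm: plurifine property}) with the $\theta$-psh constant $-t$, the measures $\theta_{u_j}^n$ and $\theta_{u_j^t}^n$ agree on $\{u_j>-t\}$, so their difference is carried by $\{u_j\le -t\}$, and likewise for $u$.

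The hard part is the uniform tail estimate
\[
\lim_{t\to+\infty}\ \sup_j\ \Big(\theta_{u_j}^n(\{u_j\le -t\})+\theta_{u_j^t}^n(\{u_j\le -t\})\Big)=0.
\]
To prove it I would first pass to $\omega$. Since $\theta\le\omega$, one has $(\theta+dd^c v)^n\le(\omega+dd^c v)^n$ as measures for bounded $v$ (wedging the positive smooth form $\omega-\theta$ with positive currents), and letting the truncation level tend to infinity gives $\theta_{u_j}^n\le\omega_{u_j}^n$ and $\theta_{u_j^t}^n\le\omega_{u_j^t}^n$; so it is enough to bound the $\omega$-tails. By Proposition \ref{prop: carc full mass} each $u_j$ lies in $\mathcal{E}(X,\omega)$, hence $\int_X\omega_{u_j}^n=\int_X\omega_{u_j^t}^n=V$, and both $\omega$-tails equal $V-\int_{\{u_j>-t\}}\omega_{u_j}^n=\int_{\{u_j\le -t\}}\omega_{u_j}^n$. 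Fixing a $1$-Lipschitz cutoff $\sigma_t$ with $\sigma_t\equiv0$ on $(-\infty,-t]$ and $\sigma_t\equiv1$ on $[-t+1,+\infty)$, the function $\sigma_t(u_j)$ is quasi-continuous, satisfies $0\le\sigma_t(u_j)\le\mathbf 1_{\{u_j>-t\}}$, and converges in capacity to $\sigma_t(u)$; using plurifine locality to replace $\omega_{u_j}^n$ by $\omega_{u_j^t}^n$ on its support,
\[
\int_{\{u_j>-t\}}\omega_{u_j}^n\ \ge\ \int_X\sigma_t(u_j)\,\omega_{u_j^t}^n\ \longrightarrow\ \int_X\sigma_t(u)\,\omega_{u^t}^n\ \ge\ \int_{\{u>-t+1\}}\omega_u^n ,
\]
the middle convergence being the bounded weighted statement (Xing's theorem for $\omega_{u_j^t}^n\to\omega_{u^t}^n$ together with \cite[Lemma 2.4]{DDL23} for the weights $\sigma_t(u_j)\to\sigma_t(u)$). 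With $\eta_t:=V-\int_{\{u>-t+1\}}\omega_u^n\to0$ this gives $\limsup_j\int_{\{u_j\le -t\}}\omega_{u_j}^n\le\eta_t$ for each $t$; a short dichotomy on a putative bad pair $(t_m,j_m)$ with $t_m\to\infty$ (a recurring index contradicts $u_j\in\mathcal{E}(X,\omega)$, while $j_m\to\infty$ forces $\varepsilon_0\le\eta_T$ for all $T$) upgrades this to $\sup_j\int_{\{u_j\le -t\}}\omega_{u_j}^n\to0$.

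With the tail estimate in hand I would split, for fixed $t$,
\[
\int_X\chi h_j\,\theta_{u_j}^n-\int_X\chi h\,\theta_u^n=\mathrm{I}_j^t+\mathrm{II}_j^t+\mathrm{III}^t,
\]
where $\mathrm{I}_j^t=\int_X\chi h_j(\theta_{u_j}^n-\theta_{u_j^t}^n)$ is carried by $\{u_j\le -t\}$, $\mathrm{II}_j^t=\int_X\chi h_j\,\theta_{u_j^t}^n-\int_X\chi h\,\theta_{u^t}^n$, and $\mathrm{III}^t=\int_X\chi h(\theta_{u^t}^n-\theta_u^n)$ is carried by $\{u\le -t\}$. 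The tail estimate bounds $\sup_j|\mathrm{I}_j^t|\le M\|\chi\|_\infty\cdot o_t(1)$, and $|\mathrm{III}^t|\to0$ since $u\in\mathcal{E}(X,\theta)$. For each fixed $t$ one has $u_j^t\to u^t$ in capacity among uniformly bounded potentials and $\chi h_j\to\chi h$ in capacity among uniformly bounded quasi-continuous functions, so $\mathrm{II}_j^t\to0$ as $j\to\infty$ by the bounded case (Xing's theorem and \cite[Lemma 2.4]{DDL23}). An $\varepsilon/3$ argument — choose $t$ to control $\mathrm I$ uniformly and $\mathrm{III}$, then let $j\to\infty$ — concludes.

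I expect the uniform tail estimate to be the genuine obstacle: for a single function the tail mass vanishes by the very definition of $\mathcal{E}$, but uniformity in $j$ is \emph{not} formal, precisely because the non-closed form $\theta$ has variable total mass $\int_X\theta_{u_j}^n$. Transferring to $\omega$, where Guan--Li forces the constant mass $V$, is what turns smallness of the tail into near-maximality of $\int_{\{u_j>-t\}}\omega_{u_j}^n$, a quantity one can reach through the bounded convergence theorem via the Lipschitz cutoffs $\sigma_t$.
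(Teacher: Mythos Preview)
Your proof is correct and takes a genuinely different route from the paper's. The paper does not truncate at all: it invokes \cite[Theorem 2.6]{DDL23}, which reduces the entire question to the single mass inequality $\int_X\theta_u^n\ge\limsup_j\int_X\theta_{u_j}^n$. That inequality is obtained by a neat binomial trick: expanding
\[
\omega_v^n=\sum_{k=0}^n\binom{n}{k}\theta_v^k\wedge(\omega-\theta)^{n-k},
\]
the left side has constant total mass $\int_X\omega^n$ (since $u_j,u\in\mathcal E(X,\omega)$ by Proposition~\ref{prop: carc full mass}), while each summand on the right has lower-semicontinuous mass under convergence in capacity by \cite[Theorem 2.6]{DDL23}. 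These two constraints force every individual mass $m_j^k=\int_X\theta_{u_j}^k\wedge(\omega-\theta)^{n-k}$ to converge to $m^k$, in particular for $k=n$, and a second application of \cite[Theorem 2.6]{DDL23} finishes.

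Your argument instead establishes the uniform tail estimate $\sup_j\omega_{u_j}^n(\{u_j\le-t\})\to0$ directly and then compares $\theta_{u_j}^n$ with its truncation $\theta_{u_j^t}^n$ via $\varepsilon/3$. Both approaches exploit the Guan--Li condition at exactly the same place---the constancy of the $\omega$-mass on $\mathcal E(X,\omega)$---but the paper's mass-balance argument is shorter and treats bounded and unbounded potentials in one stroke, while yours is more explicit about the tail mechanism and closer in spirit to the original Guedj--Zeriahi arguments. One small remark: your term $\mathrm{II}_j^t$ appeals to the bounded case of the very theorem being proved. This is not circular, since for uniformly bounded potentials the convergence $(\theta+dd^c u_j^t)^n\to(\theta+dd^c u^t)^n$ is a local statement and follows from Bedford--Taylor--Xing theory after writing $(\theta+dd^c v)^n$ locally as an alternating sum of wedge products of $(dd^c(\psi+v))^k$ with smooth forms, but it would be worth saying so explicitly rather than citing ``Xing's theorem'' (which is stated for K\"ahler forms).
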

\begin{proof}
Let $\Theta$ be a  cluster point of the sequence $\theta_{u_j}^n$ for the weak topology. We prove that $\Theta = \theta_u^n$. Up to extracting a subsequence, one can assume  $\theta_{u_j}^n \rightarrow \Theta$ weakly. 
  According to \cite[Theorem 2.6]{DDL23}, it suffices to prove 
  $$ \int_X \theta_u^n \geq \limsup_{j\rightarrow +\infty} \int_X \theta_{u_j}^n = \Theta(X).  $$
  Fix  $j \in \mathbb{N}$ and $k \in \{0,...,n\}$. Setting  
  $$ m^k = \int_X \theta_u^k \wedge (\omega - \theta)^{n-k} \; \text{and} \; m_j^k = \int_X \theta_{u_j}^k \wedge (\omega - \theta)^{n-k}, $$
   we obtain $m^k \leq \liminf_{j\rightarrow +\infty} m_j^k$   by \cite[Theorem 2.6]{DDL23}.
%  $$  \int_X \theta_u^k \wedge (\omega - \theta)^{n-k} \leq \liminf_{j\rightarrow +\infty} \int_X \theta_{u_j}^k\wedge (\omega - \theta)^{n-k}, $$
 % for every $k=0,..,n$. 
  
 % For simplicity, we set
 % $$ m^k = \int_X \theta_u^k \wedge (\omega - \theta)^{n-k} \; \text{and} \; m_j^k = \int_X \theta_{u_j}^k \wedge (\omega - \theta)^{n-k}. $$
  On the other hand, we get by Proposition \ref{prop: carc full mass} that $u_j$, $u \in \mathcal{E}(X,\omega)$, and consequently 
  $$ \int_X \omega_{u_j}^n = \int_X \omega_{u}^n =  \int_X \omega^n.  $$
  Using the Newton expansion formula, we obtain
  $$ \sum_{k=0}^n \binom{n}{k} m^k = \sum_{k=0}^n \binom{n}{k} m_j^k. $$
 % and 
  %$$ \sum_{k=0}^n \binom{n}{k} m_j^k =  \int_X \omega_{u_j}^n  = \int_X \omega^n.  $$
 % for every $j$. 
 %  Multiplying each term by $\binom{n}{k}$ and summing from $k=0$ to $k=n$ yields  
 %$$    \int_X \omega^n =  \sum_{k=0}^n \binom{n}{k} \int_X \theta_u^k \wedge (\omega - \theta)^{n-k}  \leq \liminf_{j\rightarrow +\infty} \sum_{k=0}^n \binom{n}{k} \int_X \theta_{u_j}^k \wedge (\omega - \theta)^{n-k}.$$
    Therefore, we infer that $m^k = \liminf_{j\rightarrow +\infty} m_j^k$ for %every $k$. 
  %$$ \int_X \theta_u^k \wedge (\omega - \theta)^{n-k} = \liminf_{j\rightarrow +\infty} \int_X \theta_{u_j}^k\wedge (\omega - \theta)^{n-k}, $$
   every $k$. In particular for $k=n$, we have
 $$   \int_X \theta_u^n = \liminf_{j\rightarrow +\infty}     \int_X \theta_{u_j}^n = \Theta(X).$$
The result thus follows by applying \cite[Theorem 2.6]{DDL23}.
%yielding the weak convergence $\theta_{u_j}^n \to \theta_u^n$. By the proof of \cite[Theorem 2.6]{DDL23}, we actually have  $h_j\theta_{u_j}^n \to h\theta_u^n$. 
%
%We now treat the general case. Let $C$ be a constant such that $\sup(|h|,|h_j|)\leq C$. Then, by \cite[Theorem 2.6]{DDL23},
%\[
%\liminf (h_j+C)\theta_{u_j}^n \geq (h+C)\theta_u^n\; \; \text{and}\; \liminf (C-h_j)\theta_{u_j}^n \geq (C-h)\theta_u^n. 
%\]
%Summing up we obtain 
%\[
%\liminf \theta_{u_j}^n \geq  \theta_u^n,
%\]
%Since $\lim_j \theta_{u_j}^n= \theta_u^n$, all the above inequalities become equalities, yielding the result. 
\end{proof}

The following result gives a sufficient condition to ensure the convergence in capacity of $\theta$-psh functions in $\mathcal{E}(X,\theta)$, generalizing Theorem \ref{thm: cv cap bounded}. 
\begin{theorem}\label{thm cvg cap unbounded}
    Assume $u_j \in \mathcal{E}(X,\theta)$ is such  that   $\theta_{u_j}^n \leq \mu$ for some non-pluripolar Radon measure $\mu$. If $u_j \rightarrow u \in \PSH(X,\theta)$ in $L^1(X)$, then $u \in \mathcal{E}(X,\theta)$ and $u_j \rightarrow u$ in capacity. 
\end{theorem}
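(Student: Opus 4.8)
\medskip

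The plan is to realise this convergence as a \emph{quasi-monotone} one, squeezing $u_j$ between a decreasing and an increasing family, both tending to $u$ in capacity. Since the $u_j$ converge in $L^1$, their suprema are uniformly bounded, so after subtracting a single fixed constant I may assume $u_j\le 0$ and $u\le 0$, whence $0<e^{u_j},e^u\le 1$; this normalisation is harmless because $\theta_{u_j+c}^n=\theta_{u_j}^n$ and convergence in capacity ignores additive constants. For the upper barrier I set $\psi_j=(\sup_{k\ge j}u_k)^*$. As $\psi_j\ge u_j\in\mathcal{E}(X,\theta)$, Proposition \ref{prop : stab maxima} gives $\psi_j\in\mathcal{E}(X,\theta)$, and since $u_j\to u$ in $L^1$ the sequence $\psi_j$ decreases to $u$; being monotone it converges to $u$ in capacity, and since $e^{\psi_j}\searrow e^u$ boundedly, dominated convergence yields $\int_X(e^{\psi_j}-e^u)\,d\mu\to 0$. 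In particular $\limsup_j u_j\le u$ everywhere. This step uses neither the Guan--Li condition nor the domination hypothesis.

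For the lower barrier I mimic the bounded case (Theorem \ref{thm: cv cap bounded}) and put $v_{j,k}=P_\theta(\min(u_j,\dots,u_k))$ for $k\ge j$. Iterating Lemma \ref{lem: MA of rooftop env 1} shows $v_{j,k}\in\mathcal{E}(X,\theta)$ with $\theta_{v_{j,k}}^n\le\mu$; as $k\to+\infty$ they decrease to $v_j:=P_\theta(\inf_{k\ge j}u_k)$, and the $v_j$ increase to some $v$. It is here that the domination enters, in its role of keeping the envelopes finite: if some $v_{j,k}$ decreased to $-\infty$ as $k\to+\infty$, then the mass bound $\int_X\theta_{v_{j,k}}^n\ge v_-(\theta)>0$ of Corollary \ref{cor: v- positive}, combined with $\theta_{v_{j,k}}^n\le\mu$, would be violated exactly as in the proof of the implication $1)\Rightarrow 3)$ of Proposition \ref{prop: carc full mass}; thus each $v_j\in\mathcal{E}(X,\theta)$, and since $v\ge v_1$ the limit $v$ is not $\equiv-\infty$, so $v\in\mathcal{E}(X,\theta)$ by Proposition \ref{prop : stab maxima}. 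Because $v_{j,k}\le u_l$ for $j\le l\le k$, letting $k\to+\infty$ and then $j\to+\infty$ gives $v\le\liminf_j u_j\le u$. As $v\le u$ and $v\in\mathcal{E}(X,\theta)$, Proposition \ref{prop : stab maxima} yields $u\in\mathcal{E}(X,\theta)$, which is the first assertion; note this is obtained before the harder analytic input below.

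It remains to prove $v=u$, for then $v_j\nearrow u$ in capacity and the squeeze $v_j\le u_j\le\psi_j$, with $\psi_j\searrow u$ also in capacity, forces $u_j\to u$ in capacity. Granting the key estimate $\int_X|e^{u_j}-e^u|\,\theta_{u_j}^n\to 0$ (and, after passing to a subsequence, summability of these numbers), the exponential rooftop inequality of Lemma \ref{lem: MA of rooftop env 2}, iterated with $h=u$, gives $\int_X|e^{v_{j,k}}-e^u|\,\theta_{v_{j,k}}^n\le\sum_{l=j}^k\int_X|e^{u_l}-e^u|\,\theta_{u_l}^n$. Letting $k\to+\infty$ along $v_{j,k}\searrow v_j$ and then $j\to+\infty$ along $v_j\nearrow v$, all terms in $\mathcal{E}(X,\theta)$ and the weights $e^{\cdot}$ bounded and quasi-continuous, the continuity statement of Theorem \ref{thm: continuity cap} (monotone sequences converge in capacity) lets me pass to the limit and obtain $\int_X|e^v-e^u|\,\theta_v^n=0$. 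Hence $\theta_v^n(\{v<u\})=0$, and the domination principle (Theorem \ref{thm : dom prin} with $c=0$) gives $v\ge u$; together with $v\le u$ this yields $v=u$.

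The crux is therefore the key estimate. Its positive part is immediate: on $\{u_j\ge u\}$ one has $0\le e^{u_j}-e^u\le e^{\psi_j}-e^u$, so by $\theta_{u_j}^n\le\mu$ and the upper barrier $\int_{\{u_j\ge u\}}(e^{u_j}-e^u)\,\theta_{u_j}^n\le\int_X(e^{\psi_j}-e^u)\,d\mu\to 0$. The contribution of $\{u_j<u\}$ is the hard part: fixing $\delta>0$ and using $0<e^u\le 1$ one bounds it by $\delta\,\mu(X)+\theta_{u_j}^n(\{u_j<u-\delta\})$, so the whole estimate reduces to the sublevel mass decay $\theta_{u_j}^n(\{u_j<u-\delta\})\to 0$ for every $\delta>0$. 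This is where I expect the genuine difficulty and where the Guan--Li condition is indispensable: the comparison principle it guarantees (Chiose) is what allows one to control the Monge--Amp\`ere mass of $u_j$ on the sublevel set, the domination $\theta_{u_j}^n\le\mu$ being precisely what prevents $u_j$ from creating deep potential wells on $\mu$-large sets as $j\to+\infty$. Concretely, writing $m_j=\max(u_j,u-\delta)$ and using the maximum principle (Theorem \ref{thm: plurifine property}) to identify $\theta_{u_j}^n$ with $\theta_{m_j}^n$ on $\{u_j>u-\delta\}$ and with $\theta_u^n$ on the complement, the decay should follow from the convergence of the total masses together with $\theta_u^n(\{u_j<u-\delta\})\to 0$. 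Once this mass decay is secured, the remaining ingredients are the envelope-and-capacity manipulations assembled above, and the final squeeze delivers $u_j\to u$ in capacity.
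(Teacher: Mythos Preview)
Your overall architecture---upper barrier $\psi_j$, lower barrier via rooftop envelopes $v_{j,k}$, passage to the limit, and domination principle to conclude $v=u$---matches the paper's proof closely. The difference is that you have misidentified where the difficulty lies.

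The ``key estimate'' $\int_X|e^{u_j}-e^u|\,\theta_{u_j}^n\to 0$ is \emph{not} the hard step, and it does not require any comparison-principle argument on sublevel sets. Since $\theta_{u_j}^n\le\mu$, you have immediately
\[
\int_X|e^{u_j}-e^u|\,\theta_{u_j}^n \le \int_X|e^{u_j}-e^u|\,d\mu,
\]
and the right-hand side tends to zero because the $e^{u_j}$ are uniformly bounded $\theta$-psh functions converging in $L^1(\omega^n)$ to $e^u$; this forces convergence in $L^1(\mu)$ for any positive non-pluripolar measure $\mu$ (the paper cites \cite[Lemma~11.5]{GZ17} and \cite[Corollary~2.2]{KN22} for this). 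So both the ``positive'' and ``negative'' parts of your split are handled by the same one-line domination, and your proposed detour through $\theta_{u_j}^n(\{u_j<u-\delta\})\to 0$ is unnecessary. The Guan--Li condition is used elsewhere (in Theorem~\ref{thm: continuity cap} and in the positivity $v_-(\theta)>0$), but not for this estimate.

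Two smaller points. First, your argument that $v_{j,k}$ does not decrease to $-\infty$ by analogy with Proposition~\ref{prop: carc full mass} is imprecise; the paper isolates this as a separate lemma (Lemma~\ref{lem: prep 1}): from $\theta_{v_{j,k}}^n\le\mu$ one gets $\inf_{j,k}\int_X e^u\,\theta_{v_{j,k}}^n>0$, which combined with the key estimate bounds $\int_X e^{v_{j,k}}\theta_{v_{j,k}}^n$ away from zero and hence $\sup_X v_{j,k}$ away from $-\infty$. Second, concluding $v_j\in\mathcal{E}(X,\theta)$ from $v_{j,k}\searrow v_j$ requires more than $v_j\in\PSH(X,\theta)$; the paper handles this (and the analogous claim $u\in\mathcal{E}(X,\theta)$) via a separate lemma (Lemma~\ref{lem: prep 2}) showing that an $L^1$ limit of functions in $\mathcal{E}(X,\theta)$ with Monge--Amp\`ere measures dominated by $\mu$ again lies in $\mathcal{E}(X,\theta)$, by checking the envelope criterion $P_\theta(Au)\in\PSH(X,\theta)$.
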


We will need the following lemmas. 

\begin{lemma}\label{lem: prep 1}
    Assume $u_j\in  \mathcal{E}(X,\theta)$ is such that $\theta_{u_j}^n \leq \mu$ for some Radon measure vanishing on pluripolar sets. Then, for any $v\in \PSH(X,\theta)$, we have
    \[
    \inf_{j} \int_X e^v \theta_{u_j}^n >0. 
    \]
\end{lemma}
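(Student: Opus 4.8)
The plan is to produce a \emph{uniform} (in $j$) positive lower bound by combining two facts: that every function in $\mathcal{E}(X,\theta)$ carries total Monge-Amp\`ere mass at least $v_-(\theta)>0$, and that the domination $\theta_{u_j}^n\le\mu$ forces this mass to avoid the region where $e^v$ is small, uniformly in $j$.

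First I would record the total-mass bound $\int_X\theta_{u_j}^n\ge v_-(\theta)$. Since $u_j\in\mathcal{E}(X,\theta)$, the escaping mass $(\theta+dd^c\max(u_j,-t))^n(\{u_j\le -t\})$ tends to $0$, so
\[
\int_X\theta_{u_j}^n=\lim_{t\to+\infty}\int_X(\theta + dd^c\max(u_j,-t))^n.
\]
Each truncation $\max(u_j,-t)$ is a bounded $\theta$-psh function, hence its total mass is $\ge v_-(\theta)$, and $v_-(\theta)>0$ by Corollary \ref{cor: v- positive}; passing to the limit yields the claimed bound, which depends only on $\theta$ and not on $j$.

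Next I would localize away from $\{v=-\infty\}$. For $M>0$, on $\{v\ge -M\}$ one has $e^v\ge e^{-M}$, so
\[
\int_X e^v\,\theta_{u_j}^n\ \ge\ e^{-M}\int_{\{v\ge -M\}}\theta_{u_j}^n\ =\ e^{-M}\Big(\int_X\theta_{u_j}^n-\int_{\{v<-M\}}\theta_{u_j}^n\Big).
\]
Using $\theta_{u_j}^n\le\mu$ bounds the escaping term by $\mu(\{v<-M\})$, which is independent of $j$. Since $\bigcap_{M}\{v<-M\}=\{v=-\infty\}$ is pluripolar, $\mu$ is a finite Radon measure (as $X$ is compact), and $\mu$ vanishes on pluripolar sets, continuity from above gives $\mu(\{v<-M\})\to 0$ as $M\to+\infty$. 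Fixing $M$ large enough that $\mu(\{v<-M\})\le v_-(\theta)/2$, the displayed inequality yields $\int_X e^v\,\theta_{u_j}^n\ge e^{-M}v_-(\theta)/2>0$ for every $j$, which is exactly the assertion.

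The only real subtlety, and the point I would flag as the main obstacle, is the \emph{uniformity} of both estimates: the lower bound on total mass must be one constant valid for all $j$ (supplied by $v_-(\theta)$), and the concentration estimate must likewise be $j$-independent (supplied by the single fixed dominating measure $\mu$ together with its vanishing on pluripolar sets). Without the hypothesis $\theta_{u_j}^n\le\mu$ one could not rule out the mass of $\theta_{u_j}^n$ drifting into $\{v<-M\}$ along the sequence; it is precisely the common dominating measure that makes the tail uniformly negligible. Everything else is a direct computation.
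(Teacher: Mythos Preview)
Your proof is correct and follows essentially the same approach as the paper: both split $\int_X\theta_{u_j}^n$ according to the sublevel set $\{v<-M\}$, bound the tail by $\mu(\{v<-M\})$ via the domination hypothesis, and use the uniform lower bound $\int_X\theta_{u_j}^n\ge v_-(\theta)>0$. The only cosmetic difference is that the paper argues by contradiction while you give a direct estimate; your version also spells out why $\int_X\theta_{u_j}^n\ge v_-(\theta)$ holds for $u_j\in\mathcal{E}(X,\theta)$, which the paper leaves implicit.
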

\begin{proof}
    Assume in the contrary that 
    $$  \inf_{j} \int_X e^v \theta_{u_j}^n =0.  $$
    Up to extracting a subsequence, one can assume 
    $$ \lim_{j\rightarrow+\infty}  \int_X e^v \theta_{u_j}^n = 0.  $$
   Fixing $C>0$, we have
   $$ v_-(\theta) \leq \int_X \theta_{u_j}^n = \int_{\{v \leq -C\}} \theta_{u_j}^n + \int_{\{v > -C\}} \theta_{u_j}^n \leq  \mu(\{v \leq -C\} + e^C \int_X e^v \theta_{u_j}^n. $$
   Letting $j \rightarrow +\infty$ and then $C\rightarrow +\infty$, we obtain $v_-(\theta)=0$. This contradicts Corollary \ref{cor: v- positive}. The result follows. 
\end{proof}

\begin{lemma}\label{lem: prep 2}
    Assume $u_j\in  \mathcal{E}(X,\theta)$  satisfy the hypothesis of the last theorem and $u_j\to u\in \PSH(X,\theta)$ in $L^1(X)$.  Then $u \in \mathcal{E}(X,\theta)$. 
\end{lemma}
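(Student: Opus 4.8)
The plan is to exhibit a single function of the full mass class lying below $u$ and then invoke Proposition \ref{prop : stab maxima}, which is upward closed: if $w\in\mathcal{E}(X,\theta)$, $w\le u$ and $u\in\PSH(X,\theta)$, then $u\in\mathcal{E}(X,\theta)$. So the whole statement reduces to producing one $w\in\mathcal{E}(X,\theta)$ with $w\le u$. First I would pass to a subsequence so that $u_j\to u$ almost everywhere, and set $g_j=\inf_{k\ge j}u_k$, which increases to $u$ a.e.; in particular $g_j\le u$. The natural candidate is the rooftop envelope $v_j:=P_\theta(g_j)$, which satisfies $v_j\le g_j\le u$ by Theorem \ref{thm: env}.

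To access $v_j$ I would realize it as a decreasing limit: put $v_{j,k}=P_\theta(\min(u_j,\dots,u_k))$ for $k\ge j$. Using the identity $P_\theta(\min(a,b))=P_\theta(\min(P_\theta(a),b))$ one writes $v_{j,k}=P_\theta(\min(v_{j,k-1},u_k))$, so an iteration of Lemma \ref{lem: MA of rooftop env 1} gives $v_{j,k}\in\mathcal{E}(X,\theta)$ together with the domination $\theta_{v_{j,k}}^n\le\mu$. Since $\min(u_j,\dots,u_k)\searrow g_j$, one has $v_{j,k}\searrow v_j$ as $k\to+\infty$. By the reduction above it suffices to prove that $v_j\in\mathcal{E}(X,\theta)$ for some large $j$, and since each $v_{j,k}$ already lies in $\mathcal{E}(X,\theta)$ with $\theta_{v_{j,k}}^n\le\mu$, the only point to settle is that the decreasing limit does not degenerate, i.e. $v_j\not\equiv-\infty$.

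This is exactly where the hypothesis $\theta_{u_j}^n\le\mu$ enters, through Lemma \ref{lem: prep 1}. The idea is a contradiction argument: if $v_{j,k}\searrow-\infty$ uniformly, then, since $\theta_{v_{j,k}}^n$ is carried by $\{v_{j,k}=\min(u_j,\dots,u_k)\}\subset\{\,\min_{j\le l\le k}u_l\le\sup_X v_{j,k}\,\}$ and is dominated by $\mu$, the masses $\theta_{v_{j,k}}^n$ are forced to concentrate on the region where the $u_l$, and hence $u$, tend to $-\infty$; testing against $e^{u}$ (which is bounded and, since $\mu$ is non-pluripolar, lies in $L^1(\mu)$ and gives no mass to $\{u=-\infty\}$) would then drive $\int_X e^{u}\,\theta_{v_{j,k}}^n\to0$, contradicting the uniform lower bound $\inf_k\int_X e^{u}\,\theta_{v_{j,k}}^n>0$ supplied by Lemma \ref{lem: prep 1}. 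Once $v_j\in\mathcal{E}(X,\theta)$ is secured, Proposition \ref{prop : stab maxima} together with $v_j\le u$ yields $u\in\mathcal{E}(X,\theta)$.

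The hard part will be the concentration step, namely turning the localization of the support of $\theta_{v_{j,k}}^n$ into genuine smallness of $\int_X e^{u}\,\theta_{v_{j,k}}^n$. The delicate issue is that $u_j\to u$ only in $L^1(X,\omega^n)$, whereas the competing mass is measured by the possibly singular measure $\mu$; one must upgrade the $\omega^n$-control to control against $\mu$ on the sets $\{u_l\le-s\}$, which is precisely the mechanism the domination $\theta_{u_j}^n\le\mu$ is designed to provide. An equivalent route, which I would keep in reserve, is to verify the third characterization of Proposition \ref{prop: carc full mass}: show $P_\theta(Au)\in\PSH(X,\theta)$ for every $A\ge1$ by running the same non-degeneracy argument on $\phi_j=P_\theta(Au_j)$, whose Monge-Amp\`ere measures satisfy $\theta_{\phi_j}^n\le A^n\mu$ by Theorem \ref{thm: env} and Corollary \ref{cor: MA inequality}.
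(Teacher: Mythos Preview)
Your main approach has a genuine gap. You correctly obtain $v_{j,k}\in\mathcal{E}(X,\theta)$ with $\theta_{v_{j,k}}^n\le\mu$ and $v_{j,k}\searrow v_j$, and the non-degeneracy argument via Lemma~\ref{lem: prep 1} does show $v_j\not\equiv-\infty$, i.e.\ $v_j\in\PSH(X,\theta)$. But this is \emph{not} sufficient to conclude $v_j\in\mathcal{E}(X,\theta)$: the class $\mathcal{E}(X,\theta)$ is upward closed (Proposition~\ref{prop : stab maxima}) but not closed under decreasing limits, and the implication ``$v_{j,k}\in\mathcal{E}(X,\theta)$, $\theta_{v_{j,k}}^n\le\mu$, $v_{j,k}\to v_j$ in $L^1$ $\Rightarrow$ $v_j\in\mathcal{E}(X,\theta)$'' is precisely the lemma you are trying to prove. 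Indeed, the paper carries out exactly this step for the sequence $(v_{j,k})_k$ in the proof of Lemma~\ref{lem: prep 3}, and does so by \emph{invoking} Lemma~\ref{lem: prep 2}; so your main route is circular. Without $v_j\in\mathcal{E}(X,\theta)$, the inequality $v_j\le u$ buys nothing from Proposition~\ref{prop : stab maxima}.

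Your reserve route is the right one, and it is exactly what the paper does: set $\varphi_j=P_\theta(Au_j)$, use Theorem~\ref{thm: env} and Corollary~\ref{cor: MA inequality} to get $\theta_{\varphi_j}^n\le A^n\mu$ supported on $\{\varphi_j=Au_j\}$, and show $(\varphi_j)$ does not go uniformly to $-\infty$. The ``concentration step'' you flag as delicate is handled cleanly: on the support of $\theta_{\varphi_j}^n$ one has $e^{A^{-1}\varphi_j}=e^{u_j}$, hence
\[
\int_X\bigl|e^{A^{-1}\varphi_j}-e^{u}\bigr|\,\theta_{\varphi_j}^n\;\le\;A^n\int_X\bigl|e^{u_j}-e^{u}\bigr|\,d\mu\;\longrightarrow\;0,
\]
the last convergence being the passage from $L^1(\omega^n)$-convergence to $L^1(\mu)$-convergence for the uniformly bounded quasi-psh functions $e^{u_j}$ (via \cite[Lemma~11.5]{GZ17} and \cite[Corollary~2.2]{KN22}). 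If $\varphi_j\to-\infty$ uniformly, then $\int_X e^{A^{-1}\varphi_j}\theta_{\varphi_j}^n\to0$, forcing $\int_X e^{u}\theta_{\varphi_j}^n\to0$, which contradicts Lemma~\ref{lem: prep 1}. A convergent subsequence of $(\varphi_j)$ then produces a $\theta$-psh minorant of $Au$, so $P_\theta(Au)\in\PSH(X,\theta)$, and Proposition~\ref{prop: carc full mass} concludes.
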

\begin{proof}
 %   Assume by contradiction that $u_j \rightarrow - \infty$.  By our assumptions, this implies 
  %  $$ \lim_{j\rightarrow +\infty} \int_X  e^v \theta_{u_j}^n = 0. $$
  %   Fixing $C>0$, we get 
  %  $$ v_-(\theta) \leq \int_X \theta_{u_j}^n \leq \int_{\{ v \leq -C\}} \theta_{u_j}^n +  \int_{\{v > -C\}} \theta_{u_j}^n \leq  \int_{\{v \leq -C\}} d\mu  +  e^{C} \int_X e^{v} \theta_{u_j}^n. $$
  % Letting $j \rightarrow +\infty$ and then $C\rightarrow +\infty$ gives $v_-(\theta)=0$, contrary to Corollary \ref{cor: v- positive}. Therefore $(u_j)$ has a subsequence that converges to a function $u \in \PSH(X,\theta)$ in $L^1(X)$. 
  % Up to extracting a subsequence, we can assume $u_j \rightarrow u$ in $L^1(X)$ and almost everywhere.  
    
    According to Proposition \ref{prop: carc full mass}, it suffices to prove $P_\theta(Au) \in \PSH(X,\theta)$ for every $A\geq 1$.
    Fix $A\geq 1$ and set $\varphi_j = P_\theta(Au_j)$. It follows from Theorem \ref{thm: env} that $\theta_{\varphi_j}^n$ is carried by $\{\varphi_j = A u_j\}$ and that $\varphi_j \leq Au_j$.  
     Corollary \ref{cor: MA inequality} implies 
    $$ \theta_{\varphi_j}^n = \textit{1}_{\{\varphi_j = A u_j\}} \theta_{\varphi_j}^n \leq \textit{1}_{\{\varphi_j = A u_j\}} (A\theta+dd^c{\varphi_j})^n \leq  A^n \theta_{u_j}^n \leq A^n \mu. $$
    Hence, we obtain
 %  $$       \int_X |e^{A^{-1}\varphi_j} - e^{v}| \theta_{\varphi_j}^n       \leq A^n \int_{\{\varphi_j=Au_j\}} |e^{u_j} - e^{v}| \theta_{u_j}^n.   $$
    %Hence, we obtain
    $$ \lim_{j\rightarrow +\infty} \int_X |e^{A^{-1}\varphi_j} - e^{u}| \theta_{\varphi_j}^n \leq  A^n \lim_{j\rightarrow +\infty} \int_X |e^{u_j} - e^u| d\mu = 0, $$
    by \cite[Lemma 11.5]{GZ17} and \cite[Corollary 2.2]{KN22}. Assume by contradiction that $\varphi_j$ converges uniformly to $-\infty$. That implies 
    $$  \lim_{j\rightarrow +\infty} \int_X e^{A^{-1}\varphi_j} \theta_{\varphi_j}^n \leq \lim_{j\rightarrow +\infty} \int_X e^{A^{-1}\varphi_j} d\mu = 0,  $$
    by the Lebesgue convergence theorem.  Therefore, we obtain 
    $$  \lim_{j\rightarrow +\infty} \int_X  e^{u} \theta_{\varphi_j}^n = 0,$$
    contradicting Lemma \ref{lem: prep 1}. 
  %   Fixing $C>0$, we get 
  %  $$ v_-(\theta) \leq \int_X \theta_{\varphi_j}^n \leq \int_{\{ u \leq -C\}} \theta_{\varphi_j}^n +  \int_{\{u > -C\}} \theta_{\varphi_j}^n \leq A^n \int_{\{v \leq -C\}} d\mu  +  e^{C} \int_X e^{v} \theta_{\varphi_j}^n. $$
%   Letting $j \rightarrow +\infty$ and then $C\rightarrow +\infty$ gives $v_-(\theta)=0$, contrary to Corollary \ref{cor: v- positive}. 
It thus follows that $(\varphi_j)$ has a subsequence converging to a function $\varphi \in \PSH(X,\theta)$.  Since $\varphi_j \leq A u_j$ for every $j$, we conclude that $\varphi \leq A u$ and hence  $P_\theta(Au) \in \PSH(X,\theta)$. The result therefore follows by Proposition \ref{prop: carc full mass}.  
\end{proof}
\begin{lemma}\label{lem: prep 3}
    Consider $u_j$, $u$ as in Theorem \ref{thm cvg cap unbounded}. Then $(u_j)$ has a subsequence converging to $u$ in capacity. 
\end{lemma}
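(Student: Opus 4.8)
The plan is to realize $u$ as a quasi-monotone limit: I will squeeze the $u_j$ from below by an increasing family of $\theta$-psh functions built from rooftop envelopes, following the proof of Theorem \ref{thm: cv cap bounded}, but replacing the linear defect $|u_j-u|$ by the exponential defect $|e^{u_j}-e^u|$ so that the unbounded case is accessible through Lemmas \ref{lem: MA of rooftop env 1} and \ref{lem: MA of rooftop env 2}. Since $\theta_{u_j}^n\le\mu$ and $u_j\to u$ in $L^1$, the estimate $\int_X|e^{u_j}-e^u|\,d\mu\to0$ of \cite[Lemma 11.5]{GZ17} and \cite[Corollary 2.2]{KN22} gives $\int_X|e^{u_j}-e^u|\,\theta_{u_j}^n\to0$; after passing to a subsequence I may assume $\int_X|e^{u_j}-e^u|\,\theta_{u_j}^n\le 2^{-j}$ for all $j$.

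For fixed $j$ I set $v_{j,k}=P_\theta(\min(u_j,\dots,u_k))$ for $k\ge j$. Iterating Lemma \ref{lem: MA of rooftop env 1} shows $v_{j,k}\in\mathcal{E}(X,\theta)$ with $\theta_{v_{j,k}}^n\le\mu$, while iterating Lemma \ref{lem: MA of rooftop env 2} with $h=u$ yields the key estimate $\int_X|e^{v_{j,k}}-e^u|\,\theta_{v_{j,k}}^n\le\sum_{l=j}^k\int_X|e^{u_l}-e^u|\,\theta_{u_l}^n\le 2^{-j+1}$. The sequence $(v_{j,k})_k$ decreases to $v_j:=P_\theta(\inf_{k\ge j}u_k)$, and the delicate point is to let $k\to\infty$. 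First, $v_j\not\equiv-\infty$: were $v_{j,k}\searrow-\infty$, then $e^{v_{j,k}}\to0$ and the key estimate would force $\int_X e^u\,\theta_{v_{j,k}}^n\to0$, contradicting the lower bound of Lemma \ref{lem: prep 1}, whose proof uses only $\theta_{v_{j,k}}^n\le\mu$ and $v_-(\theta)>0$. Thus $v_j\in\PSH(X,\theta)$, and as $v_{j,k}\to v_j$ in $L^1$ with $\theta_{v_{j,k}}^n\le\mu$, Lemma \ref{lem: prep 2} gives $v_j\in\mathcal{E}(X,\theta)$. Being a decreasing sequence of quasi-psh functions, $v_{j,k}\to v_j$ in capacity, so Theorem \ref{thm: continuity cap}, applied with the uniformly bounded quasi-continuous weights $|e^{v_{j,k}}-e^u|\to|e^{v_j}-e^u|$ in capacity, gives weak convergence of the measures; testing against the constant $1$ then yields $\int_X|e^{v_j}-e^u|\,\theta_{v_j}^n\le 2^{-j+1}$.

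The family $(v_j)_j$ is non-decreasing; let $v\in\PSH(X,\theta)$ be its limit. Since $v\ge v_1\in\mathcal{E}(X,\theta)$, Proposition \ref{prop : stab maxima} gives $v\in\mathcal{E}(X,\theta)$, and Theorem \ref{thm: continuity cap} applied to the increasing convergence $v_j\nearrow v$ (again in capacity) lets me pass to the limit to get $\int_X|e^v-e^u|\,\theta_v^n=0$. Hence $\theta_v^n$ charges only $\{v=u\}$, so $\theta_v^n=0$ on $\{v<u\}$, and the domination principle (Theorem \ref{thm : dom prin} with $c=0$) forces $v\ge u$. Conversely, $v_j\le\inf_{k\ge j}u_k\le\liminf_k u_k\le u$ almost everywhere, comparing with an a.e.-convergent subsequence of $(u_k)$, so $v\le u$. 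Therefore $v=u$ and $v_j\nearrow u$.

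Finally I deduce convergence in capacity. Because $v_j\le u_j$ and $v_j\nearrow u$ in capacity, for every $\delta>0$ one has $\{u-u_j>\delta\}\subseteq\{u-v_j>\delta\}$, whose capacity tends to $0$, which controls the lower side. For the upper side I use that $L^1$-convergence of $\theta$-psh functions already forces $\capa_\omega(\{u_j>u+\delta\})\to0$, a Hartogs-type fact coming from the quasi-continuity of $u$ and the sub-mean value inequality (psh functions cannot concentrate upward), exactly as in the last step of Theorem \ref{thm: cv cap bounded}. Combining both sides gives $u_j\to u$ in capacity along the chosen subsequence. I expect the passage $k\to\infty$ to be the main obstacle: it is the only place where the full strength of $\theta_{u_j}^n\le\mu$ is needed, through Lemmas \ref{lem: prep 1} and \ref{lem: prep 2} and the non-degeneracy of $v_j$, and it is precisely what replaces the elementary decreasing Bedford--Taylor convergence available in the bounded setting.
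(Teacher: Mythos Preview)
Your proof is correct and follows essentially the same route as the paper: extract a subsequence with $\int_X|e^{u_j}-e^u|\,d\mu\le 2^{-j}$, build the rooftop envelopes $v_{j,k}=P_\theta(\min(u_j,\dots,u_k))$, use Lemmas \ref{lem: MA of rooftop env 1}--\ref{lem: prep 2} to show $v_j\in\mathcal{E}(X,\theta)$ with $\int_X|e^{v_j}-e^u|\,\theta_{v_j}^n\le 2^{-j+1}$, pass to the increasing limit $v$, and conclude $v=u$ via the domination principle. The only cosmetic difference is the final squeeze: the paper sandwiches $v_j\le u_j\le\max(u_j,u)$ and observes that $\max(u_j,u)\to u$ in capacity (Hartogs for the decreasing sequence $(\sup_{k\ge j}\max(u_k,u))^*$), which is exactly the content of your ``upper side'' argument phrased more cleanly.
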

\begin{proof}
     We can assume $u_j \leq 0$ for every $j$. The last lemma implies that  $u \in \mathcal{E}(X,\theta)$, and we have by the hypothesis that $u_j \rightarrow u$ in $L^1(X)$. We thus have $e^{u_j}\to e^u$ in $L^1$ and these are uniformly bounded $\theta$-psh functions.  It thus follows from \cite[Lemma 11.5]{GZ17} and \cite[Corollary 2.2]{KN22} that
    $$ \lim_{j\rightarrow +\infty} \int_X |e^{u_j} - e^u| d\mu = 0.  $$
    Hence, up to extracting a subsequence, one can assume 
    $$ \int_X |e^{u_j} - e^u| d\mu \leq 2^{-j}, \; \; \forall j\geq 1. $$ 
    For each $k \geq j \geq 1$, consider $v_{j,k}=P_\theta(\min(u_j,..,u_k))$. 
%   By Proposition \ref{prop : stab maxima}, the function $h=(u_j+...+u_k)/k$ belongs to $\mathcal{E}(X,\theta)$. Since $v_{j,k} \geq  P_{\theta}(kh)$ and $P_{\theta}(kh) \in \mathcal{E}(X,\omega)$ by Corollary \ref{cor: P(Au) in full mass class}, we get 
It follows from Lemma \ref{lem: MA of rooftop env 1} and Lemma \ref{lem: MA of rooftop env 2} that $v_{j,k} \in \mathcal{E}(X,\theta)$,  $\theta_{v_{j,k}}^n \leq \mu$, and 
\[
\int_X |e^{v_{j,k}}-e^u| \theta_{v_{j,k}}^n \leq 2^{-j+1}. 
\]
By Lemma \ref{lem: prep 1}, $\inf_{j,k}\int_X e^u \theta_{v_{j,k}}^n>0$, hence $\int_X e^{v_{j,k}}\theta_{v_{j,k}}^n$ does not converge to $0$ as $k\to +\infty$. Since $\int_X\theta_{v_{j,k}}^n\leq v_+(\theta)$, we infer that $\sup_X v_{j,k}$ does not converge to $-\infty$ as $k\to +\infty$. 
    Note also that $(v_{j,k})_k$ is a decreasing sequence, hence  $v_j := P_\theta(\inf_{k\geq j} u_k) \in \PSH(X,\theta)$.  Lemma \ref{lem: prep 2} then gives $v_j\in \mathcal{E}(X,\theta)$. 
   Hence, we get by the monotone convergence theorem that  
    $$ \lim_{k \rightarrow +\infty} \int_X |e^{v_{j,k}} - e^{v_j}|\theta_{v_{j,k}}^n \leq \lim_{k \rightarrow +\infty} \int_X |e^{v_{j,k}} - e^{v_j}|d\mu = 0.  $$   
 
    %  According to the last lemma we have $v_j \in \mathcal{E}(X,\theta)$ or $v_j=-\infty$. 
    %  Assume by contradiction that  $v_j=-\infty$.  The previous identity yields 
   % $$ \lim_{k\rightarrow+\infty} \int_X e^{v_{j,k}} \theta_{v_{j,k}}^n = 0. $$
   % On the other hand, we have by Lemma \ref{lem: MA of rooftop env 2} that 
   % $$ \int_X |e^{v_{j,k}} - e^u| \theta_{v_{j,k}}^n \leq \sum_{l=j}^k \int_{X} |e^{u_l} - e^u| d\mu \leq 2^{-j+1}.  $$
 %   Hence, for $k\rightarrow +\infty$, we obtain 
 %   $$ \lim_{k\rightarrow +\infty} \int_X  e^u \theta_{v_{j,k}}^n = \lim_{k\rightarrow +\infty} \int_X |e^{v_{j,k}} - e^u|\theta_{v_{j,k}}^n \leq 2^{-j+1}. $$
%Hence, 
%$$ \inf_{j,k} \int_X  e^u \theta_{v_{j,k}}^n = 0. $$
%This is impossible because of Lemma \ref{lem: prep 1}.
%Thus, we infer that 

%    Fixing $C>0$, we have 
%   $$ v_-(\theta) \leq \int_X \theta_{v_{j,k}}^n \leq \mu(\{u\leq - C\}) + e^C\int_X e^u \theta_{v_{j,k}}^n.  $$
%    Letting $k \rightarrow +\infty$ yields
%    $$  v_-(\theta) \leq \mu(\{u\leq - C\}) + 2^{-j+1}e^C. $$
%    Letting $j \rightarrow +\infty$ and then $C \rightarrow +\infty$ implies $v_-(\theta)=0$.  This contradicts Corollary \ref{cor: v- positive}.
%Therefore, we infer that $v_j \in \mathcal{E}(X,\theta)$. 

    By construction, the sequence $(v_j)$ is non-decreasing; set $v:= \lim_jv_j$. We then  have $v\in \mathcal{E}(X,\theta)$ by Proposition \ref{prop : stab maxima}. It thus follows from \cite[By Proposition 4.25]{GZ07} that  $v_j$ converges to $v$ in capacity.
    
    Now, observe that the functions $|e^{v_j} - e^u|$ are uniformly bounded, quasi-continuous, and converge in capacity to $|e^v - e^u|$. Therefore, we get by Theorem \ref{thm: continuity cap} that 
    $$ \int_X |e^v-e^u| \theta_v^n = \lim_{j\rightarrow +\infty} \int_X |e^{v_j}-e^u| \theta_{v_j}^n. $$ 
    Similarly, we have that 
     $$ \int_X |e^{v_j}-e^u| \theta_{v_j}^n = \lim_{j\rightarrow +\infty} \int_X |e^{v_{j,k}}-e^u| \theta_{v_{j,k}}^n =0,$$ 
    hence
    $$ \int_X |e^u - e^v| \theta_v^n = 0. $$ 
    The domination principle then gives $v\geq u$. By construction of $v$, we have $u_j\leq v_j$, hence $u\leq v$, and thus $u=v$.

    Finally, since $v_j \leq u_j\leq \max(u_j,u)$ for every $j$, and  $(v_j)$, $(\max(u_j,u))$ converge in capacity to $u$, we infer that $u_j \rightarrow u$ in capacity.      
    \end{proof} 
Now we proceed to the proof of Theorem \ref{thm cvg cap unbounded}. 
    \begin{proof}[Proof of Theorem \ref{thm cvg cap unbounded}]
    Assume in the contrary that $(u_j)$ does not converge in capacity to $u$. That implies the existence of positive constants $\varepsilon$ and  $\delta$, and a subsequence $(v_j) \subset (u_j)$  such that
 $$       \capa_\omega(\{|v_j - u|>\delta\}) \geq \varepsilon. $$
   We obtain a contradiction by applying the last lemma to the sequence $(v_j)$. Hence $u_j \rightarrow u$ in capacity, and the proof is complete. 
\end{proof}

\section{Weak solutions to Monge-Ampère equations}\label{sect: solutions to MA-E}

The main goal of this section is to study the existence of weak solutions to the equation 
\begin{equation}\label{MA-eqt}
     \theta_u^n = ce^{\lambda u}\mu, \; (u,c) \in \mathcal{E}(X,\theta) \times \mathbb{R}^*_+.   
     \end{equation}

Here $\lambda \in \mathbb{R}^+$ is a fixed constant, and $\mu$ is a positive Radon measure vanishing on pluripolar sets.

First, we start by proving that the measure $\mu$ is absolutely continuous with respect to the Monge-Ampère measure of a bounded $\omega$-psh function. 
\begin{theorem}\label{thm 3.1}
    There is $\phi \in \PSH(X,\omega) \cap L^\infty(X)$ and $f \in L^1((\omega+dd^c \phi)^n)$ such that  $$ \mu = f (\omega + dd^c \phi)^n. $$
\end{theorem}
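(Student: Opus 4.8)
The plan is to reduce the statement to a single absolute-continuity assertion and then invoke the Radon--Nikodym theorem. Precisely, it suffices to produce one \emph{bounded} $\omega$-psh function $\phi$ for which $\mu$ is absolutely continuous with respect to $\nu:=(\omega+dd^c\phi)^n$. Granting this, the Radon--Nikodym derivative $f:=d\mu/d\nu\ge 0$ is measurable and satisfies $\int_X f\,d\nu=\mu(X)<+\infty$ (the mass is finite because $X$ is compact), so $f\in L^1(\nu)$ and $\mu=f(\omega+dd^c\phi)^n$, which is the desired conclusion. Thus the entire content lies in finding a bounded $\phi$ with $\mu\ll(\omega+dd^c\phi)^n$.

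To build such a $\phi$ I use a gluing argument in the spirit of Cegrell \cite{Ceg98} and Kolodziej--Nguyen \cite{KN22}. The problem is reduced to the following domination statement: for every $\varepsilon>0$ there exist a bounded $\omega$-psh function $\psi$ with $\sup_X\psi=0$ and a Borel set $K\subset X$ with $\mu(X\setminus K)<\varepsilon$ such that
$$ \mathbf{1}_{K}\,\mu\le (\omega+dd^c\psi)^n. $$
Applying this with $\varepsilon=2^{-j}$ yields functions $\psi_j$ (with $\sup_X\psi_j=0$) and sets $K_j$. I then choose weights $c_j\in(0,1)$ with $\sum_{j\ge1}c_j=1$ and $\sum_{j\ge1}c_j\,\|\psi_j\|_{L^\infty(X)}<+\infty$, and set $\phi:=\sum_{j\ge1}c_j\psi_j$. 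The partial sums are $\omega$-psh and decrease to $\phi$, which is therefore bounded and $\omega$-psh, and as positive currents
$$ \omega+dd^c\phi=\sum_{j\ge1}c_j(\omega+dd^c\psi_j)=c_j(\omega+dd^c\psi_j)+R_j, $$
where $R_j:=\sum_{k\neq j}c_k(\omega+dd^c\psi_k)=(1-c_j)\omega+dd^c\!\big(\sum_{k\neq j}c_k\psi_k\big)$ is a positive current with bounded potential. Expanding the Bedford--Taylor products and keeping a single term gives $(\omega+dd^c\phi)^n\ge c_j^{\,n}(\omega+dd^c\psi_j)^n\ge c_j^{\,n}\,\mathbf{1}_{K_j}\mu$ for every $j$.

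From this, absolute continuity follows at once. If $E$ is Borel with $(\omega+dd^c\phi)^n(E)=0$, then $\mathbf{1}_{K_j}\mu(E)\le c_j^{-n}(\omega+dd^c\phi)^n(E)=0$ for each $j$, hence
$$ \mu(E)=\mu(E\cap K_j)+\mu(E\setminus K_j)\le\mu(X\setminus K_j)<2^{-j}\xrightarrow[j\to+\infty]{}0, $$
so $\mu(E)=0$ and $\mu\ll(\omega+dd^c\phi)^n$. The two structural facts used here---the current identity for the infinite convex combination, and the inequality $(S+T)^n\ge S^n$ for positive $(1,1)$-currents with bounded potentials---are purely local Bedford--Taylor statements, so the fact that $\omega$ is not closed plays no role.

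The genuine obstacle is the domination statement itself, which is the technical heart of the argument and where I would follow (or simply quote) Cegrell \cite{Ceg98} and Kolodziej--Nguyen \cite{KN22}. When $\mu=g\,\omega^n$ is absolutely continuous it is immediate: take $K=\{g\le M\}$ and $\psi$ with $(\omega+dd^c\psi)^n\sim M\,\omega^n$, so that $\mu(X\setminus K)\to0$ as $M\to+\infty$. The difficulty is the singular case, since a general finite measure vanishing on pluripolar sets need not be globally dominated by \emph{any} bounded-potential Monge--Amp\`ere measure. The idea is to trade a small amount of mass for boundedness of the potential: using inner regularity one captures all but $\varepsilon$ of the mass on a compact $K$, disintegrates $\mathbf{1}_K\mu$ so that its pieces obey a Kolodziej-type capacity bound, and then solves (or compares with the solution of) the corresponding Monge--Amp\`ere equation for bounded $\omega$-psh functions. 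It is precisely here that the Guan--Li condition is essential, as it guarantees the comparison principle and the mass normalisation $v_-(\omega)=v_+(\omega)=\int_X\omega^n$ on which the bounded theory rests; subtracting a constant finally normalises $\psi$ so that $\sup_X\psi=0$.
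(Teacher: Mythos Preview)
Your reduction to an absolute-continuity statement and the convex-combination gluing are fine, but the argument has a real gap: the ``domination statement'' is precisely the content of the theorem, and you do not prove it. You say you would ``follow (or simply quote)'' \cite{Ceg98} and \cite{KN22}, but neither reference provides a global statement of the form $\mathbf{1}_K\mu\le(\omega+dd^c\psi)^n$ on a compact Hermitian manifold for an arbitrary non-pluripolar $\mu$. Your sketch for the singular case---capturing mass on a compact $K$, disintegrating so that the pieces satisfy a capacity bound, then \emph{solving} a Monge--Amp\`ere equation---is circular in this paper: solving $(\omega+dd^c\psi)^n=\nu$ for bounded $\psi$ when $\nu$ is merely non-pluripolar is exactly what Theorem~\ref{thm 3.1} is a stepping stone towards (cf.\ Theorem~\ref{main: solutions}). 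Without an independent source for the domination statement, the proof is incomplete.

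You also misdiagnose the role of the Guan--Li condition: it is \emph{not} needed for this theorem. The paper's proof is purely local and elementary. One covers $X$ by finitely many charts $U_j\Subset V_j$ with $V_j$ biholomorphic to the unit ball, invokes Cegrell's local result \cite[Theorem 5.11]{Ceg04} to obtain on each $V_j$ a bounded psh function $u_j$ with $\mu\ll(dd^cu_j)^n$, extends $u_j$ to a global bounded $\omega$-psh function $v_j$ by a max-construction with a smooth cutoff, and then sets $v=N^{-1}\sum v_j$. On each $U_j$ one has $(\omega+dd^cv)^n\ge N^{-n}(\omega+dd^cv_j)^n$, hence $\mu\ll(\omega+dd^cv)^n$ everywhere. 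This avoids both your infinite sum and your unproved domination statement, and it makes clear that the comparison principle and the mass normalisation play no role here.
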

\begin{proof}
Fix open sets $U\Subset V\subset X$ such that $V$ is biholomorphic to the unit ball in $\mathbb{C}^n$ and $U$ is biholomorphic to the ball of radius $1/2$. By \cite[Theorem 5.11]{Ceg04}, there exists $u\in \PSH(V)$, $-1\leq u\leq 0$, such that $\mu$ is absolutely continuous with respect to $(dd^c u)^n$.  Let $\rho$ be a smooth function such that $\rho=-1$ in $U$ and $\rho=0$ in a neighborhood of $X\setminus V$. The function $v:=\max(u,\rho)$ is $A\omega$-psh in $U$ which coincides with $u$ in $U$ and $\rho$ near $\partial V$. It is thus $A\omega$-psh on $X$, hence $A^{-1}v$ is $\omega$-psh on $X$ and on $U$, $\mu$ is absolutely continuous with respect to $(\omega+dd^c v)^n$.  

Consider now a double finite covering of $X$ by  $U_j \Subset V_j$, $j=1,...,N$. Arguing as above, we obtain bounded $\omega$-psh functions $v_1,...,v_N$ such that on $U_j$ $\mu$ is absolutely continuous with respect to $(\omega+dd^c v_j)^n$, hence also with respect to $(\omega+dd^c v)^n$, where 
\[
v:= N^{-1}(v_1+...+v_N). 
\]
Since the open sets $U_j$ cover $X$, the result follows. 

\end{proof}

\begin{theorem}\label{thm 3.2}
    Fix $\lambda>0$ and let $\mu$ be a positive Radon measure vanishing on pluripolar sets. Then there is a unique $u \in \mathcal{E}(X,\theta)$ such that 
    $$ (\theta + dd^c u)^n = e^{\lambda u}\mu. $$
\end{theorem}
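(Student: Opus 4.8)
The plan is to get uniqueness for free from the domination principle and to obtain existence by approximation, reducing to the bounded theory and then invoking the continuity results of Section \ref{sect: cont of MA} to pass to the limit. Uniqueness is immediate: if $u_1,u_2\in\mathcal{E}(X,\theta)$ both solve the equation, then $e^{-\lambda u_1}\theta_{u_1}^n=\mu=e^{-\lambda u_2}\theta_{u_2}^n$, so Corollary \ref{cor: uniqness MA-sol} applies in both directions and forces $u_1=u_2$. For existence, I would first reduce to a regular problem. By Theorem \ref{thm 3.1} write $\mu=f\beta$ with $\beta=(\omega+dd^c\phi)^n$, $\phi\in\PSH(X,\omega)\cap L^\infty(X)$ and $0\le f\in L^1(\beta)$. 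Truncating the density, set $f_j=\min(f,j)$ and $\mu_j=f_j\beta$, so that $\mu_j\nearrow\mu$ with $\mu_j\le\mu$ and $\mu_j(X)\to\mu(X)$; moreover each $\mu_j\le j\beta$ is dominated by capacity because $\phi$ is bounded, so $\mu_j$ falls within the scope of the bounded Hermitian theory (\cite{KN15Phong,GL22,GL23Crelle,KN23CVPDE}), which provides a bounded solution $u_j\in\PSH(X,\theta)\cap L^\infty(X)$ of $\theta_{u_j}^n=e^{\lambda u_j}\mu_j$. Being bounded, $u_j\in\mathcal{E}(X,\theta)$ automatically.

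The technical heart is a uniform two-sided bound on $M_j:=\sup_X u_j$. Integrating the equation gives
$$ v_-(\theta)\le\int_X\theta_{u_j}^n=\int_X e^{\lambda u_j}\,d\mu_j\le v_+(\theta). $$
If $M_j\to-\infty$ along a subsequence, then $e^{\lambda u_j}\le e^{\lambda M_j}\to0$ uniformly and $\mu_j(X)\le\mu(X)<+\infty$ force $\int_X\theta_{u_j}^n\to0$, contradicting $v_-(\theta)>0$ from Corollary \ref{cor: v- positive}. If instead $M_j\to+\infty$, I normalize $w_j=u_j-M_j$; after extracting an $L^1$-convergent subsequence $w_j\to w\in\PSH(X,\theta)$ with $\sup_X w=0$, the functions $\lambda w_j$ are quasi-psh, uniformly bounded above, and converge in $L^1$, so by \cite[Lemma 11.5]{GZ17} and \cite[Corollary 2.2]{KN22} one has $e^{\lambda w_j}\to e^{\lambda w}$ in $L^1(\mu_1)$, whence $\int_X e^{\lambda w_j}\,d\mu_1\to\int_X e^{\lambda w}\,d\mu_1>0$ (as $\mu_1$ charges no pluripolar set and $w>-\infty$ $\mu_1$-a.e.). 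But $\mu_1\le\mu_j$ gives $e^{\lambda M_j}\int_X e^{\lambda w_j}\,d\mu_1\le\int_X e^{\lambda u_j}\,d\mu_j\le v_+(\theta)$, so $\int_X e^{\lambda w_j}\,d\mu_1\to0$, a contradiction. Hence $|M_j|\le C$ for some $C$.

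With $\sup_X u_j$ uniformly bounded, the family $(u_j)$ is relatively compact in $L^1(X,\omega^n)$, and I extract $u_j\to u\in\PSH(X,\theta)$. Since $\theta_{u_j}^n=e^{\lambda u_j}\mu_j\le e^{\lambda C}\mu$ for the fixed non-pluripolar measure $\mu$, Theorem \ref{thm cvg cap unbounded} yields $u\in\mathcal{E}(X,\theta)$ and $u_j\to u$ in capacity. It then remains to pass to the limit in each side of the equation. On the left, Theorem \ref{thm: continuity cap} gives $\theta_{u_j}^n\rightharpoonup\theta_u^n$. On the right, $e^{\lambda u_j}\to e^{\lambda u}$ in $L^1(\mu)$ (by the same exponential-convergence lemmas, or from capacity convergence together with the upper bound) while $\mu_j\to\mu$ in total variation; testing against a continuous $\chi$ and splitting the error as $\int\chi e^{\lambda u_j}(d\mu_j-d\mu)$, bounded by $\|\chi\|_\infty e^{\lambda C}(\mu-\mu_j)(X)\to0$, plus $\int\chi(e^{\lambda u_j}-e^{\lambda u})\,d\mu\to0$, shows $e^{\lambda u_j}\mu_j\rightharpoonup e^{\lambda u}\mu$. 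Equating the two limits gives $\theta_u^n=e^{\lambda u}\mu$, completing the existence proof.

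The main obstacle is the a priori upper bound $M_j\le C$: unlike the lower bound, it does not follow from the mass estimate alone and relies on the compactness of normalized $\theta$-psh functions together with the fact that $\mu$ assigns positive mass to $\{w>-\infty\}$, the exponential-convergence lemmas of \cite{GZ17,KN22} being exactly what make $w_j\mapsto\int_X e^{\lambda w_j}\,d\mu_1$ well behaved under $L^1$-limits. A secondary point to verify carefully is that the approximating measures $\mu_j$ remain dominated by a single fixed non-pluripolar measure (here $e^{\lambda C}\mu$), which is precisely what licenses the application of Theorem \ref{thm cvg cap unbounded}.
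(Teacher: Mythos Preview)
Your proof is correct and follows the paper's overall scheme: write $\mu=f(\omega+dd^c\phi)^n$, approximate, solve the bounded problem, and pass to the limit via Theorems~\ref{thm cvg cap unbounded} and~\ref{thm: continuity cap}, with uniqueness from Corollary~\ref{cor: uniqness MA-sol}. Two differences are worth noting. First, the paper inserts an intermediate Step~1 in which, for bounded $f$, the potential $\phi$ is regularized by smooth $\phi_j\searrow\phi$ so that \cite[Theorem~3.4]{GL23Crelle} applies on the nose; you invoke the bounded Hermitian theory directly for $\mu_j\le j(\omega+dd^c\phi)^n$ with $\phi$ merely bounded, which is a shortcut relying on a slightly stronger input from the cited references. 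Second, your compactness argument for the upper bound $M_j\le C$ is correct but unnecessary: since $f_j=\min(f,j)$ is increasing in $j$, Corollary~\ref{cor: uniqness MA-sol} forces $u_{j+1}\le u_j$, so the sequence is \emph{decreasing} and $\sup_X u_j\le\sup_X u_1$ for free; this monotonicity is what the paper uses, and it also removes the need to extract an $L^1$-convergent subsequence. Your route trades this simplification for a self-contained a~priori bound, which is a reasonable exchange.
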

%The proof of the last theorem relies on the following lemma. 
%\begin{prop}\label{prop 3.6} 
 %   Let $\mu_j$, $\mu$ be positive Radon measures vanishing on pluripolar sets. If $\mu_j \rightarrow \mu$ weakly, then 
 %   $$ \liminf_{j \rightarrow +\infty} \int_X e^{\lambda u} d\mu_j >0, $$
 %   for every $u \in \PSH(X,\omega)$ and every $\lambda \geq 0$.
%\end{prop}
%\begin{proof}
 %   Assume in the contrary that 
 %   $$ \lim_{j\rightarrow +\infty}  \int_X e^{\lambda u} d\mu_j = 0. $$
 %   Fix $k \in \mathbb{N}$. We have by \cite[Lemma 2.4]{DDL23} that %   \begin{align*}
 %       \mu(\{u > -k\}) &\leq \liminf_{j\rightarrow +\infty} \mu_j(\{u > -k\}) \\
 %       &\leq e^{\lambda k} \liminf_{j\rightarrow +\infty} \int_X e^{\lambda u} d\mu_j \\
 %       &= 0.  
 %   \end{align*}
 %   Hence 
 %   $$ \mu(X) = \lim_{k \rightarrow +\infty} \mu(\{u>-k\}) = 0, $$
 %   a contradiction. 
%\end{proof}
%Now we move on to the proof of Theorem \ref{thm 3.2}
\begin{proof}
By the last theorem, one can write $\mu = f(\omega + dd^c \phi)^n$, where $\phi \in \PSH(X,\omega)\cap L^\infty(X)$ and $f\in L^1((\omega + dd^c \phi)^n)$.
The proof will be devided in two steps.

    {\bf Step 1.} We first assume $f \in L^\infty(X)$. Let $\phi_j \in \PSH(X,\omega)\cap \mathcal{C}^\infty(X)$, $\phi_j \searrow \phi$. According to \cite[Theorem 3.4]{GL23Crelle}, for every $j\geq 1$, there is $u_j \in \PSH(X,\theta)\cap L^\infty(X)$ such that 
    $$ \theta_{u_j}^n = e^{\lambda u_j} f (\omega + dd^c \phi_j)^n. $$
  Since $f$, $(\phi_j)$ are uniformly bounded, we get 
  by  Corollary \ref{cor: uniqness MA-sol} that there is $C>0$ such that $\phi_j - C \leq u_j \leq \phi_j + C$. Hence, the sequence $(u_j)$ is uniformly bounded.  Up to extracting a subsequence, one can assume $u_j \rightarrow u \in \PSH(X,\theta)\cap L^\infty$ in $L^1(X)$ and almost everywhere. Since $f$ is bounded, we get 
   $$ (\theta + dd^c u_j)^n \leq A (\omega + dd^c \phi_j)^n, $$
   for some positive constant $A$. Hence, we obtain by \cite[Lemma 2.3]{KN22} that 
   $$ \lim_{j\rightarrow +\infty} \int_X |u_j - u| (\theta + dd^c u_j)^n = 0.  $$
   Theorem \ref{thm: cv cap bounded} implies that $u_j \rightarrow u$ in capacity, and hence $\theta_{u_j}^n \rightarrow \theta_u^n$ weakly by Theorem \ref{thm: continuity cap}. Moreover, we have  
  $$ e^{\lambda u_j} f \omega_{\phi_j}^n \rightarrow e^{\lambda u} f \omega_\phi^n, $$
  weakly according to \cite[Lemma 2.5]{DDL23}, which implies $\theta_u^n = e^{\lambda u} f \omega_\phi^n$.

%    {\bf Step 2.} We assume $f \in L^\infty(X)$. Let $f_j \in \mathcal{C}^{\infty}(X)$, $f_j \rightarrow f$ in $L^1(\omega_\phi^n)$. By Step 1, there is $u_j \in \PSH(X,\omega)\cap L^\infty(X)$ such that 
%    $$ (\omega + dd^c u_j)^n = e^{\lambda u_j} f_j (\omega + dd^c \phi)^n. $$
%   Since $\phi$, $f_j$ are uniformly bounded, we infer from the domination principle   that the sequence $(u_j)$ is uniformly bounded on $X$ and that  $\omega_{u_j}^n \leq C \omega_\phi^n$. 
%   Up to extracting a subsequence, one can assume $u_j \rightarrow u$ in $L^1(X)$ and almost everywhere. Theorem \ref{thm: cv cap bounded} implies $u_j \rightarrow u$ in capacity. Therefore $\omega_{u_j}^n \rightarrow \omega_{u}^n$ according to Theorem \ref{thm: continuity cap}. We get by the Lebesgue convergence theorem that 
%   $$ \omega_{u_j}^n = e^{\lambda u_j} f_j \omega_{\phi_j}^n \rightarrow e^{\lambda u} \mu, $$
%   strongly. Therefore, we obtain $\omega_u^n = e^{\lambda u}\mu$.  

    {\bf Step 2.} We move on the general case when $\phi \in \PSH(X,\omega) \cap L^\infty$, $f \in L^1(\omega_\phi^n)$.
  By Step 1,  for each $j \in \mathbb{N}^*$,  there is $u_j \in \PSH(X,\theta) \cap L^\infty$ such that 
    $$ (\theta + dd^c u_j)^n = e^{\lambda u_j} \min(f,j) (\omega + dd^c \phi)^n. $$
    The sequence $(u_j)$ is decreasing by Corollary \ref{cor: uniqness MA-sol}; set $u = \lim u_j$. We prove that $u \in \mathcal{E}(X,\theta)$. 
    Notice that, if $sup_X u_j\to -\infty$ then 
    \[
   v_-(\theta) \leq  \int_X \theta_{u_j}^n \leq \int_X e^{\lambda \sup_X u_j} d\mu \to 0,
    \]
    contradicting Corollary \ref{cor: v- positive}. 
%    Assume by contradiction that  $u\equiv -\infty$. Fixing $\alpha>0$, there is $j_0$ such that $v_j \leq -\alpha$ for every $j \geq j_0$. 
 %   Notice that, we have  
  %   $$ \int_X -u_j \theta_{u_j}^n \leq \int_X -u_j e^{\lambda u_j} d\mu \leq \mu(X)/\lambda, \; \; \forall j \geq j_0. $$
 %      That implies 
 %  $$ \frac{1}{\lambda}\mu(X) \geq \int_X -u_j \theta_{u_j}^n \geq \alpha \int_X \theta_{u_j}^n \geq \alpha v_-(\theta).  $$
  % This is impossible because $v_-(\theta)>0$ by Proposition \ref{cor: v- positive}.
  We conclude that $u \in \PSH(X,\theta)$.  Since $\theta_{u_j}^n \leq e^{\lambda u_1} d\mu$ for every $j\geq 1$, we get by Theorem \ref{thm cvg cap unbounded} that $u \in \mathcal{E}(X,\theta)$, and therefore  $\theta_{u_j}^n \rightarrow \theta_u^n$ weakly according to Theorem \ref{thm: continuity cap}. Furthermore, we obtain by the Lebesgue convergence theorem that 
   $$ \theta_{u_j}^n =  e^{\lambda u_j} \min(f,j) (\omega + dd^c \phi)^n \rightarrow e^{\lambda u} d\mu,   $$
   in the strong sense of measures. That implies $\theta_u^n = e^{\lambda u} d\mu$. The uniqueness of the solution follows from Corollary \ref{cor: uniqness MA-sol}.

\end{proof}

\begin{corollary}
 Let $\mu$ be a positive Radon vanishing on pluripolar sets. Then, there is a unique $c>0$ and a function $u \in \mathcal{E}(X,\theta)$ such that 
    $$ \theta_u^n = c d\mu. $$    
\end{corollary}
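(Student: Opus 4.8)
The plan is to produce the pair $(c,u)$ as the limit, as the exponential coefficient tends to $0$, of the solutions furnished by Theorem \ref{thm 3.2}, and to extract uniqueness of $c$ directly from the domination principle. Uniqueness of $c$ is immediate: if $u_1,u_2\in\mathcal{E}(X,\theta)$ satisfy $\theta_{u_1}^n=c_1\mu$ and $\theta_{u_2}^n=c_2\mu$ with $c_1,c_2>0$, then $\theta_{u_1}^n=(c_1/c_2)\theta_{u_2}^n$ on $X$, so Corollary \ref{cor: uniq cst c} forces $c_1/c_2\geq 1$; exchanging the roles of $u_1$ and $u_2$ gives $c_2/c_1\geq 1$, whence $c_1=c_2$. (The potential $u$ is only determined up to an additive constant, which is why the statement asserts uniqueness of $c$ alone.) For existence, I fix a sequence $\lambda_k\searrow 0$ of positive reals, obtain from Theorem \ref{thm 3.2} functions $\tilde u_k\in\mathcal{E}(X,\theta)$ with $\theta_{\tilde u_k}^n=e^{\lambda_k\tilde u_k}\mu$, and normalize by setting $w_k=\tilde u_k-\sup_X\tilde u_k$ and $c_k=e^{\lambda_k\sup_X\tilde u_k}$, so that $\sup_X w_k=0$ and
$$ \theta_{w_k}^n=c_k\,e^{\lambda_k w_k}\mu. $$

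The crux is to bound $c_k$ between two positive constants independent of $k$. Integrating and using $w_k\leq 0$ gives $v_-(\theta)\leq\int_X\theta_{w_k}^n=c_k\int_X e^{\lambda_k w_k}\,d\mu\leq c_k\mu(X)$, so Corollary \ref{cor: v- positive} yields the lower bound $c_k\geq v_-(\theta)/\mu(X)>0$. For the upper bound, since $c_k\int_X e^{\lambda_k w_k}\,d\mu=\int_X\theta_{w_k}^n\leq v_+(\theta)$, it suffices to show that $\int_X e^{\lambda_k w_k}\,d\mu$ stays bounded away from $0$.

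The hard part is precisely this uniform positivity, and I expect it to be the main obstacle. The functions $w_k$ form a family of $\omega$-psh functions with $\sup_X w_k=0$, so their masses $\int_X(-w_k)\,\omega^n$ are uniformly bounded and $\capa_\omega(\{w_k<-s\})\leq C/s$ uniformly in $k$. Writing $\mu=f\,\omega_\phi^n$ with $\phi$ bounded and $f\in L^1(\omega_\phi^n)$ as in Theorem \ref{thm 3.1}, and using that the bounded Monge-Amp\`ere measure $\omega_\phi^n$ is dominated by $\capa_\omega$, I get $\omega_\phi^n(\{w_k<-s\})\to 0$ uniformly in $k$ as $s\to+\infty$; the uniform integrability of the single function $f$ then upgrades this to $\mu(\{w_k<-s\})\to 0$ uniformly in $k$. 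Consequently
$$ \int_X e^{\lambda_k w_k}\,d\mu\geq e^{-\lambda_k s}\bigl(\mu(X)-\mu(\{w_k<-s\})\bigr), $$
and letting $k\to+\infty$ (so $\lambda_k\to 0$) for a fixed large $s$ gives $\liminf_k\int_X e^{\lambda_k w_k}\,d\mu\geq\mu(X)>0$. This non-concentration of $\mu$ along the sublevel sets of a normalized family is exactly the mechanism underlying \cite[Lemma 11.5]{GZ17} and \cite[Corollary 2.2]{KN22}, which are invoked repeatedly above.

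Once $c_k$ is squeezed between two positive constants, the passage to the limit is routine. I pass to a subsequence along which $c_k\to c\in(0,+\infty)$ and $w_k\to u$ in $L^1(X)$. Since $\theta_{w_k}^n=c_k e^{\lambda_k w_k}\mu\leq(\sup_k c_k)\mu$ is dominated by the fixed non-pluripolar measure $(\sup_k c_k)\mu$, Theorem \ref{thm cvg cap unbounded} gives $u\in\mathcal{E}(X,\theta)$ and $w_k\to u$ in capacity, hence $\theta_{w_k}^n\to\theta_u^n$ weakly by Theorem \ref{thm: continuity cap}. On the other hand, convergence in capacity gives $w_k\to u$ quasi-everywhere, hence $\mu$-almost everywhere (as $\mu$ vanishes on pluripolar sets and $u>-\infty$ $\mu$-a.e.), so $e^{\lambda_k w_k}\to 1$ $\mu$-a.e.; being bounded by $1\in L^1(\mu)$, dominated convergence together with $c_k\to c$ yields $c_k e^{\lambda_k w_k}\mu\to c\mu$ weakly. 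Comparing the two weak limits gives $\theta_u^n=c\mu$, which completes the proof.
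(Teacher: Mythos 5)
Your proposal is correct and follows essentially the same route as the paper: solve $\theta_{u}^n=e^{\lambda u}\mu$ via Theorem \ref{thm 3.2} for a sequence of parameters tending to $0$, normalize by the sup, squeeze the resulting constants between $v_-(\theta)$ and $v_+(\theta)$, pass to the limit using Theorem \ref{thm cvg cap unbounded} and Theorem \ref{thm: continuity cap}, and deduce uniqueness of $c$ from Corollary \ref{cor: uniq cst c}. The only cosmetic difference is that where you keep $\int_X e^{\lambda_k w_k}\,d\mu$ away from $0$ by a hands-on capacity/uniform-integrability estimate on the sublevel sets, the paper first extracts an $L^1$-convergent subsequence of the normalized solutions and then invokes \cite[Lemma 11.5]{GZ17} and \cite[Corollary 2.2]{KN22} to get $e^{v_j/j}\,d\mu\to\mu$ strongly, which yields the same bound.
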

\begin{proof}
    By the last theorem, for every $j\geq 1$, there is  $u_j \in \mathcal{E}(X,\theta)$ such that
    $$ \theta_{u_j}^n = e^{u_j/j} d\mu. $$
    Setting $v_j= u_j - \sup_X u_j$, one can write 
    $$ \theta_{v_j}^n = c_j e^{v_j/j} d\mu, $$
    for some positive constant $c_j$.
  Up to extracting a subsequence one can assume $v_j \rightarrow v \in \PSH(X,\theta)$ in $L^1(X)$ and almost everywhere. 
  It thus follows from \cite[Lemma 11.5]{GZ17} and \cite[Corollary 2.2]{KN22} that $e^{v_j/j} d\mu \rightarrow \mu$ strongly. 
   Since for every $j$,
  $$ 0 < v_-(\theta) \leq   \int_X \theta_{v_j}^n = c_j \int_X e^{v_j/j} d\mu \leq v_+(\theta)< +\infty, $$
    we infer that $(c_j)$ is uniformly bounded. Therefore, up to extracting a subsequence, one can suppose $c_j \rightarrow c>0$. Hence, we get
  $$ \theta_{v_j}^n = c_j e^{v_j/j} \mu \rightarrow c\mu, $$
   as $j\rightarrow +\infty$ strongly. On the other hand, 
    we have $\theta_{v_j}^n \leq c_j \mu \leq C\mu$ for some positive constant $C$. 
   Therefore, it follows from Theorem \ref{thm cvg cap unbounded} that  $v \in \mathcal{E}(X,\theta)$ and $v_j \rightarrow v$ in capacity. Hence we infer $\theta_{v_j}^n \rightarrow \theta_v^n$ weakly by Theorem \ref{thm: continuity cap},
   which implies $\theta_v^n = cd\mu$. The uniqueness of the constant $c$ follows from Corollary \ref{cor: uniq cst c}. 
\end{proof}

%\bibliographystyle{amsalpha_nodash}
%\bibliography{Biblio}
\newcommand{\etalchar}[1]{$^{#1}$}
\providecommand{\bysame}{\leavevmode\hbox to3em{\hrulefill}\thinspace}
\providecommand{\MR}{\relax\ifhmode\unskip\space\fi MR }
% \MRhref is called by the amsart/book/proc definition of \MR.
\providecommand{\MRhref}[2]{%
  \href{http://www.ams.org/mathscinet-getitem?mr=#1}{#2}
}
\providecommand{\href}[2]{#2}

\end{document}